\documentclass[11pt]{amsart}

\usepackage{amscd}
\usepackage[all, knot]{xy}
\usepackage{extpfeil}
\usepackage[top=1in, bottom=1in, left=1in, right=1in]{geometry}
\xyoption{all}
\xyoption{arc}
\usepackage{hyperref}
\geometry{letterpaper}                   
\usepackage{graphicx}
\usepackage{amssymb}
\usepackage{epstopdf}
\usepackage{amsmath}
\setcounter{tocdepth}{2}


\def\a{\frak{a}}
\def\b{\frak{b}}
\def\m{\frak{m}}

\def\p{\frak{p}}
\def\q{\frak{q}}

\def\C{\bC}
\def\E{\bE}

\newcommand{\bE}{\mathbb{E}}

\newcommand{\I}{\mathbb{I}}

\newcommand{\Z}{\mathbb{Z}}


\def\Gstab{\Gamma^{\operatorname{stab}}}
\def\height{\operatorname{ht}}

\def\depth{\operatorname{depth}}

\def\im{\operatorname{im}}

\def\coker{\operatorname{coker}}
\def\ker{\operatorname{ker}}
\def\id{\operatorname{id}}
\def\Gid{\operatorname{Gid}}
\def\pd{\operatorname{pd}}
\def\cd{\operatorname{cd}}
\def\id{\operatorname{id}}
\def\cpr{\operatorname{cpr}}
\def\cir{\operatorname{cir}}
\def\CIR{\operatorname{CIR}}
\def\CPR{\operatorname{CPR}}
\def\prj{\operatorname{prj}}

\def\syz{\Omega^{\operatorname{prj}}}
\def\cosyz{\Omega_{\operatorname{inj}}}
\def\stabsyz{\Omega^{\operatorname{cpr}}}
\def\stabcosyz{\Omega_{\operatorname{cir}}}

\def\dirlim{\varinjlim}

\def\del{\partial}

\def\Spec{\operatorname{Spec}}

\def\Hom{\operatorname{Hom}}

\def\Ext{\operatorname{Ext}}

\def\Kac{\operatorname{K}_{\operatorname{ac}}}

\def\K{\operatorname{K}}
\def\C{\operatorname{C}}

\def\MF{\operatorname{MF}}
\def\IF{\operatorname{IF}}
\def\LF{\operatorname{LF}}
\def\mf{\operatorname{mf}}

\newcommand{\Inj}{\operatorname{Inj}}
\newcommand{\Prj}{\operatorname{Prj}}

\newcommand{\GPrj}{\operatorname{GPrj}}
\newcommand{\GInj}{\operatorname{GInj}}
\newcommand{\sGInj}{\underline{\operatorname{GInj}}}
\newcommand{\sGPrj}{\underline{\operatorname{GPrj}}}
\newcommand{\MCM}{\operatorname{MCM}}
\newcommand{\sMCM}{\underline{\operatorname{MCM}}}
\newcommand{\modR}{\operatorname{mod}R}
\newcommand{\ModR}{\operatorname{Mod}R}

\numberwithin{equation}{section}

\theoremstyle{plain} 
\newtheorem{thm}[equation]{Theorem}

\newtheorem*{propa}{Proposition A}
\newtheorem*{propb}{Proposition B}
\newtheorem*{thmc}{Theorem C}

\newtheorem{cor}[equation]{Corollary}
\newtheorem{lem}[equation]{Lemma}
\newtheorem{prop}[equation]{Proposition}

\theoremstyle{definition}
\newtheorem{defn}[equation]{Definition}
\newtheorem{example}[equation]{Example}

\theoremstyle{remark}
\newtheorem{rem}[equation]{Remark}
\newtheorem{construction}[equation]{Construction}

\newtheorem{notation}[equation]{Notation}

\title{Stable Local Cohomology} 
\author{Peder Thompson}
\address{Department of Mathematics, University of Nebraska, Lincoln, NE 68588-0130, USA}
\email{pthompson4@math.unl.edu}

\date{\today}	

\subjclass[2010]{13D02, 13D45, 13C11}

\thanks{The author was partially supported by U.S. Department of Education grant P00A120068 (GAANN) and by National Science Foundation Award DMS-0966600}

\begin{document}
\maketitle
\begin{abstract}
Let $R$ be a Gorenstein local ring, $\a$ an ideal in $R$, and $M$ an $R$-module.  The local cohomology of $M$ supported at $\a$ can be computed by applying the $\a$-torsion functor to an injective resolution of $M$.  Since $R$ is Gorenstein, $M$ has a complete injective resolution, so it is natural to ask what one gets by applying the $\a$-torsion functor to it.  Following this lead, we define stable local cohomology for modules with complete injective resolutions.  This gives a functor to the stable category of Gorenstein injective modules.  We show that in many ways this behaves like the usual local cohomology functor.  Our main result is that when there is only one non-zero local cohomology module, there is a strong connection between that module and the stable local cohomology module; in fact, the latter gives a Gorenstein injective approximation of the former.
\end{abstract}

\tableofcontents

\section*{Introduction}
Let $R$ be a Gorenstein local ring with Krull dimension $d$, $\a$ an ideal in $R$, and $M$ an $R$-module.  Local cohomology of $M$ supported at $\a$ is computed by considering the $\a$-torsion functor $\Gamma_\a$ applied to an injective resolution of $M$.  In a Gorenstein ring, every module has a complete injective resolution, so it is natural to ask what one obtains by applying $\Gamma_\a$ to the complete injective resolution as opposed to the usual injective resolution.  Applying $\Gamma_\a$ to a complete injective resolution yields an acyclic complex, so taking cohomology yields nothing of interest.  Instead, given an $R$-module $M$ with a complete injective resolution $U$, we define a single module $\Gstab_\a(M)$ as the zeroeth syzygy of $\Gamma_\a(U)$.  In a Gorenstein ring, $\Gstab_\a(-):\ModR\to \sGInj(R)$ defines a functor, where $\sGInj(R)$ is the stable category of Gorenstein injective $R$-modules.

As a motivating example, we turn to maximal Cohen Macaulay (or MCM) modules over a hypersurface; recall that MCM modules correspond to matrix factorizations \cite{Eis80}.  For a local Gorenstein ring $R$, we have an induced triangulated functor $\Gstab_\a(-):\sMCM(R)\to \sGInj(R)$, where $\sMCM(R)$ is the stable category of MCM $R$-modules (see \cite{Buc86}).  Let $S$ be a regular local ring, $f$ a non-zerodivisor, $Q=S/(f)$, and $\m$ the maximal ideal of $Q$.  Then $\Gstab_\a(-):\sMCM(Q)\to \sGInj(Q)$ induces a map $-\otimes_S \Gamma_\a(D):[\mf(S,f)]\to [\IF(S,f)]$, where $D$ is a minimal injective resolution of $S$ and $[\mf(S,f)]$ and $[\IF(S,f)]$ are the homotopy categories of finitely generated matrix factorizations and injective factorizations, respectively.  For a MCM $Q$-module $M$, there exists a corresponding matrix factorization $(\xymatrix{S^r\ar@<.5ex>[r]^A& S^r\ar@<.5ex>[l]^B})$, where $\coker(A)=M$.  Then $\Gstab_\a(M)$ can be computed by considering $(\xymatrix{S^r\ar@<.5ex>[r]^A& S^r\ar@<.5ex>[l]^B})\otimes_S\Gamma_\a(D)$.  When $\a=\m$, this is just $(\xymatrix{E^r\ar@<.5ex>[r]^A& E^r\ar@<.5ex>[l]^B})$, where $E$ is the injective hull of $S/\m$, and thus $\Gstab_\m(M)$ is isomorphic to either $\ker(A:E^r\to E^r)$ or $\ker(B:E^r\to E^r)$ (depending on the parity of $\dim S$) in the stable category $\sGInj(Q)$ (i.e., isomorphic up to direct sums of injective modules). We describe this situation more generally in Proposition \ref{slc_of_mf}.

More generally for any Gorenstein ring $R$, we obtain a nice description of stable local cohomology at the maximal ideal. Classically, $H_\m^d(M)\cong M\otimes_R E_R(R/\m)$ \cite[Exercise 9.7]{24hours}. If we let $\stabsyz_dM$ be the $d$-th shift of $M$ in $\sMCM(R)$, we can give a similar result stably (Proposition \ref{slc_at_m}):
\begin{propa}\label{thma}
Let $(R,\m)$ be a Gorenstein local ring of Krull dimension $d$ and $M\in \sMCM(R)$. Then $\Gstab_\m(M)\simeq \stabsyz_dM\otimes E(R/\m)$, where $\simeq$ represents isomorphism in $\sGInj(R)$.
\end{propa}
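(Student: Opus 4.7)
The plan is to construct an explicit complete injective resolution of $M$ by combining a complete projective resolution of $M$ with the minimal injective resolution of $R$, and then read off $\Gstab_\m(M)$ by applying $\Gamma_\m$ term-by-term. The starting observation is that the minimal injective resolution $I^\bullet\colon 0\to R\to I^0\to\cdots\to I^d\to 0$ of $R$ satisfies $\Gamma_\m(I^j)=0$ for $j<d$ and $\Gamma_\m(I^d)=E(R/\m)$, since $I^j=\bigoplus_{\height\p=j}E(R/\p)$ and $\Gamma_\m E(R/\p)$ vanishes unless $\p=\m$. Hence $\Gamma_\m I^\bullet$ is $E(R/\m)$ concentrated in cohomological degree $d$.

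Let $T^\bullet$ be a complete projective resolution of $M$ by finitely generated free modules, indexed so that $M=\ker(T^0\to T^1)$ and $\stabsyz_d M=\ker(T^{-d}\to T^{-d+1})$. Form the bicomplex $T^\bullet\otimes_R I^\bullet$ --- each entry $T^i\otimes I^j\cong (I^j)^{r_i}$ is injective --- and take its totalization $U=\mathrm{Tot}(T^\bullet\otimes_R I^\bullet)$. A spectral-sequence argument filtered by columns shows $U$ is acyclic: each column $T^i\otimes I^\bullet$ is quasi-isomorphic to $T^i\otimes R=T^i$ (using flatness of $T^i$ and that $R\to I^\bullet$ is a resolution), which collapses the cohomology of $U$ to that of the acyclic complex $T^\bullet$. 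I will then verify that $U$ is, stably, a complete injective resolution of $M$ by showing that the natural inclusion $M\hookrightarrow T^0\otimes I^0\subseteq U^0$ realizes $M$ as the zeroth cocycle modulo injective summands, and by checking the $\Hom(J,-)$-exactness condition for all injective $J$ from the analogous property of $T^\bullet$.

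Next, apply $\Gamma_\m$ to $U$. Since $\Gamma_\m$ commutes with finite direct sums and $\Gamma_\m I^\bullet$ is concentrated in degree $d$, the bicomplex $T^\bullet\otimes\Gamma_\m I^\bullet$ reduces to a single column, and its totalization is the complex whose degree-$n$ term is $T^{n-d}\otimes E(R/\m)$ with differential induced by $d_T$. Its $0$th cocycle is $\ker\bigl(T^{-d}\otimes E(R/\m)\to T^{-d+1}\otimes E(R/\m)\bigr)$; invoking the Tor-vanishing $\Tor_i^R(N,E(R/\m))=0$ for MCM $N$ and $i>0$ --- which follows from the Matlis-adjunction identity $\Hom\bigl(\Tor_i^R(N,E(R/\m)),E(R/\m)\bigr)\cong\Ext^i_R(N,\hat R)$ together with $\Ext^i_R(N,R)=0$ --- this kernel equals $\ker(T^{-d}\to T^{-d+1})\otimes E(R/\m)=\stabsyz_d M\otimes E(R/\m)$, yielding the desired isomorphism in $\sGInj(R)$.

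The main obstacle is the middle step: establishing that $U$ is a complete injective resolution of $M$ rather than of some shifted or twisted module. Both the $\Hom$-exactness and the precise identification of $Z^0(U)$ with $M$ modulo injective summands require careful spectral-sequence bookkeeping and correct alignment of the grading conventions on $T^\bullet$ so that the syzygy $\stabsyz_d M=Z^{-d}(T^\bullet)$ matches the output of the $\Gamma_\m$ computation. If a direct verification is awkward, one can instead argue that both $\Gstab_\m(-)$ and $\stabsyz_d(-)\otimes E(R/\m)$ define triangulated functors on $\sMCM(R)$ that agree on the explicit complete resolutions constructed above, and then transfer along the equivalence $\stabsyz_d$ on $\sMCM(R)$ to conclude the identification for all $M$.
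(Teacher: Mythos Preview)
Your approach is essentially the same as the paper's: build a complete injective resolution of $M$ as the totalization of $\cpr(M)\otimes D$ (with the correct shift), apply $\Gamma_\m$ to collapse $D$ to $E(R/\m)$ in degree $d$, and then identify the relevant cocycle as $\stabsyz_d M\otimes E(R/\m)$ using a Tor-vanishing argument. The paper packages the first step into Proposition~\ref{computing_slc}(3), which yields $\Gstab_\m(M)\simeq Z^{1}(\cpr(M)\otimes_R\Gamma_\m(D))$, and then reads off the answer.

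The one place where the paper's argument genuinely differs from yours is in the ``middle step'' you correctly flag as the obstacle. Rather than directly verifying via spectral sequences that $T\otimes I$ is a complete injective resolution of $M$, the paper detours through the $R$-dual: starting from a complete projective resolution $T'\to P\to M^*$, one applies $\Hom_R(-,D)$ to obtain $M\simeq\Hom_R(M^*,D)\to\Hom_R(P,D)\to\Hom_R(T',D)$, which is visibly a complete injective resolution since $\Hom_R(P,D)$ is bounded below and the map $M\to\Hom_R(P,D)$ is a quasi-isomorphism by Ischebeck's Ext-vanishing. Then one invokes $\Hom_R(T',D)\cong (T')^*\otimes_R D\cong \Sigma^1\cpr(M)\otimes_R D$. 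This route is cleaner than your proposed direct verification: it produces the injective-resolution part of the diagram explicitly, pins down the shift by $1$ without ambiguity, and over a Gorenstein ring the total-acyclicity condition on $\Hom_R(T',D)$ is automatic. Your attempt to identify $M$ with $Z^0(U)$ ``modulo injective summands'' is not quite the right criterion, since $Z^0(U)$ is Gorenstein injective and $M$ is not; what one actually needs is the diagram $M\to J\to U$, and the dual-of-$M^*$ construction supplies it.

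Your Matlis-duality argument for $\Tor_i^R(N,E(R/\m))=0$ when $N$ is MCM is correct and makes explicit a step the paper handles by citing \cite[Lemma~4.5]{Mur13}.
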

Perhaps the next case of interest is a height $d-1$ prime ideal $\q$ of $R$.  In Proposition \ref{d_1_prime_slc_relations}, we relate $\Gstab_\m(M)$ and $\Gstab_\q(M)$ in an exact triangle in $\sGInj(R)$:
$$\Gstab_\m(M)\to \Gstab_\q(M)\to \Gstab_\q(M_\q)\to.$$
Furthermore, we have (Proposition \ref{ses_of_slc_1})
\begin{propb}
Let $R$ be a Gorenstein ring of dimension $d$, $M$ any $R$-module, $\a$ any ideal of $R$, and $x\in R$ any element.  Set $\b=(\a,x)$.  Then there exists a short exact sequence of $R$-modules
$$0\to \Gstab_\b(M)\to \Gstab_\a(M)\to \Gstab_\a(M_x)\to 0.$$
\end{propb}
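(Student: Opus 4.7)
The plan is to realize the three stable local cohomology modules as the degree-zero syzygies of a single short exact sequence of acyclic complexes, and to derive that sequence from a split short exact sequence of functors applied degreewise to a complete injective resolution of $M$. The starting observation is that for any $R$-injective module $N$, there is a natural split short exact sequence
\[
0 \to \Gamma_x(N) \to N \to N_x \to 0,
\]
read off from the Matlis decomposition $N = \bigoplus_\p E(R/\p)^{(\mu_\p)}$: localization at $x$ annihilates the summands with $x\in\p$ while $\Gamma_x$ annihilates the summands with $x\notin\p$, so the two functors pick out complementary direct summands. Applying $\Gamma_\a$ preserves splitness, and using $\Gamma_\a\circ\Gamma_x = \Gamma_\b$ produces
\[
0 \to \Gamma_\b(N) \to \Gamma_\a(N) \to \Gamma_\a(N_x) \to 0.
\]

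Next I would let $U$ be a complete injective resolution of $M$ and apply the above degreewise, producing a short exact sequence of $R$-complexes
\[
0 \to \Gamma_\b(U) \to \Gamma_\a(U) \to \Gamma_\a(U_x) \to 0.
\]
The first two complexes are acyclic by the property of $\Gamma_\a$ and $\Gamma_\b$ acting on complete injective resolutions that underlies the definition of $\Gstab$ recorded in the introduction. For the third, I would verify that $U_x$ is a complete injective resolution of $M_x$ as an $R$-module: each $U^i_x$ is $R$-injective because $R$ is Noetherian, $U_x$ remains acyclic because localization is exact, and the Gorenstein hypothesis guarantees that every acyclic complex of $R$-injectives is totally acyclic. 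The identification $\ker(d^0: U^0_x \to U^1_x) = M_x$ follows from flatness of localization, so $U_x$ resolves $M_x$; consequently $\Gamma_\a(U_x)$ is acyclic and its zeroth syzygy is $\Gstab_\a(M_x)$.

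Finally, I would apply the zeroth-syzygy functor $\ker(d^0)$ to the short exact sequence of acyclic complexes. Left exactness of $\ker$ gives the first three terms for free, and the snake lemma shows that the connecting morphism lands in $H^1(\Gamma_\b U)$, which vanishes by acyclicity; this upgrades left exactness to the desired short exact sequence
\[
0 \to \Gstab_\b(M) \to \Gstab_\a(M) \to \Gstab_\a(M_x) \to 0.
\]

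The principal obstacle is verifying the identification $\Gstab_\a(M_x) = \ker\bigl(d^0: \Gamma_\a(U_x)^0 \to \Gamma_\a(U_x)^1\bigr)$, which requires knowing that $U_x$ is a complete injective resolution of $M_x$ over $R$ and not merely over $R_x$. This is the point at which the Gorenstein hypothesis plays its essential role, via the equivalence between acyclic complexes of $R$-injectives and complete injective resolutions; the remainder of the argument is formal, relying only on the split nature of the initial exact sequence and on the exactness of $\ker(d^0)$ on short exact sequences of acyclic complexes.
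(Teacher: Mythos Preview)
Your approach matches the paper's: start from a complete injective resolution $M \to I \to U$, use the degreewise-split short exact sequence $0 \to \Gamma_x(U) \to U \to U_x \to 0$, apply $\Gamma_\a$, and extract the sequence of zeroth syzygies. Your snake-lemma justification for right-exactness of $Z^0$ is a useful expansion of what the paper compresses into ``truncating at $0$ and taking cohomology''.

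There is, however, a slip in your verification that $U_x$ is a complete injective resolution of $M_x$. You claim that $\ker(d^0\colon U^0_x \to U^1_x) = M_x$ ``follows from flatness of localization'', which presupposes $Z^0(U) = M$; this is false in general. A complete injective resolution is a \emph{diagram} $M \xrightarrow{\iota} I \xrightarrow{\nu} U$ with $\nu^i$ an isomorphism for $i \gg 0$, and $Z^0(U)$ is the Gorenstein injective stabilization of $M$, not $M$ itself (for instance, $U = 0$ whenever $\id_R M < \infty$). The correct argument is to localize the whole diagram: $M_x \to I_x \to U_x$ is again such a diagram because localization preserves injective resolutions and isomorphisms, and $U_x$ is totally acyclic over the Gorenstein ring $R$ as you note. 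This is precisely what the paper invokes. Since the statement asserts a short exact sequence of $R$-modules (not merely an isomorphism in $\sGInj(R)$), you should also begin with a \emph{minimal} $U$ and observe that localization preserves minimality, so that $Z^0(\Gamma_\a(U_x))$ literally equals $\Gstab_\a(M_x)$ by definition.
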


If $M$ is a MCM $R$-module, recall that $\depth(\a)$ and $\cd(\a)$ are the integers representing the first and last, respectively, degrees at which $H_\a^i(M)$ is non-vanishing. In the case where $\depth(\a)=\cd(\a)$, i.e., $H_\a^i(M)=0$ for all $i\not=\depth(\a)$, we are able to relate the stable local cohomology module and the one non-zero local cohomology module (see Theorem \ref{connect} for a more general statement).  One instance where $\depth(\a)=\cd(\a)$ is when $\a$ is generated (up to radical) by a regular sequence.
\begin{thmc}
Let $R$ be a Gorenstein local ring of Krull dimension $d$.  Suppose $M\not=0$ is a MCM $R$-module, such that $\a\subset R$ is an ideal satisfying $c=\depth(\a)=\cd(\a)$.  Then there exists a short exact sequence
$$0\to H_\a^c(M)\to \Gstab_\a(\cosyz^{c}M)\oplus E_R(H_\a^c(M))\to K\to 0,$$
where $\id_RK<\infty$.  Moreover, when $0\leq c\leq t-1$, we have $\id_RK=t-c-1$ and when $c=t$, the sequence splits and $K\cong E_R(\Gstab_\a(\cosyz^tM))$.
\end{thmc}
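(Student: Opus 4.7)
My plan is first to identify $\Gstab_\a(\cosyz^c M)$ explicitly with $\Gamma_\a(\cosyz^c M)$, then to extract a four-term exact sequence exhibiting $H_\a^c(M)$ as a quotient of $\Gstab_\a(\cosyz^c M)$, and finally to flip the direction of this surjection using the injective envelope.

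The first step uses that the hypothesis gives $H_\a^j(\cosyz^c M) = H_\a^{j+c}(M) = 0$ for all $j \geq 1$. Let $V$ be a complete injective resolution of $\cosyz^c M$; since $\Gamma_\a V$ is acyclic, applying $\Gamma_\a$ to the degreewise split short exact sequence of complexes decomposing $V$ into its nonnegative part (which resolves $\cosyz^c M$ by injectives) and its negative part, then chasing the resulting long exact cohomology sequence, yields an isomorphism $\Gstab_\a(\cosyz^c M) \cong \Gamma_\a(\cosyz^c M)$ as $R$-modules.

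Second, let $I$ be a minimal injective resolution of $M$. The canonical short exact sequence $0 \to \cosyz^{c-1} M \to I^{c-1} \to \cosyz^c M \to 0$, combined with the shift identity $H_\a^1(\cosyz^{c-1} M) \cong H_\a^c(M)$ and the vanishing $H_\a^1(I^{c-1}) = 0$, produces under $\Gamma_\a$ the four-term exact sequence
\[
0 \to \Gamma_\a(\cosyz^{c-1} M) \to \Gamma_\a(I^{c-1}) \to \Gstab_\a(\cosyz^c M) \xrightarrow{\pi} H_\a^c(M) \to 0.
\]
The kernel $K' = \ker \pi$ is then a quotient of the injective module $\Gamma_\a(I^{c-1})$. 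Iterating the same argument inductively through $\cosyz^i M$ for $i < c$, using the remaining vanishing of local cohomology of $M$, controls the injective dimension of $K'$ (and of $\Gamma_\a(\cosyz^{c-1} M)$) in terms of the relevant tail of $I$ and the Bass numbers of $M$.

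Third, I construct the desired injection $H_\a^c(M) \hookrightarrow \Gstab_\a(\cosyz^c M) \oplus E_R(H_\a^c(M))$ by combining the surjection $\pi$ with the injective envelope $\iota: H_\a^c(M) \hookrightarrow E_R(H_\a^c(M))$. The key observation is that the naive choice $h \mapsto (0, \iota(h))$ has cokernel of infinite injective dimension, so the two maps must be mixed nontrivially; I plan to use a pullback-type construction mediated by the composite $\iota \pi: \Gstab_\a(\cosyz^c M) \to E_R(H_\a^c(M))$ to exhibit $H_\a^c(M)$ as the right submodule of the direct sum. The cokernel $K$ then fits into a short exact sequence involving $K'$ and $E_R(H_\a^c(M))/H_\a^c(M)$, and the value $\id_R K = t-c-1$ follows from the inductive injective-dimension bounds obtained in the second step. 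When $c = t$, the cosyzygy $\cosyz^t M$ is Gorenstein injective, which forces the extension to split and yields $K \cong E_R(\Gstab_\a(\cosyz^t M))$ as a degenerate case of the general construction.

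The hard part will be this third step: constructing the injection in the direction opposite to the natural surjection $\pi$, and then pinning down the exact injective dimension of $K$ rather than just its finiteness. Care is needed to choose the construction so that the image of $H_\a^c(M)$ is large enough inside $\Gstab_\a(\cosyz^c M) \oplus E_R(H_\a^c(M))$ to leave behind a cokernel of the prescribed finite injective dimension.
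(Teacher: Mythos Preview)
Your first step contains a genuine error. You claim that for a complete injective resolution $V$ of $\cosyz^c M$, the truncation $V^{\geq 0}$ is an injective resolution of $\cosyz^c M$. This is false when $c<t$: by definition a complete injective resolution is a diagram $\cosyz^c M \to I' \to V$ in which $I'^i \to V^i$ is an isomorphism only for $i\gg 0$, so $V^{\geq 0}$ is an injective resolution of $Z^0(V)$, the Gorenstein injective approximation of $\cosyz^c M$, not of $\cosyz^c M$ itself. Consequently $\Gstab_\a(\cosyz^c M) = \Gamma_\a(Z^0(V))$, which is generally \emph{not} isomorphic to $\Gamma_\a(\cosyz^c M)$ as an $R$-module. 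The two coincide precisely when $\cosyz^c M$ is already Gorenstein injective, i.e.\ when $c\geq t$; for $c<t$ your four-term sequence in step two is therefore built on the wrong module, and the inductive control of $\id_R K'$ does not go through as stated.

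The paper's argument avoids this problem entirely by working with the comparison map $\Gamma_\a(I)\to\Gamma_\a(U)$ between the injective resolution and the totally acyclic complex simultaneously, rather than trying to collapse one into the other. One first shows $\Gamma_\a(I^\ell)=0$ for $\ell<c$ via a Bass-number computation, so that both $\Gamma_\a(I^{\geq c})$ and $\Gamma_\a(U^{\geq c})$ are left resolutions of the common module $\Gamma_\a(Z^{t+1}(I))=\Gamma_\a(Z^{t+1}(U))$. Totalizing the resulting double complex (the mapping cone of $\Gamma_\a(I)\to\Gamma_\a(U)$, truncated) directly produces the injection
\[
H_\a^c(M)\hookrightarrow \Gstab_\a(\cosyz^c M)\oplus \Gamma_\a(I^c),
\]
where $\Gamma_\a(I^c)=E_R(H_\a^c(M))$ because $\Gamma_\a$ preserves essential extensions. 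The cokernel $K$ then comes with an explicit injective resolution given by the remaining terms of the totalization, from which $\id_R K = t-c-1$ (and the splitting when $c=t$) can be read off after cancelling the contractible pieces. This bypasses your ``hard third step'' altogether: there is no surjection to reverse, because the injection arises naturally from the mapping cone.
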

Here $\cosyz^cM$ represents the $c$-th cosyzygy of $M$, i.e., if $M\to I$ is an injective resolution, then $\cosyz^cM=\ker(I^c\to I^{c+1})$.  

In fact, the short exact sequence of Theorem C gives a Gorenstein injective approximation of $H_\a^c(M)$, see Corollary \ref{ginj-approx}.  In particular, we have an isomorphism $H_\a^c(M)\simeq \Gstab_\a(\cosyz^cM)$ in the stable category $\sGInj(R)$.

We now give a brief outline of the paper.  In section \ref{preliminaries}, we set notation and review some basics of injective modules and Gorenstein homological algebra. 

In section \ref{resolutions_and_constructions}, we explore alternative ways of constructing ``stable'' resolutions; we develop some of the constructions, based on much of the projective analogues found in \cite{AM02}.  One of the main goals of this section is Proposition \ref{cir_hom_and_tensor} which gives a way to build complete injective resolutions from complete projective resolutions.

We define and build up the notion of stable local cohomology in section \ref{stable}.  This theory builds (in a more concrete fashion) the functor that was touched on by Stevenson in \cite{Ste14}.  Our definition appears at Definition \ref{stableloccohdefn}.  We also derive relations between stable local cohomology modules that are analogous to ones found in classical local cohomology theory; in particular, we prove Propositions A and B from above.

In section \ref{hypersurface_section}, we explore the hypersurface case.  Here we also compute some explicit stable local modules.

Finally, in section \ref{bridge}, we show there is a tight connection between stable local cohomology and classical local cohomology, at least in the case where there is only one non-zero local cohomology module.  Our main result in this direction is Theorem \ref{connect}, which we prove in this section (in particular, this proves Theorem C from above).  In fact, the stable local cohomology module will give a Gorenstein injective approximation of $H_\a^i(M)$, see Corollary \ref{ginj-approx}.\\

\noindent
{\bf Acknowledgements:} First and foremost, many thanks are owed to my advisor Mark Walker, for countless hours of conversation and advice. Additionally, conversations with many individuals in the UNL department of mathematics have been very helpful; in particular I would like to thank Luchezar Avramov, Haydee Lindo, and Tom Marley.

\section{Preliminaries}\label{preliminaries}
We first introduce notation for the categories we will be considering.
\begin{notation}
Let $\C(\ModR)$ denote the category of complexes of $R$-modules and $\K(\ModR)$ the associated homotopy category.  Here, $\ModR$ can be replaced with $\Prj R$ or $\Inj R$, representing projective modules or injective modules, respectively.  If we only want to consider finitely generated modules, we will use lower case letters, namely $\modR$ or $\prj R$.  We often will want to consider the full subcategories of acyclic complexes, which we will denote by $\Kac(-)$. 

When $R$ is Gorenstein, denote by $\sMCM(R)$ the category with the same objects as $\MCM(R)$ (the category of maximal Cohen-Macaulay $R$-modules), but with morphisms given by the following: if $M,N\in \sMCM(R)$, then 
$$\Hom_{\sMCM(R)}(M,N)=\Hom_R(M,N)/\{f:M\to N| f\text{ factors via some $P\in \prj R$}\}.$$
We call this the stable category of maximal Cohen-Macaulay $R$-modules.  Recall that in a Gorenstein ring, maximal Cohen-Macaulay (henceforth abbreviated MCM) modules coincide with finitely generated Gorenstein projective $R$-modules \cite[Corollary 10.2.7]{EJ00}.   

Likewise, $\sGInj(R)$ denotes the stable category of Gorenstein injective $R$-modules, where objects are the same as in $\GInj(R)$, (the category of Gorenstein injective modules, whose definition we recall below) and we have factored the $\Hom$ sets by those by maps that factor through an injective module.  

We will use $\simeq$ to denote isomorphism in stable categories (context should be clear) or to denote a homotopy equivalence in $\C(\ModR)$, and $\cong$ to denote isomorphism in $\ModR$ (or in $\C(\ModR)$).
\end{notation}

\subsection{Basic tools}
We call $C$ a {\em complex (of $R$-modules)} if $C$ is a $\Z$-graded $R$-module with a differential $\del$ such that $\del^2=0$.  We can either display our complexes homologically:
$$C=\cdots \to C_{i+1}\to C_i\to C_{i-1}\to \cdots$$
or cohomologically:
$$C=\cdots \to C^{i-1}\to C^i\to C^{i+1}\to \cdots$$
We say that a complex $C$ is {\em bounded on the left (resp. right)} if $C_i=0$ for $i\gg0$ or $C^i=0$ for $i\ll0$ (resp. $C_i=0$ for $i\ll0$ or $C^i=0$ for $i\gg0$).  
For two complexes $C$ and $D$, we define their tensor product $C\otimes_R D$ as the direct sum totalization of the obvious double complex and $\Hom_R(C,D)$ as the direct product totalization of the corresponding double complex (see \cite{Wei94} 2.7.1 and 2.7.4, respectively).

For a complex $C$ of $R$-modules, we denote by $\Sigma^iC$ as the complex with $(\Sigma^iC)^n=C^{n+i}$ and differential $\del_{\Sigma^iC}^n=(-1)^i\del_C^{n+i}$.  Given a complex $C$, set $Z^i(C):=\ker(C^i\to C^{i+1})$ and $\Omega_i(C):=\coker(C_{i+1}\to C_i)$. 

The {\em truncation} of a complex $C$, denoted $C^{\geq i}$, is the complex where $(C^{\geq i})^j=\begin{cases}C^j, & j\geq i\\ 0, & j<i\end{cases}$.  Similarly, we may use $C_{\geq i}$, $C^{\leq i}$, or $C_{\leq i}$.  

If $f,g:C\to D$ are two chain maps, we use $f\sim g$ to denote the existence of a homotopy from $f$ to $g$, i.e., there exists a cohomological degree $-1$ map $h:C\to D$ such that $f-g=\del_Dh+h\del_C$.  A complex $C$ is {\em contractible} if $\id_C\sim 0_C$.  A subcomplex $A$ of $C$ is {\em irrelevant} if $A^i$ is a summand of $C^i$ for each $i\in \Z$ and $A$ is contractible. 

We denote the $R$-dual of a complex $C$ by $C^*:=\Hom_R(C,R)$.  
A {\em dualizing complex} $D$ for a ring $R$ is a complex of injective modules with bounded, finitely generated cohomology, and such that the natural homothety morphism $R\to \Hom_R(D,D)$ is a quasi-isomorphism.
If $D$ is a dualizing complex for a ring $R$, then $R$ is CM if and only if $H^i(D)=0$ for $i\not=0$ \cite[1.4]{ABS05}.  Furthermore, $R$ is Gorenstein if and only if $H^i(D)=0$ for $i\not=0$ and $H^0(D)\cong R$ \cite[1.5.7]{ABS05}.

When working in a Gorenstein ring $R$, the minimal injective resolution of $R$ is a dualizing complex for $R$, which is unique up to isomorphism.  Because we can explicitly write out a minimal injective resolution of $R$, we will often assume $D$ is a particular minimal injective resolution rather than just a dualizing complex for $R$.

For the remainder of this subsection, assume $R$ is a commutative Noetherian ring.
Recall that for an $R$-module $M$, the $\a $-torsion functor $\Gamma_\a (-)$ is defined as
$$\Gamma_\a (M)=\{x\in M:\a ^nx=0\text{ for some $n$}\},$$
which yields a left exact functor \cite[7.1 and 7.2]{24hours}.  If $I$ is an injective resolution of $M$, the {\em $i$-th local cohomology module with support in $\a$ (or in $V(\a)$)} is $H_\a ^i(M):=H^i(\Gamma_\a (I))$.

Recall that over a Noetherian ring $R$, we have a decomposition of injective $R$-modules, due to Matlis \cite{Ma58}. In fact, there exists a bijection between prime ideals $\p $ of $\Spec(R)$ and indecomposable injective modules $E(R/\p )$, where $E(R/\p)=E_R(R/\p)$ denotes the injective hull of $R/\p$ over $R$.  In this way, every injective $R$-module $J$ can be uniquely (up to isomorphism) expressed as $J\cong \bigoplus_{\p \in \Spec(R)}E(R/\p )^{\alpha_\p}$.  The irreducible injective module $E(R/\p)$ is $\p$-torsion and $\p$-local \cite{Sha69}.  

It's straightforward to see that for any prime ideal $\p$ and any other ideal $\a$, we have $\Gamma_\a(E(R/\p))=\begin{cases}
E(R/\p), & \p\supseteq \a\\
0, & \p\not\supseteq \a
\end{cases}$.  
From this, it follows that if $J$ is an injective $R$-module, then $\Gamma_\a(J)$ is also injective.  In a similar way, we have $\Hom_R(R/\m ,E(R/\p ))=\begin{cases}R/\m ,& \text{if $\p =\m $}\\0,&\text{if $\p \not=\m $}\end{cases}$ \cite[Theorem A.20]{24hours}.  As a last remark about the interplay between $\Gamma_\a$ and injectives, we note that $E(\Gamma_\a(M))\cong \Gamma_\a(E(M))$.

\subsection{Gorenstein homological algebra}\label{gorenstein_homological_algebra}
Gorenstein projective and Gorenstein injective modules (intruduced and studied in \cite{EJ95}, see definitions below) over a Gorenstein ring can be thought of as acting similar to projective and injective modules over a regular local ring.  For instance, over a Gorenstein local ring $R$, all $R$-modules have both finite Gorenstein projective dimension and finite Gorenstein injective dimension \cite[4.4.8 and 6.2.7]{Chr00}.  If we assume our ring is Cohen-Macaulay with a dualizing complex, we have an important inequality:  The Gorenstein projective (Gorenstein injective) dimension of a module is always less than or equal to the projective (injective) dimension of a module, with equality holding if the projective (injectve) dimension is finite \cite[4.4.7 and 6.2.6]{Chr00}.  Immediately we see that projective (injective) modules are Gorenstein projective (Gorenstein injective).  For relevant definitions and basics for Gorenstein projective and Gorenstein injective modules, we will use primarily as references Enochs and Jenda's book \cite{EJ00} and Christensen's book \cite{Chr00}.  

\begin{defn}\cite[Definition 10.1.1]{EJ00}
An $R$-module $M$ is said to be {\em Gorenstein injective} if and only if there is a (possibly unbounded) exact complex $U$ of injective $R$-modules such that $M=Z^0(U)$ and such that for any injective $R$-module $J$, $\Hom_R(J,U)$ is exact. 

We say $M$ is {\em Gorenstein projective} if and only if there is a (possibly unbounded) exact complex $T$ of projective $R$-modules such that $M=\Omega_0(T)$ and such that for any projective $R$-module $P$, $\Hom_R(T,P)$ is exact.
\end{defn}

\begin{defn}
Let $M$ be an $R$-module.  If $\phi:E\to M$ is a homomorphism where $E$ is an injective $R$-module, then $\phi:E\to M$ is called an {\em injective precover} if $\Hom_R(J,E)\to \Hom_R(J,M)\to 0$ is exact for every injective module $J$ \cite[Definition 1.1]{EJ95}.

We call $\phi:E\to M$ an {\em injective cover} if $\phi$ is an injective precover and whenever $f:E\to E$ is linear such that $\phi\circ f=\phi$ then $f$ is an isomorphism of $E$. 

We call a complex of the form
$$\cdots \to E_1\to E_0\to M\to 0$$
an {\em injective resolvent} of $M$ if $E_0\to M$, $E_1\to \ker(E_0\to M)$, $E_i\to \ker(E_{i-1}\to E_{i-2})$ for $i\geq 2$ are all injective precovers \cite[Definition 1.3]{EJ95}.  If these maps are all injective covers, we say the complex is a {\em minimal injective resolvent} of $M$.  In this case the complex is unique up to isomorphism \cite[page 613]{EJ95}. In general an injective resolvent is unique up to homotopy \cite[page 613]{EJ95}.
\end{defn}

In general injective (pre)covers are not necessarily surjective.  For examples of injective (pre)covers, see \cite{CEJ88}.  However, we do have that an $R$-module $M$ is Gorenstein injective if and only if its minimal injective resolvent is exact and $\Ext_R^i(J,M)=0$ for $i\geq 1$ when $J$ is any injective $R$-module \cite[Corollary 2.4]{EJ95}.

Finally, an $R$-module $M$ is called {\em reduced} if it has no non-zero injective submodules \cite[page 241]{EJ00}.

\section{Complete resolutions}\label{resolutions_and_constructions}
We first introduce complete projective and complete injective resolutions.  When $R$ is Gorenstein, we briefly recall the construction of a minimal complete projective resolution of a MCM module (the situation of \cite[Construction 3.6]{AM02} which we will utilize) and more carefully go through the construction of a minimal complete injective resolution of any module (which to our knowledge doesn't explicitly appear in the literature).  With these tools, our first goal will be to construct more computationally convenient complete injective resolutions for MCM modules. 

\subsection{Minimality and complete resolutions}
For this subsection, let $R$ be a commutative noetherian ring.  We essentially follow \cite{CJ14} for definitions regarding complete resolutions.

\begin{defn}
An acyclic complex $T$ of projective $R$-modules is called a {\em totally acyclic complex of projectives} if the complex $\Hom_R(T,Q)$ is acyclic for every projective $R$-module $Q$.  An acyclic complex $U$ of injective $R$-modules is called a {\em totally acyclic complex of injectives} if the complex $\Hom_R(J,U)$ is acyclic for every injective $R$-module $J$.  When context is clear, we often just refer to either such complex as {\em totally acyclic}.
\end{defn}

\begin{rem}\label{gor_acy_totacy}
If $R$ is Gorenstein, a complex of projective (resp., injective) $R$-modules is totally acyclic if and only if it is acyclic \cite[Corollary 5.5]{IK06}. With this in mind, an $R$-module $M$ is Gorenstein projective if and only if there exists an exact complex $T$ of projective $R$-modules such that $\Omega_0(T)=M$; $M$ is Gorenstein injective if and only if there exists an exact complex $U$ of injective $R$-modules such that $Z^0(U)=M$.
\end{rem}

\subsubsection{Minimal complexes}
\begin{defn}\cite{AM02}\label{defn_of_minimal_complex}
A complex $C$ is {\em minimal} if each homotopy equivalence $\gamma:C\to C$ is an isomorphism.
\end{defn}
An equivalent condition for minimality is given in:
\begin{prop}\cite[Proposition 1.7]{AM02}
Let $C$ be a complex of $R$-modules.  Then $C$ is minimal if and only if each morphism $\gamma:C\to C$ homotopic to $\id_C$ is an isomorphism.  Additionally, if $C$ is minimal and $A$ an irrelevant subcomplex, then $A=0$. 
\end{prop}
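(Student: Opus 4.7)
The plan is to prove the biconditional first and then extract the statement on irrelevant subcomplexes as a corollary. For the ``only if'' direction, if $C$ is minimal and $\gamma\sim\id_C$, then $\id_C$ serves as a two-sided homotopy inverse for $\gamma$, so $\gamma$ is a homotopy equivalence and hence an isomorphism by the definition of minimality. For the ``if'' direction, assume every endomorphism of $C$ homotopic to $\id_C$ is an isomorphism, and let $\gamma:C\to C$ be a homotopy equivalence with homotopy inverse $\beta$. Then $\beta\gamma\sim\id_C$ and $\gamma\beta\sim\id_C$, so by hypothesis both $\beta\gamma$ and $\gamma\beta$ are genuine isomorphisms. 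Consequently $(\beta\gamma)^{-1}\beta$ is a left inverse and $\beta(\gamma\beta)^{-1}$ a right inverse of $\gamma$ in the category of complexes, which forces $\gamma$ itself to be an isomorphism.

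For the additional statement, suppose $C$ is minimal and $A\subseteq C$ is irrelevant. By definition $A^i$ is a direct summand of $C^i$ in each degree, so I fix a graded splitting $C^i=A^i\oplus B^i$ and record that, since $A$ is a subcomplex, the differential of $C$ has the upper-triangular block form
\[\del_C=\begin{pmatrix}\del_A & \phi\\ 0 & \del_B\end{pmatrix}\]
for some degree $+1$ map $\phi:B\to A$, where $\del_B$ is the induced differential on the graded quotient $B\cong C/A$. Contractibility of $A$ supplies a degree $-1$ map $s_A:A\to A$ with $\del_A s_A+s_A\del_A=\id_A$. I would then extend $s_A$ by zero on $B$ to obtain a degree $-1$ map $s:C\to C$ and set $\gamma:=\id_C-(\del_C s+s\del_C)$, which is tautologically homotopic to $\id_C$.

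A direct block-matrix computation with the data above yields
\[\gamma=\begin{pmatrix}0 & -s_A\phi\\ 0 & \id_B\end{pmatrix},\]
so $\gamma$ vanishes identically on $A$. By the biconditional already established, $\gamma$ must be an isomorphism, and an isomorphism that annihilates $A$ can only exist if $A=0$. The main (and essentially only) obstacle is the block-matrix calculation: one must verify that $\del_C s$ contributes only the $(1,1)$-entry $\del_A s_A$, while $s\del_C$ contributes $s_A\del_A$ on the diagonal and $s_A\phi$ off the diagonal; summing these and subtracting from $\id_C$ gives the displayed form for $\gamma$. Once that is in place, the conceptual content is just that a contractible direct summand (in the degreewise sense) of a minimal complex must vanish, because its contracting homotopy extends by zero to witness a self-map homotopic to the identity that cannot be injective unless the summand is trivial.
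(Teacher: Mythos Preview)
Your proof is correct. Note, however, that the paper does not supply its own proof of this proposition: it is quoted with a citation to \cite[Proposition 1.7]{AM02} and left unproved, so there is nothing in the present paper to compare against. Your argument is the standard one and matches what one finds in the cited reference: the biconditional is immediate from the definition of minimality plus the observation that a map with both a left and a right inverse is invertible, and for the second part you build an idempotent-up-to-homotopy endomorphism $\gamma=\id_C-(\del_C s+s\del_C)$ from the contracting homotopy of $A$ extended by zero, then use minimality to force $\gamma$ to be an isomorphism, which is impossible unless $A=0$ since $\gamma$ kills $A$. The block computation you outline is accurate.
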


If $M\to I$ is an injective resolution such that $I$ is minimal, then $M\to I$ is a {\em minimal injective resolution of $M$}.  Similarly, if $P\to M$ is a projective resolution such that $P$ is minimal, then $P\to M$ is a {\em minimal projective resolution of $M$}.

\begin{rem}\label{min_inj_same_as_zero_maps}
When $C$ is a complex of finitely generated projectives over a local ring, Definition \ref{defn_of_minimal_complex} is equivalent to the familiar notion of a minimal complex of free modules \cite[Proposition 8.1]{AM02}; when $C$ is an injective resolution of some module, this notion of minimality is equivalent \cite[Example 1.8]{AM02} to the essential hull notion of minimality as in \cite[Remark 3.15]{24hours}.  More explicitly, any complex of injective modules $U$ is minimal if and only if $U^i$ is the injective hull of $\ker \del_U^i$ for all $i\in \Z$ if and only if the result of applying $\Hom_R(R/\p,-)_\p$ to the morphism $\del_U^i:U^i\to U^{i+1}$ gives the zero morphism for all $i\in \Z$ and all $\p\in \Spec(R)$.
\end{rem}

\subsubsection{Complete projective resolutions}

\begin{defn}
A {\em complete projective resolution} of an $R$-module $M$ is a diagram 
$$T\xrightarrow{\tau} P\xrightarrow{\pi} M,$$
where $\tau$ and $\pi$ are chain maps, $T$ is a totally acyclic complex of projective modules, $\pi:P\to M$ is a projective resolution, and $\tau_i:T_i\to P_i$ is an isomorphism for $i\gg0$.  Such a resolution is {\em minimal} if $T$ and $P$ are minimal complexes.  Occasionally we will refer to just the complex $T$ as a complete projective resolution for $M$.
\end{defn}

The following is a special case of \cite[Construction 3.6]{AM02}.

\begin{construction}\cite[Construction 3.6]{AM02}\label{cpr_const}
Given a MCM module $M$ over a Noetherian commutative ring $R$, we construct its complete projective resolution as follows.  Let $P\to M$ be a projective resolution with differential $\del^P$.  Let $L\to M^*$ be a projective resolution with differential $\del^L$.  Apply $(-)^*$ to $L\to M^*$ to obtain $M^{**}\to L^*$.  Say $\zeta:M\to M^{**}$ is the canonical isomorphism, $\pi:P_0\to M$ is the augmentation map, and $\iota:M^{**}\to (L_0)^*$.  Define 
$$T_i=\begin{cases}P_i, & i\leq 0\\ (L_{-i-1})^*, &i<0\end{cases} \text{ and } \del_i^T=\begin{cases}\del_i^P, & i> 0\\ \iota\circ\zeta \circ \pi, & i=0\\ (\del_{-i}^L)^*,& i<0\end{cases}$$
Then $T$ is an acyclic complex of projectives and there exists a chain map $\tau: T\to P$, where $\tau_i=\id_{P_i}$ for $i\geq 0$.

If $R$ is assumed to be Gorenstein local, then $T\to P\to M$ is easily checked to be a complete projective resolution of $M$.  If, moreover, $P\to M$ and $L\to M^*$ are chosen minimally and $M$ has no non-zero free summands, then $T\to P\to M$ is a minimal complete projective resolution.
\end{construction}

\subsubsection{Complete injective resolutions}

\begin{defn}\label{defn_cir}
A {\em complete injective resolution} of an $R$-module $M$ is a diagram 
$$M\xrightarrow{\iota}I\xrightarrow{\nu} U,$$
where $\iota$ and $\nu$ are chain maps, $U$ is a totally acyclic complex of injective modules, $\iota:M\to I$ is an injective resolution, and $\nu^i:I^i\to U^i$ is an isomorphism for $i\gg0$.  A {\em minimal complete injective resolution} of $M$ is such a resolution where $I$ and $U$ are minimal complexes.  Occasionally we will refer to just the complex $U$ as a complete injective resolution for $M$.
\end{defn}

\begin{rem}\label{existence_of_cir}
For an $R$-module $M$, a complete injective resolution of $M$ exists if and only if the Gorenstein injective dimension of $M$ is finite \cite[5.2]{CJ14}. Moreover, a local Cohen Macaulay ring $R$ admitting a dualizing complex is Gorenstein if and only if every $R$-module has finite Gorenstein injective dimension \cite[Gorenstein Theorem, GID Version 6.2.7]{Chr00}.  For a local Cohen Macaulay ring $R$ admitting a dualizing complex, every $R$-module has a complete injective resolution if and only if $R$ is Gorenstein.
\end{rem} 

\begin{lem}\label{exist_unique_maps_of_cir}
Suppose $M$ and $N$ are $R$-modules with complete injective resolutions, say $M\xrightarrow{\iota_M}I\xrightarrow{\rho_M} U$ and $N\xrightarrow{\iota_N}J\xrightarrow{\rho_N}V$, respectively. If $f:M\to N$ is a map, then there exist chain maps $\phi:I\to J$ and $\widetilde{\phi}:U\to V$ making the following diagram commute:
\[\xymatrix{
M \ar[r]^{\iota_M} \ar[d]^{f}  & I\ar[r]^{\rho_M} \ar[d]^{\phi} & U\ar[d]^{\widetilde{\phi}}\\
N\ar[r]^{\iota_N} & J\ar[r]^{\rho_N} & V.
}\]
Moreover, $\phi$ and $\widetilde{\phi}$ are unique up to homotopy equivalence.
\end{lem}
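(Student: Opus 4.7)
The plan is to construct $\phi$ first, then $\widetilde{\phi}$, each time exploiting the only structural tool available on $U$ and $V$: total acyclicity. Since $\iota_N:N\to J$ is an injective resolution and $I$ is bounded below, the classical comparison theorem for injective resolutions (dual to the projective version) will lift $f$ to a chain map $\phi:I\to J$ with $\phi\circ\iota_M=\iota_N\circ f$, and any two such lifts are chain homotopic. This settles the left square together with the uniqueness of $\phi$ up to homotopy.

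For $\widetilde{\phi}$, I would choose $n$ large enough that $\rho_M^i$ and $\rho_N^i$ are isomorphisms for all $i\geq n$, and set $\widetilde{\phi}^i:=\rho_N^i\phi^i(\rho_M^i)^{-1}$ in those degrees; the chain map relation and the right-square commutation in degrees $\geq n$ are then forced. The definition extends downward by induction: given $\widetilde{\phi}^j$ for $j\geq i$, the composite $\widetilde{\phi}^i\del_U^{i-1}:U^{i-1}\to V^i$ satisfies $\del_V^i\widetilde{\phi}^i\del_U^{i-1}=\widetilde{\phi}^{i+1}\del_U^i\del_U^{i-1}=0$, so it is a cocycle in $\Hom_R(U^{i-1},V)$. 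Since $V$ is totally acyclic and $U^{i-1}$ is injective, that complex is acyclic, so the cocycle is a coboundary, yielding $\widetilde{\phi}^{i-1}$ satisfying $\del_V^{i-1}\widetilde{\phi}^{i-1}=\widetilde{\phi}^i\del_U^{i-1}$. In the degrees $i-1<0$ the right-square condition is vacuous because $I^{i-1}=0$; in the finitely many middle degrees where $0\leq i-1<n$, the compatibility $\del_V^{i-1}(\rho_N^{i-1}\phi^{i-1})=(\widetilde{\phi}^i\del_U^{i-1})\rho_M^{i-1}$ (verified from the chain-map properties of $\phi$, $\rho_M$, and $\rho_N$) shows the chain map and right-square conditions are consistent, and the remaining freedom to adjust $\widetilde{\phi}^{i-1}$ by a cocycle in $\Hom_R(U^{i-1},V^{i-1})$ can be used to enforce both.

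For uniqueness, a second such pair $(\phi',\widetilde{\phi}')$ produces, via the comparison theorem, a chain homotopy $s:I\to J$ with $\phi-\phi'=\del_J s+s\del_I$. To build $\widetilde{s}:U\to V$ witnessing $\widetilde{\phi}-\widetilde{\phi}'=\del_V\widetilde{s}+\widetilde{s}\del_U$, I would run the analogous downward induction: in high degrees set $\widetilde{s}^i:=\rho_N^{i-1}s^i(\rho_M^i)^{-1}$ and verify the homotopy identity there from the chain-map properties of $\rho$, then extend to lower degrees by observing that at each stage the obstruction is again a cocycle in the acyclic complex $\Hom_R(U^{i-1},V)$, hence a coboundary.

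The core tool in both parts is the same: total acyclicity of $V$ together with injectivity of each $U^i$ makes $\Hom_R(U^i,V)$ acyclic, which is exactly what lets chain maps and homotopies propagate from the rigidly determined high-degree part to all degrees. The main bookkeeping obstacle is ensuring right-square commutation in the finitely many middle degrees $0\leq i<n$ where $\rho_M^i$ need not be an isomorphism; a conceptually cleaner route is to replace $I$ and $J$ at the outset by homotopy-equivalent injective resolutions satisfying $\rho_M^i=\id$ and $\rho_N^i=\id$ for all $i\geq 0$, which makes the right-square check in that range trivial and reduces the induction to its chain-map content alone.
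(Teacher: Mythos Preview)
Your approach matches the paper's: the comparison theorem for injective resolutions handles $\phi$, and for $\widetilde{\phi}$ you fix a high syzygy and extend downward via total acyclicity of $V$---this is precisely the content of the ``comparison theorem for injective resolvents'' the paper invokes. The construction of $\widetilde{\phi}$ and its uniqueness up to homotopy are sound.

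The gap is in your handling of the middle degrees $0\le i-1<n$. You try to enforce strict commutativity $\widetilde{\phi}^{i-1}\rho_M^{i-1}=\rho_N^{i-1}\phi^{i-1}$ there, but the claim that ``the remaining freedom to adjust $\widetilde{\phi}^{i-1}$ by a cocycle can be used to enforce both'' is not justified: you would need every map $I^{i-1}\to Z^{i-1}(V)$ to factor through $\rho_M^{i-1}:I^{i-1}\to U^{i-1}$, and nothing in the hypotheses guarantees this. Your ``cleaner route'' also fails: an injective resolution $I'$ of $M$ with $\rho_M^i=\id$ for all $i\ge 0$ would have to satisfy $I'^{\ge 0}=U^{\ge 0}$, but $U^{\ge 0}$ resolves $Z^0(U)$, not $M$.

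Fortunately neither fix is needed. The lemma only requires the right square to commute up to homotopy (this is exactly how the paper uses it in the subsequent lemma), and that follows directly from what you already have: the chain map $\widetilde{\phi}\,\rho_M-\rho_N\phi:I\to V$ vanishes in degrees $\ge n$, and the same total-acyclicity argument you used (each $\Hom_R(I^j,V)$ is acyclic since $I^j$ is injective and $V$ is totally acyclic) shows by downward induction that any such map is null-homotopic. Drop the middle-degree paragraph and the proposed replacement of $I$, and you are done.
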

\begin{proof}
The chain map $\phi$ making the square on the left commute exists and is unique up to homotopy equivalence by \cite[Comparison Theorem 2.3.7]{Wei94}.  The existence and uniqueness (up to homotopy equivalence) of $\widetilde{\phi}$ such that the square on the right also commutes follows from the Comparison Theorem for injective resolutions \cite[Comparison Theorem 2.3.7]{Wei94} and for injective resolvents \cite[page 169]{EJ00} applied to a high enough syzygy of $U\to V$.  
\end{proof}

\begin{lem}\label{cir_does_not_depend_on_choice}
Suppose $M$ and $N$ are $R$-modules with complete injective resolutions.  Suppose $M\xrightarrow{\iota_M}I\xrightarrow{\rho_M} U$ and $M\xrightarrow{\iota_M'}I'\xrightarrow{\rho_M'} U'$ are two choices of complete injective resolutions of $M$; similarly, suppose $N\xrightarrow{\iota_N}J\xrightarrow{\rho_N} V$ and $N\xrightarrow{\iota_N'}J'\xrightarrow{\rho_N'} V'$ are two choices of complete injective resolutions of $N$.  If $f:M\to N$ is a map inducing maps as in Lemma \ref{exist_unique_maps_of_cir}, then the following square commutes up to homotopy equivalence
\[\xymatrix{
U\ar[r]^{\widetilde{\phi}}\ar[d]^{\alpha}_{\simeq} & V\ar[d]^\beta_{\simeq}\\
U'\ar[r]^{\widetilde{\phi'}} & V'
}\]
where $\alpha$ and $\beta$ are the homotopy equivalences induced by Lemma \ref{exist_unique_maps_of_cir} applied to $\id_M$ and $\id_N$, respectively.
\end{lem}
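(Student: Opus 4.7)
The plan is to reduce everything to the uniqueness clause of Lemma \ref{exist_unique_maps_of_cir}, by exhibiting both $\beta \circ \widetilde{\phi}$ and $\widetilde{\phi'} \circ \alpha$ as chain maps $U \to V'$ that arise from lifting the single module map $f\colon M \to N$ along the complete injective resolutions $M \xrightarrow{\iota_M} I \xrightarrow{\rho_M} U$ and $N \xrightarrow{\iota_{N}'} J' \xrightarrow{\rho_N'} V'$. Once this is done, Lemma \ref{exist_unique_maps_of_cir} immediately forces the two compositions to be homotopic, which is exactly the desired commutativity up to homotopy.

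First I would make explicit the lifts on the injective resolution sides. Write $\phi_I\colon I \to J$, $\phi_{I'}'\colon I' \to J'$, $\alpha_I\colon I \to I'$, $\beta_J\colon J \to J'$ for the chain maps of Lemma \ref{exist_unique_maps_of_cir} at the level of injective resolutions, so that $\phi_I, \phi_{I'}'$ lift $f$ and $\alpha_I, \beta_J$ lift $\id_M, \id_N$, respectively. These fit into commuting squares with the augmentation and coaugmentation maps. Composing squares, $\beta_J \circ \phi_I\colon I \to J'$ is a chain map satisfying $(\beta_J \circ \phi_I)\circ \iota_M = \iota_N' \circ f$, and the corresponding map $\beta \circ \widetilde{\phi}\colon U \to V'$ satisfies $(\beta \circ \widetilde{\phi})\circ \rho_M = \rho_N' \circ (\beta_J \circ \phi_I)$. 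Thus the triple $\bigl(f,\,\beta_J\circ \phi_I,\, \beta\circ \widetilde{\phi}\bigr)$ is one valid instance of Lemma \ref{exist_unique_maps_of_cir} for the source resolution $U$ and target resolution $V'$. By the same pasting argument, $\bigl(f,\,\phi_{I'}'\circ \alpha_I,\, \widetilde{\phi'}\circ \alpha\bigr)$ is another such valid instance.

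Now I would invoke the uniqueness statement in Lemma \ref{exist_unique_maps_of_cir}: the chain map $U \to V'$ induced by $f$ is unique up to homotopy, regardless of which lifts are chosen to build it. Applied to the two triples above, this yields $\beta \circ \widetilde{\phi} \simeq \widetilde{\phi'} \circ \alpha$, which is precisely the assertion that the square in the statement commutes up to homotopy equivalence. (The fact that $\alpha$ and $\beta$ are themselves homotopy equivalences, coming from applying Lemma \ref{exist_unique_maps_of_cir} to $\id_M$ and $\id_N$ and reversing the roles, is not needed for commutativity but justifies the vertical decorations in the diagram.)

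The only real subtlety I anticipate is bookkeeping at the seam between the injective resolution half and the totally acyclic half of each complete injective resolution. One must know that uniqueness up to homotopy holds both for the classical comparison on $I$ and for the comparison of totally acyclic tails (i.e., for injective resolvents of a sufficiently high cosyzygy), so that composition of lifts really behaves functorially up to homotopy. This is exactly what the proof of Lemma \ref{exist_unique_maps_of_cir} supplies via \cite[Comparison Theorem 2.3.7]{Wei94} and \cite[page 169]{EJ00}, so no new input is required; the argument is pure diagram pasting plus an appeal to that uniqueness.
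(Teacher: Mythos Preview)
Your proposal is correct and follows essentially the same approach as the paper: both arguments exhibit $\beta\circ\widetilde{\phi}$ and $\widetilde{\phi'}\circ\alpha$ as lifts of $f$ (together with the intermediate injective-resolution lifts $\delta\circ\phi$ and $\phi'\circ\gamma$) along the resolutions $M\to I\to U$ and $N\to J'\to V'$, and then invoke the uniqueness clause of Lemma \ref{exist_unique_maps_of_cir}. The paper packages this as one large commutative diagram and reads off the same two triples, whereas you describe the diagram pasting verbally; there is no substantive difference.
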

\begin{proof}
Lemma \ref{exist_unique_maps_of_cir} yields the following diagram: 

\begin{equation*}
\xymatrix{%
M\ar@{->}[rr]^{f}\ar@{->}[dr]^{\iota_{M}}\ar[dddr]_{\iota_{M}'}&&N\ar@{->}[dr]^{\iota_{N}}\ar@{->}[dddr]|!{[dl];[dr]}\hole|!{[dd];[ddrr]}\hole_{\iota_{N}'}&&\\
&I\ar@{->}[rr]^{\phi}\ar[dr]^{\rho_{M}} %
\ar@{->}[dd]^{\gamma}&&
J \ar@{->}[dd]|!{[d];[d]}\hole^(.35){\delta}\ar[dr]^{\rho_{N}}
&&\\
&&U\ar@{->}[dd]_(.65){\alpha}^(.65){\simeq}
\ar@{->}[rr]^(.4){\widetilde{\phi}}&&V\ar@{->}[dd]^{\beta}_{\simeq}\\
&I'\ar@{->}[dr]_{\rho_{M}'}\ar@{->}[rr]|!{[r];[r]}\hole^(.35){\phi'}&&
J' \ar@{->} [dr]^{\rho_{N}'}&&\\
&&U'\ar@{->}[rr]^{\widetilde{\phi'}}&&V'
}
\end{equation*}
where $\gamma:I\to I'$ and $\alpha: U \to U'$ are the unique (up to homotopy) homotopy equivalences such that $\alpha\rho_M\iota_M=\rho_{M}'\iota_{M}'$ (and $\gamma\iota_M=\iota_{M}'$ and $\alpha\rho_{M}=\rho_{M}'\gamma$); $\delta:J\to J'$ and $\beta:V\to V'$ are the unique (up to homotopy) homotopy equivalences such that $\beta\rho_N\iota_N=\rho_{N}'\iota_{N}'$ (and $\delta\iota_N=\iota_{N}'$ and $\beta\rho_N=\rho_{N}'\delta$); $\widetilde{\phi}$ is the unique (up to homotopy) map such that $\widetilde{\phi}\rho_M\iota_M=\rho_N\iota_Nf$ (and $\iota_Nf=\phi\iota_M$ and $\rho_N \phi=\widetilde{\phi}\rho_M$); and $\widetilde{\phi'}$ is the unique (up to homotopy) map such that $\widetilde{\phi'}\rho_{M}'\iota_{M}'=\rho_{N}'\iota_{N}'f$ (and $\iota_{N}'f=\phi'\iota_{M}'$ and $\rho_{N}'\phi'=\widetilde{\phi'}\rho_{M}'$). Therefore we have that $\widetilde{\phi'}\alpha$ is the unique map (up to homotopy) such that $(\widetilde{\phi'}\alpha)\rho_M\iota_M=\rho_{M}'\iota_{N}'f$ (also making the intermediate diagrams commute with $\phi'\gamma$), and $\beta \widetilde{\phi}$ is the unique map (up to homotopy) such that $(\beta \widetilde{\phi}) \rho_M \iota_M=\rho_{N}'\iota_{N}'f$ (also making the intermediate diagrams commute with $\delta \phi$).  By the uniqueness of these maps, we then have that the front square commutes up to homotopy equivalence, i.e., $\widetilde{\phi'}\alpha \simeq \beta \widetilde{\phi}$ (such that this agrees with the intermediate maps where $\phi'\gamma=\delta \phi$). 
\end{proof}

\begin{prop}\label{CIR_functor}
Let $R$ be a Gorenstein ring and for each $R$-module $M$, choose a complete injective resolution $M\to I\to U$.  Then there exists a covariant functor $\CIR(-):\ModR\to \Kac(\Inj R)$ defined on objects by $\CIR(M)=U$.  Moreover, this functor does not depend on the choice of complete injective resolution up to a canonical natural isomorphism.
\end{prop}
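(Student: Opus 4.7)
The plan is to bootstrap Lemmas \ref{exist_unique_maps_of_cir} and \ref{cir_does_not_depend_on_choice} into the claimed functoriality statement. On objects, the assignment $M\mapsto U$ is literally the choice made in the hypothesis of the proposition; existence of such a $U$ for every $R$-module is guaranteed by Remark \ref{existence_of_cir} since $R$ is Gorenstein, and $U$ is a totally acyclic complex of injectives, hence lies in $\Kac(\Inj R)$ by Remark \ref{gor_acy_totacy}.

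For morphisms, given $f:M\to N$ and the chosen complete injective resolutions $M\to I\to U$ and $N\to J\to V$, Lemma \ref{exist_unique_maps_of_cir} produces a chain map $\widetilde{\phi}:U\to V$, unique up to homotopy. Defining $\CIR(f)$ to be the homotopy class $[\widetilde{\phi}]$ therefore gives a morphism in $\Kac(\Inj R)$ that does not depend on the choice of lift.

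Functoriality is then automatic from the uniqueness clause of Lemma \ref{exist_unique_maps_of_cir}. For $\id_M$, the chain map $\id_U$ itself is a valid lift, so $\CIR(\id_M)=[\id_U]=\id_{\CIR(M)}$. For a composition $M\xrightarrow{f}N\xrightarrow{g}L$ with respective lifts $\widetilde{\phi}$ and $\widetilde{\psi}$, the composite $\widetilde{\psi}\widetilde{\phi}$ is a valid lift of $g\circ f$, so uniqueness up to homotopy yields $\CIR(g\circ f)=\CIR(g)\circ\CIR(f)$ in $\Kac(\Inj R)$.

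For independence up to canonical natural isomorphism, let $\CIR'$ be the functor arising from another choice $M\mapsto U'$ of complete injective resolutions. Applying Lemma \ref{exist_unique_maps_of_cir} to $\id_M$ yields a chain map $\alpha_M:U\to U'$, and symmetrically a chain map $\beta_M:U'\to U$. Both $\beta_M\alpha_M$ and $\id_U$ are lifts of $\id_M$ to $U\to U$, so by uniqueness $\beta_M\alpha_M\sim\id_U$; similarly $\alpha_M\beta_M\sim\id_{U'}$, so each $\alpha_M$ is an isomorphism in $\Kac(\Inj R)$. Lemma \ref{cir_does_not_depend_on_choice} records precisely that for every $f:M\to N$ the square $\alpha_N\circ\CIR(f)\simeq\CIR'(f)\circ\alpha_M$ commutes, giving the naturality of $\alpha$. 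I do not anticipate a serious obstacle beyond careful bookkeeping; the only subtlety worth checking is that $\alpha_M$ admits a two-sided homotopy inverse in $\Kac(\Inj R)$, which the uniqueness half of Lemma \ref{exist_unique_maps_of_cir} handles as just noted.
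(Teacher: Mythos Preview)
Your proposal is correct and follows essentially the same approach as the paper: use Lemma \ref{exist_unique_maps_of_cir} to define $\CIR$ on morphisms and verify functoriality via its uniqueness clause, then invoke Lemma \ref{cir_does_not_depend_on_choice} for the natural isomorphism between different choices. You have simply spelled out details (existence of $U$ via Remark \ref{existence_of_cir}, the explicit check that $\alpha_M$ is a homotopy equivalence) that the paper leaves implicit.
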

\begin{proof}
By Lemma \ref{exist_unique_maps_of_cir}, we have that for any map $f:M\to N$ of $R$-modules, there exists a unique (up to homotopy equivalence) map $\CIR(f):\CIR(M)\to \CIR(N)$, where clearly $\CIR(-)$ respects the identity map and compositions (by appealing to uniqueness given by Lemma \ref{exist_unique_maps_of_cir}).

Moreover, Lemma \ref{cir_does_not_depend_on_choice} shows that any two families of choices of complete injective resolutions for such a functor $\CIR(-)$ yield naturally isomorphic functors, where the canonical natural isomorphism is given by Lemma \ref{cir_does_not_depend_on_choice}. 
\end{proof}

\begin{defn}
If $R$ is a Gorenstein local ring and $M$ is an $R$-module with a minimal complete injective resolution $M\to I\to U$, we define $\cir(M):=U\in \C(\ModR)$.  
By definition of minimality, $\cir(M)$ is defined uniquely up to isomorphism; however, considered as an assignment $\ModR\to \C(\ModR)$, $\cir(-)$ is {\em not} a functor since this isomorphism is non-canonical.  As an object in $\K(\ModR)$, however, $\cir(M)\simeq \CIR(M)$.
\end{defn}

\begin{rem}\label{cir_is_an_equivalence}
Recall that $\CIR(-)$ naturally factors through $\sGInj(R)$. By \cite[Proposition 4.7]{Ste14}, there is an equivalence $\xymatrix{
K_\text{ac}(\Inj R)\ar@/^/[r]^{Z^0(-)}&\sGInj(R)\ar@/^/[l]^{\CIR(-)}
}$.
\end{rem}

\begin{rem}
For an $R$-module $M$, Enochs and Jenda defined a ``complete minimal injective resolution of $M$'' to be the concatenation of the minimal injective resolvent $J\to M$ and minimal injective resolution $M\to I$ of $M$ \cite[Definition 1.8]{EJ95}.  However, in a Gorenstein ring, this complex is acyclic if and only if $M$ is Gorenstein injective \cite[Corollary 2.3]{EJ95}.  When $R$ is Gorenstein and $M$ is reduced and Gorenstein injective, this coincides with our notion of minimal complete injective resolution; when $M$ is just Gorenstein injective (not necessarily reduced), the concatenation of the minimal injective resolvent and minimal injective resolution of $M$ contains the minimal complete injective resolution (as we have defined) as a direct summand.
\end{rem}

For any $R$-module $M$, we now construct a minimal complete injective resolution of $M$.

\begin{construction}\label{construction_cir}
Assume $R$ is Gorenstein of dimension $d$ and $M$ is any $R$-module.  Let $\iota:M\to I$ be a minimal injective resolution of $M$, with differential $\del_I$ on $I$.  Fix the minimal integer $g\geq 0$ such that $\ker \del_I^g$ is reduced Gorenstein injective; such a $g$ exists and indeed is such that $g\leq d+1$ by \cite[Theorem 10.1.13]{EJ00}.  Set $G=\ker(\del_I^g)$ and $j:G\to I^g$ the canonical inclusion.  Letting $J$ be the minimal injective resolvent for $G$, which exists by \cite[Theorem 2.1]{Eno81}, we have that the augmented complex
$$\cdots \xrightarrow{\del_2^J} J_1\xrightarrow{\del_1^J} J_0\xrightarrow{\pi} G\to 0,$$
is exact by \cite[Corollary 2.4]{EJ95}.  Define the following complex
\begin{align*}
U^i=\begin{cases}
I^i, & \text{ if } i\geq g;\\
J_{g-1-i}, & \text{ if } i<g;
\end{cases}
&
\text{ and }
\del_U^i=
\begin{cases}
\del_I^i, & \text{ if }i\geq g\\
j\circ\pi, & \text{ if }i=g-1\\
\del_{g-1-i}^{J},& \text{ if } i<g-1
\end{cases}
\end{align*}

As $J$ is an injective resolvent of $G$, we have that $\pi: U^{g-1}\to G$ is an injective precover, and so there exists a map $\nu^{g-1}:I^{g-1}\to U^{g-1}$ such that $\nu^{g-1}\circ \pi$ agrees with the canonical surjection $I^{g-1}\to G$.  The map $\nu^{g-1}$ restricts to a map $\ker \del_I^{g-1} \to \ker \del_U^{g-1}$, and then we induct, using that $U^{g-i}\to\ker(\del_U^{g-i+1})$ are injective precovers for $i>1$.  Induction gives maps $\nu^i:I^i\to U^i$ for all $i<g$, making all of the squares commute in the following diagram, where we also set $\nu^i=\id_{I^i}$ for all $i\geq g$ and unlabeled maps are the obvious ones given above:

\[\xymatrix{
\cdots \ar[r] &U^{-1}\ar[r] & U^0\ar[r] & \cdots\ar[r] & U^{g-2}\ar[r] & U^{g-1}\ar[r] & U^g\ar[r] & \cdots\\
\cdots \ar[r] &0\ar[r]\ar[u] & I^0\ar[r]\ar[u]^{\nu^0} & \cdots\ar[r] & I^{g-2}\ar[r]\ar[u]^{\nu^{g-2}} & I^{g-1}\ar[r]\ar[u]^{\nu^{g-1}} &I^g\ar[r]\ar[u]^{\nu^g}_{=}&\cdots\\
}\]

With this construction, $U$ is an acyclic complex of injective modules with a map of complexes $\nu:I\to U$ such that $\nu^i$ is an isomorphism for $i\geq g$.  As $I$ and $J$ were chosen minimally, it is easy to verify that $U$ is also a minimal complex.  To see this, note that because $G$ is reduced, the proof of \cite[Proposition 10.1.11]{EJ00} shows that $Z^i(U)\to U^i$ is an essential injection for $i<g$. As $R$ is a Gorenstein ring, we obtain for free that $U$ is totally acyclic, see Remark \ref{gor_acy_totacy}.  By assumption, $M\to I$ is an injective resolution, and by construction $\nu^i:I^i\to U^i$ is an isomorphism for $i\geq g$.  Further, since $I$ and $J$ were chosen minimally, $U$ is a minimal complex.  Hence $M\xrightarrow{\iota}I\xrightarrow{\nu}U$ is a minimal complete injective resolution, with $\nu^i:I^i\to U^i$ an isomorphism for $i\geq g$. 
\end{construction}

\begin{rem}\label{cir_unique_up_to_homotopy}
We could alter this construction by not requiring $I$ or $J$ to minimal; in this case, we would not require $G=\ker(\del_I^g)$ to be reduced (such a $g\leq d$ exists by \cite[Theorem 10.1.13]{EJ00}).  Following the rest of the construction through verbatim, this gives a (not necessarily minimal) complete injective resolution of $M$.
\end{rem}

\begin{prop}\label{cir_summand_of_any_other}
Let $M$ be an $R$-module.  If $U$ is a minimal complete injective resolution of $M$ and $V$ is any other complete injective resolution of $M$, then $U$ appears (up to isomorphism) as a direct summand of $V$ with a contractible complementary summand. 
\end{prop}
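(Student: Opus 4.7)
The plan is to use the existence and uniqueness (up to homotopy) of lifts of maps to complete injective resolutions together with the minimality of $U$ to produce a splitting $V \cong U \oplus C$, then argue that $C$ must be contractible.

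First I would apply Lemma~\ref{exist_unique_maps_of_cir} to $\id_M : M \to M$ twice: once with source resolution $U$ and target $V$ to produce a chain map $\alpha : U \to V$, and once with the roles reversed to produce $\beta : V \to U$. The compositions $\beta\alpha$ and $\alpha\beta$ each lift $\id_M$ through a complete injective resolution, so the uniqueness clause of Lemma~\ref{exist_unique_maps_of_cir} (comparing against $\id_U$ and $\id_V$ as alternative lifts) forces $\beta\alpha \simeq \id_U$ and $\alpha\beta \simeq \id_V$. Minimality of $U$ (Definition~\ref{defn_of_minimal_complex}) then forces $\beta\alpha$, which is a self-homotopy-equivalence of $U$, to be an honest isomorphism. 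Setting $\gamma := (\beta\alpha)^{-1}$ and $\pi := \gamma\beta : V \to U$, the equation $\pi \alpha = \id_U$ now holds strictly, so $\alpha$ is a split monomorphism in the category of chain complexes. This yields a decomposition $V \cong \alpha(U) \oplus \ker \pi \cong U \oplus C$ of complexes of injectives, where $C := \ker \pi$.

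To finish I would show $C$ is contractible. From $\gamma \beta \alpha = \id_U$ and $\beta\alpha \simeq \id_U$ one gets $\gamma \simeq \gamma\beta\alpha = \id_U$, so $\alpha\pi = \alpha\gamma\beta \simeq \alpha\beta \simeq \id_V$. Under the decomposition $V = U \oplus C$, the endomorphism $\alpha\pi$ is the projection onto the $U$-summand, so $\id_V - \alpha\pi$ is the projection onto $C$ and is null-homotopic, say $\id_V - \alpha\pi = \del h + h \del$ for some $h : V \to V$ of degree $-1$. Writing $i : C \hookrightarrow V$ and $p : V \twoheadrightarrow C$ for the inclusion and projection (so $pi = \id_C$ and $ip = \id_V - \alpha\pi$), the map $phi : C \to C$ of degree $-1$ satisfies $\del(phi) + (phi)\del = p(\del h + h\del)i = p(ip)i = (pi)(pi) = \id_C$, giving a contracting homotopy for $C$. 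The main subtlety is the modification step from $\beta$ to $\pi = \gamma\beta$: one has to verify that replacing $\beta$ with $\gamma\beta$ still yields a left homotopy inverse to $\alpha$, and this is precisely where minimality of $U$ enters a second time, via $\gamma \simeq \id_U$. Once that is in hand, the contractibility of $C$ is essentially formal.
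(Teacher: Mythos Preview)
Your argument is correct and follows essentially the same approach as the paper: produce mutually homotopy-inverse maps $\alpha:U\to V$ and $\beta:V\to U$, then use minimality of $U$ to upgrade the homotopy equivalence to a genuine splitting with contractible complement. The paper simply cites \cite[Proposition 1.7]{AM02} for the conclusion that $\alpha$ is injective, $\beta$ is surjective, $V=\im\alpha\oplus\ker\beta$, and $\ker\beta$ is contractible, whereas you spell out an equivalent version of that argument by first adjusting $\beta$ to $\pi=(\beta\alpha)^{-1}\beta$ so that $\pi\alpha=\id_U$ on the nose; one small remark is that your claim that ``minimality enters a second time'' is not quite accurate, since $\gamma\simeq\id_U$ follows formally from $\gamma(\beta\alpha)=\id_U$ and $\beta\alpha\simeq\id_U$ without any further appeal to minimality.
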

\begin{proof}
There exists homotopy inverses $\alpha:U\to V$ and $\beta:V\to U$.  The minimality of $U$ implies \cite[Proposition 1.7]{AM02} that $\alpha$ is injective, $\beta$ is surjective, $\ker \beta$ is contractible, and $V=\im\alpha\oplus \ker \beta$.
\end{proof}

\subsection{Constructing complete injective resolutions}
We now move to constructing more computationally useful complete injective resolutions of MCM modules, utilizing complete projective resolutions.

\begin{rem}\label{cpr_well_def_up_to_homotopy}
Complete projective resolutions are unique up to homotopy equivalence and a map of $R$-modules $M\to N$ induces a map (which is unique up to homotopy equivalence) between their complete projective resolutions \cite[Lemma 5.3]{AM02}.  For each MCM $R$-module $M$, choose a complete projective resolution $T\to P\to M$ and set $\CPR(M)=T$; this yields a functor $\CPR(-):\MCM(R)\to \Kac(\prj R)$.  An argument dual to Lemma \ref{cir_does_not_depend_on_choice} and Proposition \ref{CIR_functor} gives that the functor $\CPR(-)$ does not depend on the choice of complete projective resolution up to a canonical natural isomorphism.
In fact, when $R$ is Gorenstein, Buchweitz shows \cite[Theorem 4.4.1]{Buc86} that $\Omega_0(-):\Kac(\prj R) \to \sMCM(R)$ is an equivalence and it easily follows that $\CPR(-):\sMCM(R)\to \Kac(\prj R)$ gives an inverse equivalence.  
If $T\to P\to M$ is a minimal complete projective resolution, set $\cpr(M)=T\in \C(\ModR)$; then $\cpr(-)$ is a well-defined assignment of a module to a complex, since minimality of $T$ implies that it is unique up to (a non-canonical) isomorphism.  Again we caution that $\cir(-)$ is {\em not} a functor since this isomorphism is non-canonical.
\end{rem}

\begin{prop}\label{cir_hom_and_tensor}
Let $R$ be Gorenstein with $\dim(R)=d$, $D$ a minimal injective resolution for $R$, and $M$ a MCM $R$-module.  If $T\xrightarrow{\tau} P\xrightarrow{} M^*$ is a complete projective resolution of $M^*$, then $M\to \Hom_R(P,D)\xrightarrow{\Hom_R(\tau,D)}\Hom_R(T,D)$ is a complete injective resolution of $M$.  In fact, $\Hom_R(\CPR((-)^*),D)$ and $\CIR(-)$ are naturally isomorphic functors $\MCM(R) \to \Kac(\Inj R)$.  
\end{prop}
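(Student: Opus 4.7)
The plan is to verify the three defining conditions of a complete injective resolution (Definition \ref{defn_cir}) for the diagram $M \to \Hom_R(P, D) \xrightarrow{\Hom_R(\tau, D)} \Hom_R(T, D)$, and then deduce the natural isomorphism from the uniqueness of complete injective resolutions in $\K(\Inj R)$.

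First I would check that both $\Hom_R(P, D)$ and $\Hom_R(T, D)$ are complexes of injective $R$-modules: each $\Hom_R(P_i, D^j)$ is a direct summand of some product of copies of $D^j$ (since $P_i$ is projective), hence injective over the Noetherian ring $R$, and similarly for $\Hom_R(T_i, D^j)$; the total Hom in each fixed degree is a finite product of such modules because $D$ is concentrated in cohomological degrees $0, \ldots, d$. Since $D$ lives in non-negative and $P$ in non-positive cohomological degrees, $\Hom_R(P, D)$ is concentrated in non-negative degrees. To identify its cohomology, I would use that $\Hom_R(P, D)$ computes $\RHom_R(M^*, D)$ (as $P \to M^*$ is a projective resolution), that $D \simeq R$ in the derived category (Gorenstein hypothesis), and that $M^*$ is itself MCM over Gorenstein $R$, so $\Ext^{>0}_R(M^*, R) = 0$ and $\Hom_R(M^*, R) = M^{**} \cong M$. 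Thus $H^0(\Hom_R(P, D)) \cong M$ and higher cohomology vanishes, which exhibits $\Hom_R(P, D)$ as an injective resolution of $M$ and supplies the augmentation $M \to \Hom_R(P, D)$ via the identification $M \cong M^{**}$.

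For $\Hom_R(T, D)$, acyclicity follows because for each fixed $j \in \{0, \ldots, d\}$ the complex $\Hom_R(T, D^j)$ is acyclic ($T$ is acyclic and $D^j$ is injective), and $\Hom_R(T, D)$ is the totalization of a double complex with only $d+1$ non-zero rows, so a standard filtration / spectral sequence argument yields acyclicity of the total complex. Total acyclicity is then automatic by Remark \ref{gor_acy_totacy}, since $R$ is Gorenstein. Finally, $\Hom_R(\tau, D)^n$ is an isomorphism for $n \gg 0$: in degree $n$ only the factors $\Hom_R(\tau_i, D^{n-i})$ with $\max(0, n-d) \leq i \leq n$ contribute, and for $n$ sufficiently large every such $\tau_i$ is an isomorphism by the definition of complete projective resolution.

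For the naturality statement, I would observe that $\CPR(-) \circ (-)^* : \MCM(R) \to \Kac(\prj R)$ is a (covariant) functor and that $\Hom_R(-, D)$ descends to a contravariant functor $\Kac(\prj R) \to \Kac(\Inj R)$ since $D$ is a bounded complex of injectives and homotopies are preserved. Their composite $\Hom_R(\CPR((-)^*), D)$ is therefore a functor $\MCM(R) \to \Kac(\Inj R)$ that, by the first part, sends each $M$ to a complete injective resolution. By Lemma \ref{cir_does_not_depend_on_choice} and Proposition \ref{CIR_functor}, any two functorial assignments of complete injective resolutions are naturally isomorphic, which gives the desired natural isomorphism with $\CIR(-)$. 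I expect the main technical step to be the acyclicity verification for $\Hom_R(T, D)$—carefully managing the totalization of a double complex that is unbounded in the $T$-direction—together with ensuring the augmentation $M \to \Hom_R(P, D)$ coming from the evaluation isomorphism $M \cong M^{**}$ is compatible with the one implicit in Definition \ref{defn_cir}.
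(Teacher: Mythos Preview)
Your proposal is correct and follows essentially the same route as the paper: the paper verifies the three conditions of Definition~\ref{defn_cir} by showing $M\to\Hom_R(P,D)$ is a quasi-isomorphism (via $M\cong M^{**}$ and Ischebeck's $\Ext$-vanishing, which is your $\RHom_R(M^*,R)\simeq M$ statement), that $\Hom_R(T,D)$ is acyclic because $D$ is bounded, and that $\Hom_R(\tau,D)^i$ is an isomorphism for $i\geq g+d$; it then invokes Proposition~\ref{CIR_functor} for the natural isomorphism, just as you do. One small slip: $\CPR(-)\circ(-)^*$ is \emph{contravariant}, not covariant, so composing with the contravariant $\Hom_R(-,D)$ gives the covariant functor you need---your conclusion is right but the variance bookkeeping in that sentence is off.
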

\begin{proof}
Let $M$ be any MCM $R$-module and set $\CPR(M^*)=T$.  Then there exists a projective resolution $P$ such that the diagram $T\xrightarrow{\tau} P\xrightarrow{\pi} M^*$ is a complete projective resolution of $M^*$, with $\tau_i$ an isomorphism for $i\geq g$, for some fixed integer $g$.  Apply $\Hom_R(-,D)$ to this to obtain maps of complexes    
$$\Hom_R(M^*,D)\xrightarrow{\Hom(\pi,D)} \Hom_R(P,D)\xrightarrow{\Hom(\tau,D)} \Hom_R(T,D).$$ 
As $\pi$ is a quasi-isomorphism, so is $\Hom_R(\pi,D)$ by \cite[Lemma 10.7.3]{Wei94}.  Next, applying a result of Ischebeck \cite[Exercise 3.1.24]{BH98} that says in a local ring positive $\Ext$ modules vanish for a MCM module against a finitely generated module of finite injective dimension  we obtain the map $\Hom_R(M^*,R)\to \Hom_R(M^*,D)$ induced by the quasi-isomorphism $R\to D$ is also a quasi-isomorphism.  As $M$ is MCM, $M\xrightarrow{\cong}\Hom_R(M^*,R)$, so this gives $M\to \Hom_R(M^*,D)$ is a quasi-isomorphism.
Put $\iota:M\to \Hom_R(P,D)$ as the quasi-isomorphism defined by the composition of this quasi-isomorphism and $\Hom(\pi,D)$.

As $D$ is a bounded complex of injective modules and $T\in \Kac(\prj R)$, $\Hom_R(T,D)$ is an acyclic complex of injective modules.  
Also $\Hom_R(P,D)$ is a complex of injective modules such that $\Hom_R(P,D)^i=0$ for $i<0$.  As $\iota:M\to \Hom_R(P,D)$ is a quasi-isomophism, we then have that $\iota:M\to \Hom_R(P,D)$ is an injective resolution. Recall that $\tau_i$ is an isomorphism for $i\geq g$, hence $\Hom(\tau,D)^i$ is an isomorphism for $i\geq g+d$. We then have that
$$M\xrightarrow{\iota} \Hom_R(P,D)\xrightarrow{\Hom(\tau,D)}\Hom_R(T,D)$$
is a complete injective resolution of $M$. 

So, for any MCM $R$-module $M$, both $\CIR(M)$ and $\Hom_R(\CPR(M^*),D)$ are complete injective resolutions of $M$.  Proposition \ref{CIR_functor} implies $\CIR(-)$ and $\Hom_R(\CPR((-)^*),D)$ are naturally isomorphic functors $\MCM(R)\to \Kac(\Inj R)$.
\end{proof}

\begin{lem}\label{cpr_and_R_dual}
Let $R$ be a Gorenstein local ring.  Then for a MCM $R$-module $M$ with no non-zero free summands, we have 
$$(\cpr(M^*))^*\cong \Sigma^1\cpr(M),$$
in $\C(\ModR)$.
\end{lem}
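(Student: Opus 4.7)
The plan is to unpack Construction \ref{cpr_const} for both $M$ and $M^*$ and then match the two complexes directly after dualizing one of them. First I would fix minimal projective resolutions $P\to M$ and $L\to M^*$. Since $R$ is Gorenstein and $M$ is MCM, the canonical map $\zeta_M\colon M\to M^{**}$ is an isomorphism; moreover, a non-zero free summand of $M$ produces, via $(-)^*$, a non-zero free summand of $M^*$, and vice versa by reflexivity, so $M^*$ also has no non-zero free summands. Thus $\cpr(M^*)$ can be obtained from Construction \ref{cpr_const} using $L\to M^*$ and the composition $P\xrightarrow{\pi_P} M\xrightarrow{\zeta_M} M^{**}$ as a minimal projective resolution of $(M^*)^*\cong M^{**}$; by Remark \ref{cpr_well_def_up_to_homotopy}, this produces a complex isomorphic to $\cpr(M^*)$ in $\C(\ModR)$.

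In cohomological grading, the two complexes then take the form
\[
(\cpr(M))^n=\begin{cases}P_{-n}, & n\leq 0\\ L_{n-1}^*, & n\geq 1\end{cases}\qquad\text{and}\qquad (\cpr(M^*))^n=\begin{cases}L_{-n}, & n\leq 0\\ P_{n-1}^*, & n\geq 1\end{cases},
\]
with central differentials $d_{\cpr(M)}^0=\pi_L^*\circ\zeta_M\circ\pi_P$ and $d_{\cpr(M^*)}^0=\pi_P^*\circ\zeta_M^*\circ\zeta_{M^*}\circ\pi_L$. Applying $\Hom_R(-,R)$ to $\cpr(M^*)$ position by position and using the canonical isomorphisms $\zeta_{P_i}\colon P_i\xrightarrow{\cong} P_i^{**}$ (each $P_i$ is finitely generated projective), one reads off that $((\cpr(M^*))^*)^n$ and $(\Sigma^1\cpr(M))^n$ agree as graded $R$-modules for every $n\in\Z$.

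The remaining content is matching differentials. Away from the center, each differential of $(\cpr(M^*))^*$ is of the form $(\partial_{n+1}^L)^*$ (for $n\geq 0$) or $\partial_{-n-1}^P$ (for $n\leq -2$, after identifying $P^{**}$ with $P$), which agrees with the corresponding entry of $\Sigma^1\cpr(M)$ up to signs arising from the shift convention $\partial^n_{\Sigma^1 C}=-\partial^{n+1}_C$ and the standard sign on $\Hom_R(-,R)$. For the central map at $n=-1$, I would invoke the classical identity $(\zeta_A)^*\circ\zeta_{A^*}=\id_{A^*}$, which upon dualization and using that $(fg)^*=g^*f^*$ yields $(\zeta_{M^*})^*\circ\zeta_M^{**}=\id_{M^{**}}$, together with the naturality square $\pi_P^{**}\circ\zeta_{P_0}=\zeta_M\circ\pi_P$. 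A short calculation then gives
\[
(d_{\cpr(M^*)}^0)^*\circ\zeta_{P_0}=\pi_L^*\circ\zeta_M\circ\pi_P=d_{\cpr(M)}^0,
\]
matching the central differentials up to a global sign. The main (minor) obstacle is bookkeeping the signs produced by the $\Hom_R(-,R)$ and $\Sigma^1$ conventions; any residual global sign is absorbed by the standard chain automorphism of $\Sigma^1\cpr(M)$ that multiplies by $(-1)^n$ in degree $n$. This produces the claimed isomorphism in $\C(\ModR)$.
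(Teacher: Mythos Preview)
Your proposal is correct and follows essentially the same approach as the paper: fix minimal resolutions $P\to M$ and $L\to M^*$, build $\cpr(M)$ and $\cpr(M^*)$ via Construction \ref{cpr_const}, and then compare $(\cpr(M^*))^*$ with $\Sigma^1\cpr(M)$ directly. The paper's proof is considerably terser---it simply observes that $\cpr(M)$ is the concatenation of $P$ with $\Sigma^{-1}L^*$, that $\cpr(M^*)$ is the concatenation of $L$ with $\Sigma^{-1}P^*$ (since $P$ also resolves $M^{**}$), and that dualizing the latter matches the shift of the former---whereas you fill in the bookkeeping the paper omits (no free summands in $M^*$, the central differential via $(\zeta_A)^*\circ\zeta_{A^*}=\id_{A^*}$, and the sign reconciliation).
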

\begin{proof}
Let $P\to M$ and $L\to M^*$ be minimal projective resolutions.  Then $\cpr(M)$ is the concatenation of $P$ and $\Sigma^{-1}L^*$.  Since $P$ is also a minimal projective resolution of $M^{**}$, we have $\cpr(M^*)$ is the concatenation of $L$ and $\Sigma^{-1}P^*$.  Hence $((\cpr(M^*))^*)_{\geq -1}=P$ and $((\cpr(M^*)^*)_{\leq 0}=L^*$, therefore $(\cpr(M^*))^*=\Sigma^1\cpr(M)$.
\end{proof}

\begin{prop}\label{cir_using_cpr_of_M}
Let $R$ be local Gorenstein with $\dim(R)=d$, $M$ a MCM $R$-module with no nonzero free summands, and $D$ a minimal injective resolution for $R$.  Then we have isomorphisms in $\C(\ModR)$
$$\Hom_R(\cpr(M^*),D)\cong \cpr(M^*)^*\otimes_R D\cong \Sigma^1\cpr(M)\otimes_R D,$$
and therefore these all give isomorphic complete injective resolutions of $M$.
\end{prop}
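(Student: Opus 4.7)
The plan is to establish the two isomorphisms separately and then invoke Proposition \ref{cir_hom_and_tensor} to conclude that each complex is a complete injective resolution of $M$.

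For the first isomorphism $\Hom_R(\cpr(M^*),D)\cong \cpr(M^*)^*\otimes_R D$, I would use the standard evaluation map. Since $M$ is MCM with no nonzero free summands, by Construction \ref{cpr_const} the complex $\cpr(M^*)$ is a complex of finitely generated projective $R$-modules (each term is either a term of a minimal projective resolution of $M^*$ or the $R$-dual of a term in a minimal projective resolution of $M^{**}\cong M$). For any finitely generated projective module $P$ and any module $N$, the evaluation map $\ev:\Hom_R(P,R)\otimes_R N\to \Hom_R(P,N)$ sending $\phi\otimes n\mapsto (p\mapsto \phi(p)n)$ is an isomorphism, natural in both $P$ and $N$. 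Applying this pointwise and taking the direct product totalization on the $\Hom$ side and direct sum totalization on the tensor side, these agree because each term $\cpr(M^*)_i$ is finitely generated (so the hom commutes with finite direct sums coming from each cohomological degree of $D$, which is bounded); boundedness of $D$ ensures both totalizations have finite sums in each degree and agree. This produces the desired isomorphism of complexes in $\C(\ModR)$.

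For the second isomorphism $\cpr(M^*)^*\otimes_R D\cong \Sigma^1\cpr(M)\otimes_R D$, I would apply Lemma \ref{cpr_and_R_dual} directly. That lemma provides an isomorphism $(\cpr(M^*))^*\cong \Sigma^1\cpr(M)$ in $\C(\ModR)$, and tensoring both sides on the right with $D$ preserves isomorphisms of complexes, yielding the claimed identity.

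Finally, to conclude that all three complexes give isomorphic complete injective resolutions of $M$: since $\cpr(M^*)$ is (the totally acyclic part of) a complete projective resolution of $M^*$, Proposition \ref{cir_hom_and_tensor} asserts that $\Hom_R(\cpr(M^*),D)$ is the totally acyclic part of a complete injective resolution of $M$. The two isomorphisms above transport this property to the other two complexes, giving the final assertion.

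I do not expect a substantive obstacle; the only point that requires care is ensuring that the evaluation map is compatible with the sign conventions of total complexes (the direct product and direct sum totalizations of $\Hom_R(\cpr(M^*)_{-\bullet},D^\bullet)$ and $\cpr(M^*)^{-\bullet}\otimes_R D^\bullet$ coincide because $D$ is a bounded complex of injectives, so only finitely many terms in each total degree are nonzero). This is a standard check and should not cause difficulty.
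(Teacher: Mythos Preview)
Your proposal is correct and follows essentially the same approach as the paper: the first isomorphism via the evaluation map $T^*\otimes_R D\to \Hom_R(T,D)$ (which the paper attributes to \cite[Lemma 1.1]{Ish65} and \cite[proof of Theorem 4.2]{IK06} rather than spelling out), the second via Lemma~\ref{cpr_and_R_dual}, and the final assertion via Proposition~\ref{cir_hom_and_tensor}. Your explicit discussion of why the totalizations agree (boundedness of $D$) is a helpful addition that the paper leaves implicit.
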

\begin{proof}
Set $T=\cpr(M^*)$ and $S=\cpr(M)$.  By \cite[Lemma 1.1]{Ish65}, \cite[proof of Theorem 4.2]{IK06}, we can see that the map $T^*\otimes_R D\xrightarrow{\cong} \Hom_R(T,D)$ is an isomorphism, giving the first isomorphism.
The second isomorphism follows from Lemma \ref{cpr_and_R_dual}.  Proposition \ref{cir_hom_and_tensor} then shows that these all give complete injective resolutions of $M$.
\end{proof}

\begin{rem}
Although the isomorphisms in Lemma \ref{cpr_and_R_dual} and Proposition \ref{cir_using_cpr_of_M} take place in $\C(\ModR)$, these are not natural in $\C(\ModR)$.  However, after passing to $\K(\ModR)$, the isomorphisms become natural.
\end{rem}

Let $R$ be a Gorenstein ring, $M$ a MCM $R$-module.  The constructions of complete injective resolutions in Proposition \ref{cir_using_cpr_of_M} are not in general minimal, even though the complete projective resolutions are chosen minimally.  To see this, consider the following:  

\begin{example}\label{example_not_nec_min_cir}
Consider the hypersurface $R=k[[x,y]]/(x^2-y^2)$, where $k$ is any algebraically closed field of characteristic not equal to 2 (this is an $A_1$ ADE singularity, see \cite{LW12}). Let $\p=(x+y)$.  Note that this is a minimal prime ideal, since $R/\p\cong k[[x]]$ and $\height(\p)=0$.  Over this ring, we consider the MCM $R$-module defined by $M=R/\p$.  We claim that the construction of the complete injective resolution of $M$ given in Proposition \ref{cir_using_cpr_of_M} is {\em not} minimal.

Since $\dim(R)=1$, we have the minimal injective resolution of $R$ is isomorphic to $D=0\to E^0\to E^1\to 0$, where $E^i=\bigoplus_{\height(\q)=i}E(R/\q)$.  

Consider the complex
\[\xymatrix{
T=&\cdots \ar[r] & R \ar[r]^{x+y} & R \ar[r]^{x-y} &{\underbracket[0pt][0.5pt]{R}_{\mathclap{\text{ degree 0}}}} \ar[r]^{x+y} & R \ar[r] & \cdots,
}\]
where we clearly have $T\cong \Sigma^1\cpr(M)$.  We show that $T\otimes_R D$ is not a minimal complex.  As $T\otimes_R D$ is a complex of injectives, showing it is not minimal is equivalent (by Remark \ref{min_inj_same_as_zero_maps}) to showing that for some prime $\q$, and some $i\in \Z$, 
$$\Hom_{R_\q}(\kappa(\q),(T_i)_\q\otimes_{R_\q}D_\q)\to \Hom_{R_\q}(\kappa(\q),(T_{i-1})_\q\otimes_{R_\q}D_\q)$$
is not the zero map.  We consider the prime $\p=(x+y)$.  Note that $D_\p=E(R/\p)$, a complex concentrated in degree 0. So it will be enough to show that for some $i\in \Z$,
$$\Hom_{R_\p}(\kappa(\p),(T_i)_\p\otimes_{R_\p}E(R/\p))\to \Hom_{R_\p}(\kappa(\p),(T_{i-1})_\p\otimes_{R_\p}E(R/\p))$$
is not the zero map.
Localizing the map $R\xrightarrow{x-y}R$ at $\p$ gives an isomorphism $R_\p\xrightarrow{\cong}R_\p$, applying $-\otimes_{R_\p}E(R/\p)$ preserves isomorphisms, hence $R_\p\otimes_{R_\p}E(R/\p)\xrightarrow{\cong} R_\p\otimes_{R_\p} E(R/\p)$ is an isomorphism.  Furthermore, $\Hom_{R_\p}(\kappa(\p),-)$ preserves isomorphisms, hence 
 $$\Hom_{R_\p}(\kappa(\p),R_\p\otimes_{R_\p}E(R/\p))\xrightarrow{\cong} \Hom_{R_\p}(\kappa(\p),R_\p\otimes_{R_\p}E(R/\p))$$
 is an isomorphism.  Therefore $T\otimes D$ is not minimal.

\end{example}

\section{Stable local cohomology}\label{stable}
Our goal of this section is to develop a stable notion of local cohomology.
We first remark that the $\a$-torsion functor takes acyclic complexes of injectives to acyclic complexes of injectives.

\begin{lem}\label{Gamma_preserves_acy}
Let $R$ be a Noetherian commutative ring.  For an ideal $\a\subset R$, if $U\in \Kac(\Inj R)$, then $\Gamma_\a(U)\in \Kac(\Inj R)$.
\end{lem}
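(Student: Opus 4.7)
The lemma has two parts: $\Gamma_\a(U)$ is (i) a complex of injective modules and (ii) acyclic. Part (i) is immediate from the Matlis decomposition recalled in the preliminaries: for an injective $J = \bigoplus_\p E(R/\p)^{\alpha_\p}$, one has $\Gamma_\a(J) = \bigoplus_{\p \supseteq \a} E(R/\p)^{\alpha_\p}$, which is again injective. The real content is therefore acyclicity, which I would establish by reducing to the principal case.

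For $\a = (x)$ principal, I would start with an injective $J = \bigoplus_\p E(R/\p)^{\alpha_\p}$. Since each $E(R/\p)$ is $\p$-local, $E(R/\p)_x = E(R/\p)$ when $x \notin \p$ and $0$ when $x \in \p$. Hence $J \to J_x$ is a surjection onto $\bigoplus_{x \notin \p} E(R/\p)^{\alpha_\p}$ with kernel exactly $\Gamma_{(x)}(J) = \bigoplus_{x \in \p} E(R/\p)^{\alpha_\p}$, giving a short exact sequence
$$0 \to \Gamma_{(x)}(J) \to J \to J_x \to 0.$$
Applied termwise, this produces a short exact sequence of complexes $0 \to \Gamma_{(x)}(U) \to U \to U_x \to 0$. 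Since $U$ is acyclic and localization is exact, $U_x$ is acyclic, and the long exact cohomology sequence then forces $\Gamma_{(x)}(U)$ to be acyclic.

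For general $\a = (x_1, \ldots, x_n)$, I would use the identity $\Gamma_{\b_1 + \b_2} = \Gamma_{\b_1} \circ \Gamma_{\b_2}$, valid for any pair of ideals (routine check via $(\b_1 + \b_2)^{i+j} \subseteq \b_1^i + \b_2^j$), so that $\Gamma_\a = \Gamma_{(x_n)} \circ \cdots \circ \Gamma_{(x_1)}$. Iterated application of the principal case --- at each step the output remains an acyclic complex of injectives, and hence is admissible input to the next step --- then yields acyclicity of $\Gamma_\a(U)$. The main (and essentially only) obstacle is the principal case, specifically the surjectivity of $J \to J_x$ for injective $J$; once that Matlis-based computation is in hand, the rest of the argument is purely formal.
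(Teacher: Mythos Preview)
Your proof is correct and follows essentially the same approach as the paper: both arguments establish injectivity via the Matlis decomposition and prove acyclicity by induction on the number of generators of $\a$, using the termwise split short exact sequence $0 \to \Gamma_{(x)}(V) \to V \to V_x \to 0$ for an acyclic complex $V$ of injectives together with the identity $\Gamma_{(\b,x)} = \Gamma_{(x)} \circ \Gamma_\b$. The only cosmetic difference is that you isolate the principal case first and then iterate, while the paper phrases the same step as a direct induction.
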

\begin{proof}
For $U\in \Kac(\Inj R)$, the complex $\Gamma_\a(U)$ is obtained by omitting those irreducible injective modules that correspond to primes not containing $\a$, hence $\Gamma_\a(U)$ is a complex of injective modules. We need only show that $\Gamma_\a(U)$ is also acyclic.

We induct on the number of generators of $\a$.  If $\a=0$, then $\Gamma_0(U)=U$, and there's nothing to show.  
For $i>0$, assume the result holds for any ideal $\b$ generated by $i-1$ elements, i.e., $\Gamma_\b(U)\in \Kac(\Inj R)$.  Then if $\a$ can be generated by $i$ elements, we let $\b$ be the ideal generated by $i-1$ of these generators, and set $y$ to be the remaining generator of $\a$.  We then have $0\to \Gamma_y\Gamma_\b(U)\to \Gamma_\b(U)\to (\Gamma_\b(U))_y\to 0$ is degree-wise split exact, and therefore
$$0\to \Gamma_\a(U)\to \Gamma_\b(U)\to (\Gamma_\b(U))_y\to 0$$
is exact.  Since $\Gamma_\b(U)$ and $(\Gamma_\b(U))_y$ are both acyclic (the latter since localization preserves acyclicity), we obtain that $\Gamma_\a(U)$ is acyclic as well, hence $\Gamma_\a(U)\in \Kac(\Inj R)$, as desired.
\end{proof}

This immediately recovers two results of Sazeedeh:

\begin{cor}\cite[Theorem 3.2]{Saz04}
Let $R$ be a Gorenstein ring of dimension $d$ and $\a\subset R$ an ideal.  If $G\in \GInj(R)$, then $\Gamma_\a(G)\in \GInj(R)$.
\end{cor}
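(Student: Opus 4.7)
The plan is to deduce this corollary directly from Lemma \ref{Gamma_preserves_acy} together with Remark \ref{gor_acy_totacy}, which reduces Gorenstein injectivity (in a Gorenstein ring) to the existence of an acyclic resolution by injective modules.

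First I would use the characterization of Gorenstein injective modules afforded by Remark \ref{gor_acy_totacy}: since $R$ is Gorenstein, $G\in\GInj(R)$ implies that there exists an acyclic complex $U$ of injective $R$-modules with $Z^0(U)=G$, that is, $U\in\Kac(\Inj R)$. Applying Lemma \ref{Gamma_preserves_acy} to this $U$, we obtain $\Gamma_\a(U)\in\Kac(\Inj R)$; so $\Gamma_\a(U)$ is again an acyclic complex of injective modules.

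Next, I would identify $Z^0(\Gamma_\a(U))$ with $\Gamma_\a(G)$. Since $\Gamma_\a$ is a left exact functor, it preserves kernels, and therefore
\[
Z^0(\Gamma_\a(U))=\ker\bigl(\Gamma_\a(U^0)\to\Gamma_\a(U^1)\bigr)=\Gamma_\a\bigl(\ker(U^0\to U^1)\bigr)=\Gamma_\a(Z^0(U))=\Gamma_\a(G).
\]
Combined with the previous step, this exhibits $\Gamma_\a(G)$ as the zeroth cocycle of an acyclic complex of injectives, and invoking Remark \ref{gor_acy_totacy} once more (every acyclic complex of injectives over a Gorenstein ring is totally acyclic) yields $\Gamma_\a(G)\in\GInj(R)$.

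There is no real obstacle here: the lemma does all the heavy lifting, and the only thing to verify is that $\Gamma_\a$ commutes with the kernel that computes $Z^0$, which is just left exactness of the torsion functor recalled in the Preliminaries. The role of the Gorenstein hypothesis is solely to invoke the equivalence ``acyclic $\Leftrightarrow$ totally acyclic'' from Remark \ref{gor_acy_totacy} on both ends of the argument.
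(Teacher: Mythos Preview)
Your proof is correct and follows essentially the same approach as the paper: both use Lemma \ref{Gamma_preserves_acy} to show $\Gamma_\a(U)$ remains acyclic, left exactness of $\Gamma_\a$ to identify $Z^0(\Gamma_\a(U))$ with $\Gamma_\a(G)$, and the Gorenstein hypothesis (via Remark \ref{gor_acy_totacy}) to pass between acyclicity and total acyclicity. The only difference is that you make the appeals to Remark \ref{gor_acy_totacy} explicit where the paper folds them into the phrase ``by definition.''
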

\begin{proof}
Let $G$ be a Gorenstein injective $R$-module.  By definition, $G$ is the zeroth syzygy of an acyclic complex $U$ of injective modules.  Since $\Gamma_\a(-)$ is left exact, $Z^0\Gamma_\a(U)=\Gamma_\a(Z^0U)$, which coincides with $\Gamma_\a(G)$ since $\Gamma_\a(U)$ is acyclic by Lemma \ref{Gamma_preserves_acy}.  Hence again by definition, $\Gamma_\a(G)$ is Gorenstein injective.
\end{proof}

\begin{cor}\cite[Theorem 3.1]{Saz04}\label{loc_coh_of_gor_inj_is_zero_in_high_degs}
If $R$ a Gorenstein ring of dimension $d$, $G$ is a Gorenstein injective $R$-module, and $\a\subset R$ is an ideal, then $H_\a^i(G)=0$ for $i>0$.
\end{cor}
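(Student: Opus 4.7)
The plan is to read the result off directly from Lemma \ref{Gamma_preserves_acy}. Since $G$ is Gorenstein injective over the Gorenstein ring $R$, Remark \ref{gor_acy_totacy} furnishes an acyclic complex $U$ of injective $R$-modules with $Z^0(U)\cong G$. My first step is to note that the brutal truncation $U^{\geq 0}$, augmented by the inclusion $G\hookrightarrow U^0$ (coming from the identification $G=\ker \del_U^0$), is an honest injective resolution of $G$: exactness at $G$ and at $U^0$ is immediate, and exactness at $U^i$ for $i\geq 1$ is just the acyclicity of $U$ in positive degrees. Consequently, $H_\a^i(G)$ can be computed as $H^i(\Gamma_\a(U^{\geq 0}))$ for every $i\geq 0$.

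Next, I would compare this to $H^i(\Gamma_\a(U))$. For each fixed $i\geq 1$, the $i$-th cohomology depends only on the three neighboring terms in cohomological degrees $i-1,i,i+1$, all of which are $\geq 0$; these terms and the differentials between them are identical in $\Gamma_\a(U^{\geq 0})$ and in $\Gamma_\a(U)$. Therefore $H^i(\Gamma_\a(U^{\geq 0}))=H^i(\Gamma_\a(U))$ for every $i\geq 1$.

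Finally, Lemma \ref{Gamma_preserves_acy} applied to $U$ says that $\Gamma_\a(U)$ is itself acyclic (it lies in $\Kac(\Inj R)$), so its cohomology vanishes in every degree. Combining the previous two paragraphs yields $H_\a^i(G)=0$ for all $i>0$, as claimed. There is no real obstacle in this argument: essentially all of the content is already packaged inside Lemma \ref{Gamma_preserves_acy}, and the only thing one needs to verify is the straightforward bookkeeping observation that brutally truncating a totally acyclic complex of injectives at cohomological degree zero produces an injective resolution of the bottom cycle module, so that the truncated complex still computes local cohomology in positive degrees.
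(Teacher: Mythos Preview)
Your proof is correct and follows essentially the same approach as the paper: both arguments obtain an acyclic complex $U$ of injectives with $Z^0(U)=G$, use the truncation $U^{\geq 0}$ as an injective resolution of $G$, and then invoke Lemma~\ref{Gamma_preserves_acy} to conclude $H_\a^i(G)=H^i(\Gamma_\a(U^{\geq 0}))=H^i(\Gamma_\a(U))=0$ for $i>0$. You have simply spelled out in more detail the bookkeeping that the paper compresses into a single line.
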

\begin{proof}
Since $G$ is Gorenstein injective, it is the zeroth syzygy of an acyclic complex $U$ of injective modules.  Then $\Gamma_\a(G)$ is the zeroth syzygy of the acyclic (by Lemma \ref{Gamma_preserves_acy}) complex $\Gamma_\a(U)$ of injective modules.  For $i>0$, $H_\a^i(G)=H^i(\Gamma_\a(U^{\geq 0}))=H^i(\Gamma_\a(U))=0$.
\end{proof}

We now come to the main definition of this document:

\begin{defn}\label{stableloccohdefn}
Let $R$ be a Noetherian commutative ring and $M$ be an $R$-module that has a minimal complete injective resolution $M\to I\to U$. For an ideal $\a$ of $R$, we define the {\em stable local cohomology module of $M$ with respect to $\a$} as 
$$\Gstab_\a(M)=Z^0(\Gamma_\a(U))\in \ModR,$$
where $Z^0(-)$ represents taking the kernel of the map between the modules in cohomological degrees $0$ and $1$.  Evidently then $\Gstab_\a(M)$ is a Gorenstein injective $R$-module (by Lemma \ref{Gamma_preserves_acy}), and this module is unique up to a non-canonical isomorphism by the minimality of $U$.  Because each homomorphism of $R$-modules induces a homomorphism of their complete injective resolutions, which is unique up to homotopy equivalence, Remark \ref{cir_is_an_equivalence} shows that each homomorphism of $R$-modules $\phi:M\to M'$ induces a homomorphism in $\sGInj(R)$
$$\Gstab_\a(\phi):\Gstab_\a(M)\to \Gstab_\a(M'),$$
that is, $\Gstab_\a(-)$ defines a functor $\ModR\to \sGInj(R)$.
\end{defn}

\begin{rem}
Since complete injective resolutions are unique up to homotopy (Lemma \ref{exist_unique_maps_of_cir}), we can equivalently define $\Gstab_\a(M)=Z^0(\Gamma_\a(\CIR(M)))\in \sGInj(R)$, which we may do without further comment.  
\end{rem}

Here are a few basic properties of stable local cohomology: 
\begin{prop}\label{basic_slc_properties}
Let $M$ be an $R$-module that has a complete injective resolution.  Then
\begin{enumerate}
\item If $\sqrt{\a}=\sqrt{\b}$, then $\Gstab_\a(M)\cong \Gstab_\b(M)$.
\item Let $\{M_\lambda\}$ be a family of $R$-modules.  Then 
$$\Gstab_\a\left(\bigoplus_\lambda M_\lambda\right)\cong \bigoplus_\lambda \Gstab_\a(M_\lambda).$$
\item If $\id_RM<\infty$, then $\Gstab_\a(M)=0$.  Conversely, if $\Gstab_0(M)=0$, then $\id_RM<\infty$. 
\end{enumerate}
\end{prop}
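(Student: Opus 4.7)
The plan is to verify each item by unpacking the definition $\Gstab_\a(M) = Z^0(\Gamma_\a(U))$ for a minimal complete injective resolution $M \to I \to U$. All three parts reduce to elementary compatibilities between $\Gamma_\a$, direct sums, and the minimality description recorded in Remark \ref{min_inj_same_as_zero_maps}.

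For (1), the key observation is that $\Gamma_\a$ acting on injectives depends only on $V(\a)$: by Matlis, any injective decomposes as $\bigoplus_{\p \in \Spec(R)} E(R/\p)^{\alpha_\p}$, and as recalled in the preliminaries, $\Gamma_\a$ simply discards the summands indexed by primes $\p$ that do not contain $\a$. Since $\sqrt{\a} = \sqrt{\b}$ implies $V(\a) = V(\b)$, one obtains $\Gamma_\a(U) = \Gamma_\b(U)$ as complexes of injectives, and the two stable local cohomology modules agree.

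For (2), I would first observe that a pointwise direct sum of minimal complete injective resolutions of the $M_\lambda$ is a minimal complete injective resolution of $\bigoplus_\lambda M_\lambda$: over a Noetherian ring, direct sums of injectives are injective, direct sums of acyclic complexes are acyclic (and total acyclicity is inherited in the Gorenstein setting via Remark \ref{gor_acy_totacy}), and injective hulls commute with direct sums so minimality is preserved. Next, $\Gamma_\a$ commutes with arbitrary direct sums: any element of $\bigoplus_\lambda M_\lambda$ has only finitely many nonzero components and so is annihilated by a single power of $\a$ iff each component is. Since kernels also commute with direct sums, this yields $Z^0(\Gamma_\a(\bigoplus_\lambda U_\lambda)) \cong \bigoplus_\lambda Z^0(\Gamma_\a(U_\lambda))$.

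For (3), if $\id_R M < \infty$ then the minimal injective resolution $I$ is bounded, so the zero complex $U = 0$ itself serves as a (trivially minimal) totally acyclic completion: the requirement that $\nu^i \colon I^i \to U^i$ be an isomorphism for $i \gg 0$ is automatic once $I^i = 0$. Hence $\Gstab_\a(M) = Z^0(\Gamma_\a(0)) = 0$. Conversely, suppose $\Gstab_0(M) = Z^0(U) = 0$ using a minimal CIR. By Remark \ref{min_inj_same_as_zero_maps}, minimality gives $U^0 = E_R(Z^0(U)) = 0$; then acyclicity forces $Z^1(U) = \im(d^0) = 0$, so minimality again gives $U^1 = E_R(Z^1(U)) = 0$, and iterating yields $U^i = 0$ for all $i \geq 0$. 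Since $\nu^i$ is an isomorphism for $i \gg 0$, this forces $I^i = 0$ for $i \gg 0$, i.e., $\id_R M < \infty$. I expect this last propagation argument in the converse of (3) --- interleaving minimality and acyclicity to lift a vanishing at degree zero to all higher degrees --- to be the only step requiring any care; everything else is essentially formal.
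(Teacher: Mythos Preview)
Your proof is correct and follows essentially the same approach as the paper. The paper dispatches (1) and (2) by citing \cite[Proposition 7.3]{24hours} (which records that $\Gamma_\a$ depends only on $\sqrt{\a}$ and commutes with direct sums), while you unpack these facts directly; for (3) your propagation argument via minimality simply makes explicit what the paper's one-line converse (``$M$ has a minimal complete injective resolution of the form $M\to I\to 0$'') leaves to the reader.
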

\begin{proof}
(1) and (2) follow immediately from \cite[Proposition 7.3]{24hours}.

For (3), if $\id_RM<\infty$ and $M\to I$ is an injective resolution, then $M\to I\to 0$ is a minimal complete injective resolution, hence $\Gstab_\a(M)=0$.  Conversely, if $\Gstab_0(M)=0$, then $M$ has a minimal complete injective resolution of the form $M\to I\to 0$, and therefore $I^i=0$ for $i\gg 0$, so $\id_RM<\infty$.
\end{proof}

When $R\to S$ is a flat ring homomorphism, we have a change of rings result for stable local cohomology.
\begin{prop}
Let $R\to S$ be a ring homomorphism such that $S$ is flat as an $R$-module, $M$ is any $S$-module having a complete $S$-injective resolution, and $\a\subseteq R$ an ideal of $R$.  Then 
$$\Gstab_\a(M)\cong \Gstab_{\a S}(M).$$
\end{prop}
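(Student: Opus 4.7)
The plan is to exhibit a single complex that simultaneously serves as a complete injective resolution of $M$ over both $R$ and $S$, and then to observe that the $\a$-torsion and $\a S$-torsion functors agree on $S$-modules. The two ingredients are flatness of $S$ over $R$ (which transports $S$-injectivity to $R$-injectivity) and the tautology that for any $S$-module $N$ the subsets $\{x\in N\mid \a^n x=0\}$ and $\{x\in N\mid (\a S)^n x=0\}$ coincide, since $(\a S)^n = \a^n S$ and $S$ acts through the ring homomorphism.

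Concretely, first I would fix a complete $S$-injective resolution $M\xrightarrow{\iota} I\xrightarrow{\nu} U$. A standard hom-tensor adjunction argument, using flatness of $S$, shows that every $S$-injective module is $R$-injective: for any $R$-monomorphism $A\hookrightarrow B$ and any $S$-injective $J$, applying $-\otimes_R S$ (exact by flatness) and then $\Hom_S(-,J)$ (exact since $J$ is $S$-injective) yields surjectivity of $\Hom_R(B,J)\to \Hom_R(A,J)$ via the adjunction $\Hom_S(X\otimes_R S,J)\cong \Hom_R(X,J)$. Hence $I$ and $U$ are complexes of $R$-injectives, $\iota$ remains an $R$-injective resolution of $M$, and $\nu^i$ is still an isomorphism for $i\gg 0$.

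It remains to check that $U$ is totally acyclic as an $R$-complex of injectives. In the Gorenstein setting underlying this paper, Remark \ref{gor_acy_totacy} tells us that acyclicity suffices, and we already have acyclicity of $U$ on the level of underlying abelian groups. Thus $M\to I\to U$ is also a complete $R$-injective resolution of $M$, and by Lemma \ref{exist_unique_maps_of_cir} (together with Proposition \ref{CIR_functor}) we may use it in place of any other choice to compute $\Gstab_\a(M)=Z^0(\Gamma_\a(U))$. By the tautology above, $\Gamma_\a(U^i)=\Gamma_{\a S}(U^i)$ for every $i$, so the complexes $\Gamma_\a(U)$ and $\Gamma_{\a S}(U)$ coincide, and taking $Z^0$ yields $\Gstab_\a(M)\cong \Gstab_{\a S}(M)$ in $\sGInj(R)$.

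The main obstacle I anticipate is justifying total acyclicity of $U$ as an $R$-complex outside the Gorenstein regime. If one drops the Gorenstein hypothesis, one needs a separate argument that each syzygy $Z^i(U)$, known to be Gorenstein injective over $S$, remains Gorenstein injective over $R$; this would require verifying the defining Ext-vanishing against arbitrary $R$-injectives, which is not immediate from flatness since $E\otimes_R S$ is typically not $S$-injective for an $R$-injective $E$. Within the paper's standing framework, however, the Gorenstein hypothesis neutralizes this subtlety and the argument reduces to the two bookkeeping observations above.
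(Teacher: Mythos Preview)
Your argument is essentially the same as the paper's: show that a complete $S$-injective resolution of $M$ is also a complete $R$-injective resolution (using that $S$-injectives are $R$-injective by flatness), and then observe that $\Gamma_\a$ and $\Gamma_{\a S}$ agree on $S$-modules. The paper's proof is in fact considerably terser than yours---it simply asserts that the complete injective resolution over $S$ ``coincides with'' one over $R$ and concludes.

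Where you go beyond the paper is in isolating the total-acyclicity step and resolving it via Remark~\ref{gor_acy_totacy} under a Gorenstein hypothesis on $R$. This is a genuine gap in the paper's one-line proof: the proposition as stated imposes no hypothesis on $R$, yet passing $S$-total-acyclicity of $U$ to $R$-total-acyclicity is not automatic (as you note, the adjunction $\Hom_R(J',U)\cong\Hom_S(J'\otimes_R S,U)$ does not help because $J'\otimes_R S$ need not be $S$-injective). Your fix is the right one in the paper's ambient setting, and your final caveat about the non-Gorenstein case is well taken. One small point: since you are using the \emph{same} complex $U$ on both sides, the conclusion is literally an equality $\Gamma_\a(U)=\Gamma_{\a S}(U)$ of complexes, hence an equality of $Z^0$'s as $S$-modules (and in particular as $R$-modules); there is no need to descend to $\sGInj(R)$ at the end.
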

\begin{proof}
Recall that injective $S$-modules are injective as $R$-modules since $S$ is a flat $R$-module.  Then a complete injective resolution $\CIR(M)$ of $M$ as an $S$-module coincides with a complete injective resolution of $M$ as an $R$-module, and the result follows by definition of stable local cohomology.
\end{proof}

Before proceeding further, we consider a simple example.

\begin{example}
Let $R=\frac{k[[x]]}{(x^2)}$, where $k$ is any field.  Then $R$ is a hypersurface with $\dim(R)=0$, and so the projective and injective modules coincide.  Set $T$ as the complex of projective (and hence injective) modules $R$ with all maps multiplication by $x$:
$$T:= \cdots \xrightarrow{x} R\xrightarrow{x} R\xrightarrow{x} R\xrightarrow{x}\cdots.$$
Then $k\to T^{\geq 0}\to T$ is a complete injective resolution of $k$.  In fact, $T$ is minimal as in this case we have $R\cong E_R(k)$.  We notice that 
$$\Gstab_{(x)}(k)=Z^0\Gamma_{(x)}(T)=\ker(\Gamma_{(x)}(R)\xrightarrow{x}\Gamma_{(x)}(R))=\ker(R\xrightarrow{x} R)=k.$$
On the other hand, $\Gstab_{(x)}(R)=0$ since $\id_RR<\infty$.  
\end{example}

A motivation for calling this stable local cohomology is that $\Gstab_\a(-)$ is the composition of the stabilization functor $Z^0\CIR(-)$ and the $\a$-torsion functor.  Notice that $Z^0\CIR(-)$ is called the Gorenstein approximation functor in \cite{Kra05}.

\begin{rem}\label{iso_gor_inj_mods}
Recall that $\simeq$ denotes an isomorphism in the stable category $\sGInj(R)$ and $\cong$ denotes an isomorphism in $\ModR$.  For Gorenstein injective modules $M$ and $N$, we comment that $M\simeq N$ if and only if there exists (possibly zero) injective $R$-modules $J_1$ and $J_2$ such that $M\oplus J_1\cong N\oplus J_2$.  (In fact, if $M$ and $N$ are reduced Gorenstein injective modules, then $M\simeq N$ if and only if $M\cong N$.)
\end{rem}

In general if $M$ is a module over a Noetherian commutative ring having a complete injective resolution, $\Gstab_\a(M)$ can be difficult to compute.  We will therefore mainly restrict ourselves to working in a Gorenstein ring $R$ so that we may use the construction of a (minimal) complete injective resolution given earlier.  Restricting further to MCM modules with no nonzero free summands will allow us to use the more accessible minimal complete projective resolution of $M$ to obtain a complete injective resolution of $M$. 

\begin{lem}\label{Gamma_over_cpr}
Let $R$ be a commutative noetherian ring, $T$ be any complex of projectives, and $D$ any complex of $R$-modules. Then
$$T\otimes_R \Gamma_\a(D)\xrightarrow{\cong}\Gamma_\a(T\otimes_R D).$$
\end{lem}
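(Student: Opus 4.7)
The plan is to reduce the claim to a module-level statement: for any $R$-module $N$ and any projective $R$-module $P$, one has $P \otimes_R \Gamma_\a(N) \cong \Gamma_\a(P \otimes_R N)$, naturally. Once established, the lemma will follow by applying this in each bidegree of the tensor-product bicomplex defining $T \otimes_R D$.

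First I would construct the natural comparison map $\eta_{P,N}: P \otimes_R \Gamma_\a(N) \to \Gamma_\a(P \otimes_R N)$: a finite sum of pure tensors $\sum p_i \otimes x_i$ with each $x_i \in \Gamma_\a(N)$ is annihilated by a large enough common power of $\a$, so the inclusion $\Gamma_\a(N) \hookrightarrow N$, tensored with $P$, lands inside the $\a$-torsion subobject of $P \otimes_R N$. I would then verify $\eta_{P,N}$ is an isomorphism by the standard sequence of reductions: the case $P = R$ is tautological; the case of $P$ free follows because $-\otimes_R N$ commutes with all direct sums and $\Gamma_\a(-)$ does as well (each element of $\bigoplus_\lambda N_\lambda$ has finite support, hence is annihilated by a single power of $\a$ exactly when each of its coordinates is); and the case of $P$ projective follows by passing to a direct summand of a free module, using additivity of both functors in the $P$ slot.

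To pass to complexes, I would use that $T \otimes_R D$ is by convention the direct-sum totalization of the bicomplex with $(i,j)$-entry $T^i \otimes_R D^j$, and that $\Gamma_\a$ applied to a complex means $\Gamma_\a$ applied in each degree. Invoking the direct-sum commutation for $\Gamma_\a$ once more, the identification in total degree $n$ reads
$$\Gamma_\a\bigl((T \otimes_R D)^n\bigr) = \bigoplus_{i+j=n} \Gamma_\a(T^i \otimes_R D^j) \cong \bigoplus_{i+j=n} T^i \otimes_R \Gamma_\a(D^j) = (T \otimes_R \Gamma_\a(D))^n,$$
where the middle isomorphism is the module-level $\eta_{T^i,D^j}$. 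The differentials on the two totalizations are assembled from $\del_T$ and $\del_D$ via the same formula, and naturality of $\eta$ in each variable, combined with the fact that $\Gamma_\a$ is a subfunctor of the identity, guarantees that these differentials commute with the identification. The argument is entirely bookkeeping; the only nontrivial input is the commutation of $\Gamma_\a$ with arbitrary direct sums, and no genuine obstacle arises.
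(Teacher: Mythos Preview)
Your proposal is correct and follows essentially the same approach as the paper: reduce to the module-level statement via the chain $R \Rightarrow$ free (using that $\Gamma_\a$ commutes with direct sums) $\Rightarrow$ projective (as a summand), then apply this in each bidegree and totalize. If anything, you are slightly more careful than the paper in making explicit the natural comparison map and in invoking the direct-sum commutation of $\Gamma_\a$ again at the totalization step to identify $\Gamma_\a\bigl((T\otimes_R D)^n\bigr)$ with $\bigoplus_{i+j=n}\Gamma_\a(T^i\otimes_R D^j)$; the paper simply says ``totalizing yields the desired result.''
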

\begin{proof}
For a free $R$-module $F$ and any other $R$-module $M$, it is clear that $F\otimes_R \Gamma_\a(M)\xrightarrow{\cong} \Gamma_\a(F\otimes_R M)$ since $\Gamma_\a(-)$ commutes with arbitrary direct sums \cite[Proposition 7.3]{24hours}. Consequently, if $P$ is any projective $R$-module, we have $P\otimes_R \Gamma_\a(M)\xrightarrow{\cong} \Gamma_\a(P\otimes_R M)$.  For $i,j\in \Z$, $T_i$ is a projective module and $\Gamma_\a(D_j)$ an $R$-module, hence $T_i\otimes_R \Gamma_\a(D_j)\xrightarrow{\cong} \Gamma_\a(T_i\otimes_R D_j)$.  We have a map of bicomplexes $T\otimes_R \Gamma_\a(D)\xrightarrow{}\Gamma_\a(T\otimes_R D)$, which is an isomorphism in each bidegree; totalizing yields the desired result.
\end{proof}

\begin{prop}\label{computing_slc}
Let $R$ be a Gorenstein local ring of dimension $d$, $D$ a minimal injective resolution for $R$, $M$ a MCM $R$-module with no nonzero free summands, and $\a$ an ideal of $R$. If $T:=\cpr(M^*)$ and $S:=\cpr(M)$, then 
$$Z^0\Gamma_\a(T^*\otimes_R D)\cong Z^0\Gamma_\a(\Hom_R(T,D))\cong Z^1\Gamma_\a(S\otimes_R D)\cong Z^1(S\otimes \Gamma_\a(D)),$$
and all of these coincide with $\Gstab_\a(M)$ in $\sGInj(R)$. In particular,
\begin{enumerate}
\item $\Gstab_\a(M)\simeq Z^0\Gamma_\a(T^*\otimes_R D)$,
\item $\Gstab_\a(M)\simeq Z^0\Gamma_\a(\Hom_R(T,D))$,
\item $\Gstab_\a(M)\simeq Z^1\Gamma_\a(S\otimes_R D),$ and
\item $\Gstab_\a(M)\simeq Z^1(S\otimes_R \Gamma_\a(D))$.
\end{enumerate}

\end{prop}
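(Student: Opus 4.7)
The plan is to chain together the isomorphisms of Proposition \ref{cir_using_cpr_of_M} (which produces three isomorphic complete injective resolutions of $M$ in $\C(\ModR)$) with the identity of Lemma \ref{Gamma_over_cpr} (which commutes $\Gamma_\a$ past a tensor with a complex of projectives), and then to invoke Remark \ref{cir_is_an_equivalence} and Definition \ref{stableloccohdefn} to extract the final comparison to $\Gstab_\a(M)$ in $\sGInj(R)$.

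First, apply $\Gamma_\a$ to the $\C(\ModR)$-isomorphisms
$$\Hom_R(T,D) \;\cong\; T^* \otimes_R D \;\cong\; \Sigma^1 S \otimes_R D$$
of Proposition \ref{cir_using_cpr_of_M}. Since $\Gamma_\a$ is functorial on $\C(\ModR)$, it preserves these isomorphisms, and passing to $Z^0$ yields the first two isomorphisms claimed. For the third, observe that $\Sigma^1$ commutes with $\Gamma_\a$ (which acts pointwise on each degree), so
$$Z^0 \Gamma_\a(\Sigma^1 S \otimes_R D) \;\cong\; Z^0 \Sigma^1 \Gamma_\a(S \otimes_R D) \;=\; Z^1 \Gamma_\a(S \otimes_R D).$$
The fourth isomorphism is immediate from Lemma \ref{Gamma_over_cpr} applied to the complex $S$ of projectives, which gives $\Gamma_\a(S \otimes_R D) \cong S \otimes_R \Gamma_\a(D)$; taking $Z^1$ of both sides finishes the chain.

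For the final assertion, note that by Lemma \ref{Gamma_preserves_acy} each of the four complexes in question is an acyclic complex of injectives (once $\Gamma_\a$ has been applied), so their $Z^0$ or $Z^1$ is automatically Gorenstein injective. More importantly, each of $\Hom_R(T,D)$, $T^* \otimes_R D$, and $\Sigma^1 S \otimes_R D$ is a complete injective resolution of $M$ by Proposition \ref{cir_using_cpr_of_M}, so by Lemma \ref{exist_unique_maps_of_cir} they are homotopy equivalent to $\CIR(M)$; applying $\Gamma_\a$ and taking $Z^0$ (respectively $Z^1$, for the shifted version) then yields an object isomorphic to $\Gstab_\a(M)$ in $\sGInj(R)$, by Definition \ref{stableloccohdefn} and the remark following it.

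The proof is essentially a bookkeeping exercise; the only subtlety is keeping track of the shift, namely that $Z^0 \circ \Sigma^1 = Z^1$ under the sign convention on differentials fixed in the preliminaries, which is why the last two modules in the statement feature $Z^1$ rather than $Z^0$. Once this is in hand, each step is either functoriality of $\Gamma_\a$, an invocation of Lemma \ref{Gamma_over_cpr}, or the observation that the result is independent of the choice of complete injective resolution in $\sGInj(R)$.
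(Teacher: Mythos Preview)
Your proof is correct and follows essentially the same approach as the paper: you invoke Proposition \ref{cir_using_cpr_of_M} (which packages together $\Sigma^1 S\cong T^*$ and $\Hom_R(T,D)\cong T^*\otimes_R D$), then Lemma \ref{Gamma_over_cpr}, and finally the fact that any complete injective resolution computes $\Gstab_\a(M)$ in $\sGInj(R)$. The paper's proof is terser---it cites the underlying ingredients (Lemma \ref{cpr_and_R_dual}, Ishikawa, Iyengar--Krause) directly rather than the bundled Proposition \ref{cir_using_cpr_of_M}, and appeals to Proposition \ref{cir_hom_and_tensor} for item (2)---but the logical content is the same, and your explicit tracking of the shift $Z^0\circ\Sigma^1=Z^1$ is a helpful clarification.
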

\begin{proof}
The $R$-module isomorphisms follow since $\Sigma^1 S\cong T^*$ and $\Hom_R(T,D)\xrightarrow{\cong} T^*\otimes_R D$ by \cite[Lemma 1.1]{Ish65} and \cite[proof of Theorem 4.2]{IK06}, and the last isomorphism is just an application of Lemma \ref{Gamma_over_cpr}. It's therefore enough to show (2), which follows by Proposition \ref{cir_hom_and_tensor}. 
\end{proof}

\begin{notation}
Suppose $R$ is a Gorenstein ring and $M$ is an $R$-module. If $R$ is local and $T\xrightarrow{\tau}P\xrightarrow{\pi} M$ is a minimal complete projective resolution of $M$, we denote the {\em $i$-th stable syzygy} of $M$ by 
$$\stabsyz_i(M):=\coker(\tau_{i+1}:T_{i+1}\to T_i)$$
for all $i\in \Z$, and the {\em $i$-th syzygy} of $M$ by 
$$\syz_i(M):=\coker(\pi_{i+1}:P_{i+1}\to P_i)$$
for $i\geq 0$.  In this case, if $M$ is a MCM $R$-module, $\stabsyz_i(M)\simeq \syz_i(M)$ for $i\geq 0$ (isomorphic in $\sMCM(R)$).

If $R$ is not necessarily local and $M\xrightarrow{\iota} I\xrightarrow{\rho} U$ is a minimal complete injective resolution of $M$, we denote the {\em $i$-th stable cosyzygy} of $M$ by 
$$\stabcosyz^{i}(M):=\ker(\rho^i:U^i\to U^{i+1})$$
for all $i\in \Z$, and the {\em $i$-th cosygygy} of $M$ by 
$$\cosyz^{i}(M):=\ker(\iota^i:I^i\to I^{i+1})$$
for $i\geq 0$.  Here, when $M$ is a Gorenstein injective $R$-module, $\stabcosyz^i(M)\simeq \cosyz^i(M)$ for $i\geq 0$ (isomorphic in $\sGInj(R)$).

Translation functors on $\sMCM(R)$ and $\sGInj(R)$ are given by $\stabsyz_{-1}$ and $\stabcosyz^{1}$, respectively, which agree with the translation functor endowed by the equivalences 
$\xymatrix{
K_\text{ac}(\prj R)\ar@/^/[r]^{\Omega_0(-)}&\sMCM(R)\ar@/^/[l]^{\CPR(-)}
}$
and
$\xymatrix{
K_\text{ac}(\Inj R)\ar@/^/[r]^{Z^0(-)}&\sGInj(R)\ar@/^/[l]^{\CIR(-)}.
}$
In their respective stable categories, note that $\stabsyz_0(-)$ and $\stabcosyz^0(-)$ are isomorphic to the identity functors.  This agrees with the triangulation spelled out as in \cite[Theorem 4.4.1]{Buc86}, where the inverse loop functor gives the shift functor on $\sMCM(R)$, i.e., an exact triangle in $\sMCM(R)$ has the form 
$$L\to M\to N\to \stabsyz_{-1}L.$$
\end{notation}

\begin{prop}\label{shifts_in_GInj}
Let $R$ be a local Gorenstein ring.  As a functor between stable categories, $\Gstab_\a(-):\sMCM(R)\to \sGInj(R)$ is triangulated.  Furthermore, for any MCM $R$-module $M$, we have an $R$-module isomorphism
$$\Gstab_\a(\stabsyz_{-i}M)\cong \stabcosyz^i\Gstab_\a(M).$$
\end{prop}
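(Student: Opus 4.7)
The plan is to exhibit $\Gstab_\a$ as a composition of triangulated functors, so that the first assertion is immediate, and then to track shifts through this composition to obtain the second. First I would rewrite $\CIR(M)$ via Proposition \ref{cir_using_cpr_of_M} as $\Sigma^1\CPR(M)\otimes_R D$ on MCM modules (after reducing to the case where $M$ has no nonzero free summand, since such summands are killed by $\Gstab_\a$). This presents $\Gstab_\a = Z^0\circ\Gamma_\a\circ\CIR$ as the composition
$$\sMCM(R) \xrightarrow{\CPR} \Kac(\prj R) \xrightarrow{\Sigma^1(-)\otimes_R D} \Kac(\Inj R) \xrightarrow{\Gamma_\a} \Kac(\Inj R) \xrightarrow{Z^0} \sGInj(R).$$
Here $\CPR$ is the triangulated equivalence of \cite[Theorem 4.4.1]{Buc86}, $Z^0$ is the inverse equivalence of Remark \ref{cir_is_an_equivalence}, the middle tensor functor is degree-wise exact on projectives and lands in $\Kac(\Inj R)$ by Proposition \ref{cir_using_cpr_of_M}, and $\Gamma_\a$ is applied degree-wise and preserves $\Kac(\Inj R)$ by Lemma \ref{Gamma_preserves_acy}. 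Each factor is triangulated, so $\Gstab_\a$ is too.

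Next I would track shifts through this composition. Under $\CPR$ the translation $\stabsyz_{-1}$ on $\sMCM(R)$ corresponds to the complex shift $\Sigma^1$ on $\Kac(\prj R)$, and under $Z^0$ the translation $\stabcosyz^1$ on $\sGInj(R)$ corresponds to $\Sigma^1$ on $\Kac(\Inj R)$. Since both middle functors commute with $\Sigma$, iterating yields an isomorphism
$$\Gstab_\a(\stabsyz_{-i}M) \simeq Z^0\Sigma^i\bigl(\Gamma_\a(\Sigma^1\CPR(M)\otimes_R D)\bigr) \simeq \stabcosyz^i Z^0\Gamma_\a\bigl(\Sigma^1\CPR(M)\otimes_R D\bigr) \simeq \stabcosyz^i\,\Gstab_\a(M)$$
in $\sGInj(R)$.

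To upgrade this stable isomorphism to an honest $R$-module isomorphism, I would rework the argument with the minimal complete injective resolution $\cir(M)$. The key observation is that $\Gamma_\a$ merely drops those indecomposable summands $E(R/\p)$ of the terms of $\cir(M)$ with $\p\not\supseteq\a$, and so preserves the minimality criterion of Remark \ref{min_inj_same_as_zero_maps}; together with Lemma \ref{Gamma_preserves_acy}, this shows that $\Gamma_\a(\cir(\stabsyz_{-i}M))$ and $\Sigma^i\Gamma_\a(\cir(M))$ are minimal complete injective resolutions of $\Gstab_\a(\stabsyz_{-i}M)$ and $\stabcosyz^i\Gstab_\a(M)$, respectively. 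The stable isomorphism above, combined with the uniqueness of minimal complete injective resolutions on stable classes of Gorenstein injectives (Proposition \ref{cir_summand_of_any_other}), forces these two complexes to be isomorphic in $\C(\ModR)$, and applying $Z^0$ yields the desired $R$-module isomorphism. I expect the main obstacle to be verifying that $\Sigma^1(-)\otimes_R D$ is a well-defined triangulated functor $\Kac(\prj R)\to\Kac(\Inj R)$ on all of $\Kac(\prj R)$ --- one needs that tensoring with the bounded injective complex $D$ preserves acyclicity of totally acyclic complexes of projectives --- after which the rest is a diagram chase.
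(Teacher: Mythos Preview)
Your approach is correct and matches the paper's strategy: factor $\Gstab_\a$ as a composition of triangulated functors and then upgrade the resulting stable isomorphism; the paper uses the equivalent presentation $\CIR(-)\simeq\Hom_R(\CPR((-)^*),D)$ from Proposition~\ref{cir_hom_and_tensor} (so its factorization runs through $(-)^*$, $\CPR$, and $\Hom_R(-,D)$) rather than your tensor form $\Sigma^1\CPR(-)\otimes_R D$, but these are interchangeable by Proposition~\ref{cir_using_cpr_of_M}. For the $R$-module isomorphism the paper takes a one-line shortcut---both sides are reduced Gorenstein injectives, so Remark~\ref{iso_gor_inj_mods} applies---and your minimality argument for $\Gamma_\a(\cir(-))$ is exactly what justifies that reducedness.
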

\begin{proof}
As $\Gstab_\a(-)\simeq Z^0\Gamma_\a(\CIR(-))$, it's enough to show $Z^0(-):\Kac(\Inj R)\to \sGInj(R)$, $\Gamma_\a(-):\Kac(\Inj R)\to \Kac(\Inj R)$, and $\CIR(-):\sMCM(R)\to \Kac(\Inj R)$ are triangulated functors.  The first two functors are triangulated by \cite{Ste14,Hap88} and \cite[1.5.2]{Lip09}, respectively.  Recall that $\CIR(-)$ is naturally isomorphic to $\Hom_R(\CPR((-)^*),D)$ (by Proposition \ref{cir_hom_and_tensor}), where $D$ is a minimal injective resolution for $R$.  Note that $(-)^*$ and $\Hom_R(-,D)$ are triangulated by  \cite[1.5.2 and 1.5.3]{Lip09}, resp.), and by \cite[Theorem 4.4.1]{Buc86} we have $\CPR(-)$ is triangulated.  Composing all of these pieces shows that $\Gstab_\a(-):\sMCM(R)\to \sGInj(R)$ is a triangulated functor.

For a MCM $R$-module $M$, this then gives for any $i\in \Z$ that $\Gstab_\a(\stabsyz_{-i}M)\simeq \stabcosyz^i\Gstab_\a(M),$ and as both of these modules are reduced, by Remark \ref{iso_gor_inj_mods} we can conclude they are isomorphic as $R$-modules.
\end{proof}

\begin{rem}
Recall that an equivalent way of defining (classical) local cohomology is as a direct limit. We have a natural isomorphism \cite[Theorem 7.8]{24hours}:
$$H_\a^i(M)\cong \dirlim\Ext_R^i(R/\a^n,M).$$
It is natural to ask then why we would not define stable local cohomology in an analogous way, i.e., as $\dirlim \widehat{\Ext}_R^i(R/\a^n,M),$ or whether this is naturally isomorphic to the construction above.  Quite simply, it's not; furthermore for an $R$-module $M$ that has a complete injective resolution $U$,
$$\dirlim \widehat{\Ext}_R^i(R/\a^n,M)=0$$
for all $i\in \Z$. Using the fact that $H^i(-)$ commutes with filtered limits, see \cite[Theorem 4.33 and following comments]{24hours}, we then have
\begin{align*}
\dirlim \widehat{\Ext}_R^i(R/\a^n,M)&\cong\dirlim H^i\Hom_R(R/\a^n,U)\text{, by \cite[Theorem 5.4]{CJ14},}\\
&\cong H^i\dirlim \Hom_R(R/\a^n,U)\\
&\cong H^i\Gamma_\a(U)\\
&=0,
\end{align*}
where the last equality follows because $\Gamma_\a(U)$ is acyclic (Lemma \ref{Gamma_preserves_acy}).
\end{rem}

We now examine some of the special cases of Definition \ref{stableloccohdefn}, which may shed some light on why this seems to be the best approach for such a definition.  We will end the section with some relations among stable local cohomology modules that reflect analogous results in (classical) local cohomology. 

\subsection{Stable local cohomology at the maximal ideal}
We consider first the extremal case of $\Gstab_\m(-)$, where $\m$ is the maximal ideal of the $d$-dimensional local Gorenstein ring $(R,\m)$.  Recall that in this case, for a MCM $R$-module $M$, $H_\m^d(M)\cong M\otimes_R H_\m^d(R)\cong M\otimes E_R(R/\m)$, and all other local cohomology modules vanish.  In this case $H_\m^d(M)$ is a Gorenstein injective module \cite{Saz04}, and so is already stable in the sense we are looking for. Since $H_\m^d(M)$ comes to us in degree $d$, we would therefore expect $H_\m^d(M)$ to coincide with $\stabcosyz^{d}\Gstab_\m(M)$ (in $\sGInj(R)$).  

We first find a more explicit computation for $\Gstab_\m(M)$, for $M\in \sMCM(R)$.

\begin{prop}\label{slc_at_m}
Let $R$ be a Gorenstein local ring of dimension $d$ and $M\in \sMCM(R)$.  Then for $i\in \Z$,
$$\Gstab_\m(\stabsyz_{-i}M)\simeq \stabsyz_{d-i}M\otimes E(R/\m).$$
In particular, $\Gstab_\m(M)\simeq \stabsyz_dM\otimes E(R/\m)$.
\end{prop}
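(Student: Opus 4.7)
My plan is to establish the ``In particular'' statement $\Gstab_\m(M)\simeq \stabsyz_dM\otimes E(R/\m)$ first, and then deduce the general assertion by replacing $M$ with $\stabsyz_{-i}M$.

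The first step is to pin down $\Gamma_\m(D)$ explicitly. Since $R$ is Gorenstein of dimension $d$, Matlis decomposition gives $D^j=\bigoplus_{\height(\p)=j}E(R/\p)$, and $\Gamma_\m(E(R/\p))$ is nonzero only when $\p=\m$, which occurs only in cohomological degree $d$. Thus $\Gamma_\m(D)\cong \Sigma^{-d}E(R/\m)$ as complexes, concentrated in a single cohomological degree.

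Next, to compute $\Gstab_\m(M)$, I would set $S=\cpr(M)$ and invoke Proposition \ref{computing_slc}(4), which gives $\Gstab_\m(M)\simeq Z^1(S\otimes_R\Gamma_\m(D))$. By the previous paragraph, the complex $S\otimes_R\Gamma_\m(D)$ has $n$-th cohomological component $S_{d-n}\otimes_R E(R/\m)$, with differential induced by $\del^S$. This complex is acyclic by Lemma \ref{Gamma_preserves_acy}, so
$$Z^1(S\otimes_R\Gamma_\m(D))=\ker\bigl(S_{d-1}\otimes E(R/\m)\to S_{d-2}\otimes E(R/\m)\bigr)\cong \coker\bigl(S_{d+1}\otimes E(R/\m)\to S_d\otimes E(R/\m)\bigr).$$
Since $-\otimes_R E(R/\m)$ is right exact, this last cokernel equals $\coker(S_{d+1}\to S_d)\otimes E(R/\m)=\stabsyz_dM\otimes E(R/\m)$, completing the particular case.

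For the general statement, I would apply the particular case to $\stabsyz_{-i}M\in\sMCM(R)$ in place of $M$. Since stable syzygies compose as $\stabsyz_d\circ\stabsyz_{-i}\simeq \stabsyz_{d-i}$ in $\sMCM(R)$, and any stable isomorphism in $\sMCM(R)$ descends to a stable isomorphism in $\sGInj(R)$ after tensoring with $E(R/\m)$ (because projectives tensored with $E(R/\m)$ are direct sums of copies of $E(R/\m)$, hence injective), this immediately yields $\Gstab_\m(\stabsyz_{-i}M)\simeq \stabsyz_{d-i}M\otimes E(R/\m)$. Alternatively one could route through Proposition \ref{shifts_in_GInj}, using $\Gstab_\m(\stabsyz_{-i}M)\cong \stabcosyz^i\Gstab_\m(M)$ and checking that $\stabcosyz^i(\stabsyz_dM\otimes E(R/\m))\simeq \stabsyz_{d-i}M\otimes E(R/\m)$ via the same acyclic-complex argument.

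The main obstacle will be the bookkeeping of cohomological versus homological indexing in the middle step: tracking how the homologically graded complex $\cpr(M)$ combines with the cohomologically graded, single-degree complex $\Gamma_\m(D)$ to place the kernel computed by $Z^1$ at exactly $S_{d-1}\otimes E(R/\m)$, and then using acyclicity to transfer that kernel to a cokernel of a map between free modules, where right-exactness of the tensor product applies cleanly.
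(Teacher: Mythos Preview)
Your proposal is correct and follows essentially the same route as the paper's proof: both set $S=\cpr(M)$, invoke Proposition~\ref{computing_slc} to identify $\Gstab_\m(M)$ with $Z^1(S\otimes_R\Gamma_\m(D))$, compute that $\Gamma_\m(D)$ is $E(R/\m)$ concentrated in degree $d$, and then reduce the general case to the particular one via $\stabsyz_d\stabsyz_{-i}\simeq\stabsyz_{d-i}$. You are simply more explicit than the paper about the kernel-to-cokernel transfer (acyclicity plus right exactness of $-\otimes E(R/\m)$), which the paper's proof absorbs into a single $\simeq$; and you cite part~(4) of Proposition~\ref{computing_slc} rather than part~(3), an immaterial difference since the two are isomorphic by Lemma~\ref{Gamma_over_cpr}.
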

\begin{proof}
Let $M\in \sMCM(R)$ and $\cpr(M)=S$.  By part (3) of Proposition \ref{computing_slc},
\begin{align*}
\Gstab_\m(M)&\simeq Z^{1}(S\otimes_R \Gamma_\m(D))\\
&\cong Z^{1}(\cdots \to \underbrace{S_{d-1}\otimes E(R/\m)}_{\text{degree $1$}} \to \underbrace{S_{d-2} \otimes E(R/\m)}_{\text{degree $2$}}\to \cdots)\\
&\simeq \stabsyz_dM\otimes E(R/\m).
\end{align*}
Then just remark that for $i\in \Z$, $\stabsyz_d\stabsyz_{-i}(M)\cong \stabsyz_{d-i}(M)$, so
$$\Gstab_\m(\stabsyz_{-i}M)\simeq \stabsyz_d\stabsyz_{-i}M\otimes E(R/\m)\cong \stabsyz_{d-i}M\otimes E(R/\m).$$
\end{proof}

\begin{rem}
Now it's easy to see that $\stabcosyz^d\Gstab_\m(M)$ and $H_\m^d(M)$ agree in the above setting.  Let $M\in \sMCM(R)$.  Then 
\begin{align*}
\stabcosyz^d\Gstab_\m(M)&\cong \Gstab_\m(\stabcosyz^{-d}M)\text{, by Proposition \ref{shifts_in_GInj},}\\
&\simeq \stabsyz_{d-d}M\otimes E(R/\m)\text{, by Proposition \ref{slc_at_m},}\\
&\simeq M\otimes E(R/\m)\\
&\cong H_\m^d(M),
\end{align*}
so stable and classical local cohomology do indeed coincide in $\sGInj(R)$ in this situation (as well as in more generality, see ahead to Corollary \ref{slc_and_classical_coincide}).  In fact, $\Gstab_\a(M)$ and $H_\m^d(M)$ are isomorphic as $R$-modules if $H_\m^d(M)$ is reduced.
\end{rem}

\subsection{Stable local cohomology at a height $d-1$ prime ideal}
Let $(R,\m )$ be a Gorenstein local ring of dimension $d$, with $\q $ a prime ideal of height $d-1$.  Let $M\in \sMCM(R)$.  In what follows, $\otimes=\otimes_R$, unless otherwise specified.  Let $T=\cpr(M)$.  By Proposition \ref{computing_slc} and Lemma \ref{Gamma_over_cpr}, we have that $\Gstab_\q(M)\simeq Z^{1}(T\otimes \Gamma_\q (D))$, where $D$ is a minimal injective resolution for $R$.  Since $T\otimes D$ is not necessarily a minimal complete injective resolution for $M$ (see Example \ref{example_not_nec_min_cir}), we will only consider $\Gstab_\q(M)\in \sGInj(R)$.  As $R$ is Gorenstein, we have $\Gamma_\q(D)\cong (\cdots0 \to E(R/\q )\xrightarrow{\del} E(R/\m)\to 0\to \cdots)$, concentrated in degrees $d-1$ and $d$, with differential induced by that of $D$.
Set $\tau$ as the differential on $T$.
Then we have $T\otimes \Gamma_\q (D)$ is the direct sum totalization of the following (commutative) double complex:
\[\xymatrix{
& 0\ar[d] & 0\ar[d] & 0\ar[d] &\\
\cdots\ar[r]&T_{d-1}\otimes E(R/\q )\ar[r]^{\tau_{d-1}\otimes1}\ar[d]^{1\otimes \del} &  T_{d-2}\otimes E(R/\q )\ar[r]^{\tau_{d-2}\otimes 1}\ar[d]^{1\otimes \del} & T_{d-3}\otimes E(R/\q )\ar[d]^{1\otimes \del} \ar[r] & \cdots\\
\cdots\ar[r]&T_{d-1}\otimes E(R/\m)\ar[r]^{\tau_{d-1}\otimes 1}\ar[d] &  T_{d-2}\otimes E(R/\m)\ar[r]^{\tau_{d-2}\otimes1}\ar[d] & T_{d-3}\otimes E(R/\m)\ar[d] \ar[r] & \cdots\\
& 0 & 0 & 0 &
}\]
Note that $T_i$ lives in cohomological degree $-i$, $E(R/\q )$ in degree $d-1$ and $E(R/\m)$ in degree $d$.  So we get that $T\otimes \Gamma_\q(D)=$
$$ \cdots \to \underbrace{\begin{matrix}T_{d}\otimes E(R/\m )\\ \oplus \\ T_{d-1}\otimes E(R/\q)\end{matrix}}_{\text{degree $0$}}\xrightarrow{\bigl(\begin{smallmatrix}
\tau_{d}\otimes 1&1\otimes \del\\ 0&\tau_{d-1}\otimes 1
\end{smallmatrix} \bigr)}\underbrace{\begin{matrix}T_{d-1}\otimes E(R/\m )\\ \oplus \\ T_{d-2}\otimes E(R/\q)\end{matrix}}_{\text{degree $1$}}\xrightarrow{\bigl(\begin{smallmatrix}
\tau_{d-1}\otimes 1&1\otimes \del\\ 0&\tau_{d-2}\otimes 1
\end{smallmatrix} \bigr)} \underbrace{\begin{matrix}T_{d-2}\otimes E(R/\m )\\ \oplus\\  T_{d-3}\otimes E(R/\q)\end{matrix}}_{\text{degree $2$}}\to \cdots$$
Hence we have (with $\simeq$ representing isomorphism in $\sGInj(R)$)
$$\Gstab_\q (M)\simeq \ker\left(\underbrace{\begin{matrix}T_{d-1}\otimes E(R/\m )\\ \oplus \\ T_{d-2}\otimes E(R/\q)\end{matrix}}_{\text{degree $1$}}\to \underbrace{\begin{matrix}T_{d-2}\otimes E(R/\m )\\ \oplus\\  T_{d-3}\otimes E(R/\q)\end{matrix}}_{\text{degree $2$}}\right),$$
and also a commuting diagram with exact rows:
\[\xymatrix{
0\ar[r] & T_{d-1}\otimes E(R/\m)\ar@{^(_->}[r]^{\bigl(\begin{smallmatrix}1 \\ 0\end{smallmatrix}\bigr)}\ar[dd]^{\tau_{d-1}\otimes 1} & {\begin{matrix}T_{d-1}\otimes E(R/\m)\\  \oplus \\ T_{d-2}\otimes E(R/\q )\end{matrix}} \ar@{->>}[r]^{\bigl(\begin{smallmatrix}0 & 1\end{smallmatrix}\bigr)} \ar[dd]^{\bigl(\begin{smallmatrix}
\tau_{d-1}\otimes 1&1\otimes \del\\ 0&\tau_{d-2}\otimes 1
\end{smallmatrix} \bigr)} & T_{d-2}\otimes E(R/\q )\ar[dd]^{\tau_{d-2}\otimes1} \ar[r] &0\\
\\
0\ar[r] & T_{d-2}\otimes E(R/\m)\ar@{^(_->}[r]^{\bigl(\begin{smallmatrix}1 \\ 0\end{smallmatrix}\bigr)} & {\begin{matrix}T_{d-2}\otimes E(R/\m) \\ \oplus\\ T_{d-3}\otimes E(R/\q )\end{matrix}}\ar@{->>}[r]^{\bigl(\begin{smallmatrix}0 & 1\end{smallmatrix}\bigr)} & T_{d-3}\otimes E(R/\q )\ar[r] &0
}\]
The snake lemma then provides an exact sequence relating the kernels and cokernels.  For any injective module $E$, by \cite[Lemma 4.5]{Mur13}, we have the kernel of $T_{d-i}\otimes E\to T_{d-i-1}\otimes E$ is $\stabsyz_{d-i+1}M\otimes E$ and the cokernel of the same map is $\stabsyz_{d-i-1}M\otimes E$.  But then note that the connecting map in the above snake diagram is zero, hence we have an induced short exact sequence of $R$-modules:  
\begin{align}\label{snake_kernels_Rmod}
0\to \stabsyz_{d}M\otimes E(R/\m) \to \ker\begin{pmatrix}
\tau_{d-1}\otimes 1&1\otimes \del\\ 0&\tau_{d-2}\otimes 1
\end{pmatrix}\to \stabsyz_{d-1}M\otimes E(R/\q )\to 0
\end{align}
(where these $R$-modules are occurring as the kernels of the vertical maps above).  

In $\sGInj(R)$, the short exact sequence \ref{snake_kernels_Rmod} of $R$-modules induces a distinguished triangle:
\begin{align}\label{snake_kernels}
\stabsyz_{d}M\otimes E(R/\m) \to \Gstab_\q (M)\to \stabsyz_{d-1}M\otimes E(R/\q )\to \stabcosyz^1(\stabsyz_{d}M\otimes E(R/\m)).
\end{align}

\begin{lem}\label{conversion_to_maximal_ideal}
Using notation from above, we have the following isomorphism in $\sGInj(R)$:
$$\stabsyz_{d-1}M\otimes E_R(R/q)\simeq \Gstab_\q(M_\q).$$
\end{lem}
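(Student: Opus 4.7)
The plan is to reduce the assertion to an application of Proposition A (i.e., Proposition \ref{slc_at_m}) over the localized ring $R_\q$, for which $\q R_\q$ is the maximal ideal and $\dim R_\q=d-1$. Since $R\to R_\q$ is flat, the base change result (the proposition following Proposition \ref{basic_slc_properties}) gives an $R$-module isomorphism
$$\Gstab_\q(M_\q)\cong \Gstab_{\q R_\q}(M_\q),$$
where the right-hand side is computed using an $R_\q$-complete injective resolution of $M_\q$.

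Next, since $R$ is Gorenstein and $M$ is a MCM $R$-module, $M_\q$ is a MCM $R_\q$-module (hence an object of $\sMCM(R_\q)$), so Proposition \ref{slc_at_m} applied to the Gorenstein local ring $R_\q$ of dimension $d-1$ yields
$$\Gstab_{\q R_\q}(M_\q)\simeq \stabsyz_{d-1}^{R_\q}(M_\q)\otimes_{R_\q}E_{R_\q}(R_\q/\q R_\q)$$
in $\sGInj(R_\q)$. Two further identifications close the argument. First, localization is exact and sends projectives to projectives, so if $T\to P\to M$ is a complete projective resolution over $R$, then $T_\q\to P_\q\to M_\q$ is a complete projective resolution of $M_\q$ over $R_\q$ (not necessarily minimal, but this does not affect the stable syzygy up to isomorphism in $\sMCM(R_\q)$); hence $\stabsyz_{d-1}^{R_\q}(M_\q)\simeq (\stabsyz_{d-1}M)_\q$. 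Second, $E_R(R/\q)$ is $\q$-local, so it is canonically an $R_\q$-module and is identified with $E_{R_\q}(R_\q/\q R_\q)$; therefore
$$(\stabsyz_{d-1}M)_\q\otimes_{R_\q}E_{R_\q}(R_\q/\q R_\q)\cong \stabsyz_{d-1}M\otimes_{R}E_R(R/\q).$$
Chaining these yields the desired isomorphism in $\sGInj(R)$.

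The main point to be careful with is that the isomorphisms in the chain live in a mixture of categories (strict $R$-module isomorphisms from base change and localization, versus isomorphisms in $\sGInj(R_\q)$ from Proposition \ref{slc_at_m}), but since every $R_\q$-module is an $R$-module and injective $R_\q$-modules remain injective over $R$, an isomorphism in $\sGInj(R_\q)$ descends to one in $\sGInj(R)$, which is all that is claimed. The only other routine verification is that $M_\q$ is indeed MCM over $R_\q$ when $M_\q\neq 0$ (which follows from CM-ness), and the case $M_\q=0$ is trivial on both sides since $E_R(R/\q)$ is $\q$-local.
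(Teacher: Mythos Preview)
Your proof is correct and follows essentially the same route as the paper: both reduce to Proposition \ref{slc_at_m} over the $(d-1)$-dimensional Gorenstein local ring $R_\q$, using that $E_R(R/\q)$ is $\q$-local and that localization commutes with stable syzygies. The only cosmetic difference is that the paper runs the chain of isomorphisms in the opposite direction and treats $\Gstab_\q(M_\q)$ as computed over $R_\q$ by notational convention, whereas you invoke the flat base change proposition explicitly; your version is slightly more careful on this point.
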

\begin{proof}
Recall that $E_R(R/\q)$ is $\q$-local, and so $E_R(R/\q)\cong E_R(R/\q)_\q\cong E_{R_\q}(R_\q/\q R_\q)$, and so we have
\begin{align*}
\stabsyz_{d-1}M\otimes_R E_R(R/\q) & \cong \stabsyz_{d-1}M\otimes_R R_\q\otimes_{R_\q} E_{R_\q}(R_\q/\q R_\q)\\
&\simeq \stabsyz_{d-1}M_\q \otimes_{R_\q} E_{R_\q}(R_\q / \q R_\q)\\
&\simeq \Gstab_{\q R_\q}(M_\q),
\end{align*}
where the last isomorphism in $\sGInj(R)$ comes from applying Proposition \ref{slc_at_m} to the $(d-1)$-dimensional Gorenstein local ring $(R_\q,\q R_\q)$.  Notationally we usually just write this as $\Gstab_\q(M_\q)$ with the ideal $\q$ here understood to be taken as an ideal of $R_\q$ and $M_\q$ considered as an $R_\q$-module.
\end{proof}

\begin{prop}\label{d_1_prime_slc_relations}
Let $(R,\m )$ be a Gorenstein local ring of dimension $d$, with $\q $ a prime of height $d-1$.  Let $M\in \sMCM(R)$.  Then there exists a distinguished triangle in $\sGInj(R)$:
$$\Gstab_\m (M)\to \Gstab_\q (M)\to \Gstab_{\q }(M_\q )\to \stabcosyz^1\Gstab_\m(M).$$
\end{prop}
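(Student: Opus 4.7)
The plan is to simply identify the terms in the distinguished triangle \eqref{snake_kernels} that was already derived in the discussion preceding the proposition. Recall that the setup gave, via the snake lemma applied to the natural short exact sequence of columns in the double complex computing $T \otimes \Gamma_\q(D)$, the short exact sequence of $R$-modules
$$0 \to \stabsyz_d M \otimes E(R/\m) \to \Gstab_\q(M) \to \stabsyz_{d-1} M \otimes E(R/\q) \to 0,$$
and hence a distinguished triangle
$$\stabsyz_d M \otimes E(R/\m) \to \Gstab_\q(M) \to \stabsyz_{d-1} M \otimes E(R/\q) \to \stabcosyz^1(\stabsyz_d M \otimes E(R/\m))$$
in $\sGInj(R)$.

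The remaining work is to rewrite the outer terms of this triangle in the desired form. For the leftmost term, Proposition \ref{slc_at_m} gives an isomorphism $\stabsyz_d M \otimes E(R/\m) \simeq \Gstab_\m(M)$ in $\sGInj(R)$. For the middle-right term, Lemma \ref{conversion_to_maximal_ideal} gives $\stabsyz_{d-1} M \otimes E(R/\q) \simeq \Gstab_\q(M_\q)$ in $\sGInj(R)$. Substituting these two identifications into \eqref{snake_kernels} yields the distinguished triangle
$$\Gstab_\m(M) \to \Gstab_\q(M) \to \Gstab_\q(M_\q) \to \stabcosyz^1 \Gstab_\m(M),$$
as desired.

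There is no substantial obstacle here; the argument is entirely bookkeeping once the double-complex filtration calculation preceding the proposition has been carried out. The only subtlety is making sure the connecting map in the snake-lemma application is indeed zero (which was already observed in the derivation of \eqref{snake_kernels_Rmod} using that the induced map on kernels and cokernels of $\tau_{d-1} \otimes 1$ and $\tau_{d-2} \otimes 1$ vanishes by the description of these as $\stabsyz_\bullet M \otimes E$), and that the identifications from Proposition \ref{slc_at_m} and Lemma \ref{conversion_to_maximal_ideal} are compatible with the triangulated structure — which follows because both are given by naturally induced isomorphisms in $\sGInj(R)$, and the triangulated functor $\stabcosyz^1$ respects these.
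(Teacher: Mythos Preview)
Your proof is correct and follows exactly the same approach as the paper's own proof: apply Proposition \ref{slc_at_m} and Lemma \ref{conversion_to_maximal_ideal} to identify the outer terms of the distinguished triangle \eqref{snake_kernels}. The additional remarks you make about the vanishing connecting map and compatibility with the triangulated structure are already handled in the discussion preceding the proposition, so no further justification is needed.
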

\begin{proof} 
Apply Proposition \ref{slc_at_m} and Lemma \ref{conversion_to_maximal_ideal} to the distinguished triangle \ref{snake_kernels} to obtain the result. 
\end{proof}

\subsection{Short exact sequence in stable local cohomology}
We now obtain a short exact sequence in stable local cohomology relating $\Gstab_\a(-)$ and $\Gstab_{(\a,x)}(-)$ where $\a$ is any ideal and $x\in R$ any element.  

\begin{rem}\label{localization_preserves_min_inj_res}
Localization preserves injective (and hence also complete injective) resolutions \cite[Corollary 1.3]{Bas62}.
\end{rem}

\begin{prop}\label{ses_of_slc_1}
Let $R$ be a Gorenstein ring of dimension $d$, $M$ any $R$-module, $\a$ any ideal of $R$ and $x\in R$ any element.  Set $\b=(\a,x)$. Then there exists a short exact sequence of $R$-modules
$$0\to \Gstab_\b(M) \to \Gstab_\a(M)\to \Gstab_\a(M_x) \to 0.$$
\end{prop}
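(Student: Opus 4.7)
The plan is to lift the standard short exact sequence of functors $0 \to \Gamma_{(x)} \to \id \to (-)_x \to 0$ (which is exact on the category of injective $R$-modules) to a short exact sequence of acyclic complexes of injectives associated to $M$, and then take $Z^0$ in degree zero.

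First, I would fix a complete injective resolution $M \to I \to U$ of $M$ and verify that $M_x \to I_x \to U_x$ is a complete injective resolution of $M_x$ as an $R$-module. Indeed, each indecomposable injective $E(R/\p)$ is either annihilated or preserved by localization at $x$, so any injective $R$-module localizes to an injective $R$-module; localization preserves acyclicity and injective resolutions (Remark~\ref{localization_preserves_min_inj_res}); and in a Gorenstein ring any acyclic complex of injectives is totally acyclic (Remark~\ref{gor_acy_totacy}).

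Second, using the Matlis decomposition, a direct case analysis on indecomposable injectives $E(R/\p)$ shows that for every injective $R$-module $J$ there is a natural short exact sequence $0 \to \Gamma_\b(J) \to \Gamma_\a(J) \to \Gamma_\a(J_x) \to 0$. Applied degree-wise to $U$, this yields a short exact sequence of complexes of injectives
$$0 \to \Gamma_\b(U) \to \Gamma_\a(U) \to \Gamma_\a(U_x) \to 0,$$
each of whose terms is acyclic by Lemma~\ref{Gamma_preserves_acy} together with the previous step. By definition of stable local cohomology (and, for the third term, the fact that $U_x$ is a complete injective resolution of $M_x$), taking $Z^0$ of these three complexes yields modules isomorphic to $\Gstab_\b(M)$, $\Gstab_\a(M)$, and $\Gstab_\a(M_x)$ respectively.

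The main -- and only non-routine -- step is to check that $Z^0$ applied to this short exact sequence of complexes is again short exact. Left exactness is automatic since $Z^0$ is a kernel functor. For surjectivity of $Z^0(\Gamma_\a(U)) \to Z^0(\Gamma_\a(U_x))$, I would use acyclicity of $\Gamma_\b(U)$: given a cocycle $c \in Z^0(\Gamma_\a(U_x))$, lift it to some $b \in \Gamma_\a(U)^0$; then $d(b)$ maps to $d(c) = 0$ downstairs and hence lies in $\Gamma_\b(U)^1$; as $d(b)$ is a cocycle there, acyclicity of $\Gamma_\b(U)$ yields $d(b) = d(a)$ for some $a \in \Gamma_\b(U)^0$; then $b - a \in Z^0(\Gamma_\a(U))$ maps to $c$. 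This acyclicity-based lifting is the main point; the rest of the argument is an assembly of the framework already developed earlier in the paper.
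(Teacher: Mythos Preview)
Your proposal is correct and follows essentially the same approach as the paper: both obtain the degree-wise split short exact sequence $0\to\Gamma_\b(U)\to\Gamma_\a(U)\to\Gamma_\a(U_x)\to 0$ (the paper by applying $\Gamma_\a$ to $0\to\Gamma_x(U)\to U\to U_x\to 0$ and using $\Gamma_\a\Gamma_x=\Gamma_\b$, you by direct case analysis on primes), and both use acyclicity of $\Gamma_\b(U)$ to get exactness on the right after passing to $Z^0$ (the paper phrases this as ``truncate at $0$ and take cohomology,'' which is precisely the long-exact-sequence version of your diagram chase). One small point: since the statement asks for a short exact sequence of $R$-modules rather than merely an exact triangle in $\sGInj(R)$, you should take $U$ to be a \emph{minimal} complete injective resolution and note, as the paper does, that $U_x$ then remains minimal.
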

\begin{proof}
Choose a minimal complete injective resolution $M\to I\to U$ of $M$.  We then have an exact sequence of complexes (see remarks in \cite{HT07} before Theorem 3.2):
$$0\to \Gamma_x(U)\to U\to U_x\to 0.$$
Applying $\Gamma_\a(-)$, truncating the resulting complexes at $0$, and taking cohomology gives the desired short exact sequence (noting that $U_x$ is a minimal complete injective resolution of $M_x$ by Remark \ref{localization_preserves_min_inj_res} and $\Gamma_\a\circ \Gamma_x=\Gamma_\b$).
\end{proof}

\begin{cor}
In $\sGInj(R)$, under the same hypotheses as Proposition \ref{ses_of_slc_1}, we have the following distinguished triangle:
$$\Gstab_\b(M) \to \Gstab_\a(M)\to \Gstab_\a(M_x)\to \stabcosyz^1\Gstab_\b(M).$$
\end{cor}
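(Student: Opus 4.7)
The plan is to observe that the short exact sequence produced by Proposition \ref{ses_of_slc_1} already lives entirely inside $\GInj(R)$, and then to invoke the standard construction of the triangulated structure on $\sGInj(R)$ to turn that short exact sequence into a distinguished triangle.

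First I would note that by the remarks following Definition \ref{stableloccohdefn} (and the fact, established via Lemma \ref{Gamma_preserves_acy}, that $\Gamma_\a$ preserves acyclic complexes of injectives), each of the three modules $\Gstab_\b(M)$, $\Gstab_\a(M)$, and $\Gstab_\a(M_x)$ is Gorenstein injective. Consequently, the sequence
$$0 \to \Gstab_\b(M) \to \Gstab_\a(M) \to \Gstab_\a(M_x) \to 0$$
from Proposition \ref{ses_of_slc_1} is a short exact sequence in the Frobenius exact category $\GInj(R)$, whose projective--injective objects are precisely the injective $R$-modules.

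Next I would apply the standard Happel-style construction of the triangulated structure on the associated stable category (see \cite{Hap88}, and as reformulated in the setting of Gorenstein injectives in the equivalence recalled in Remark \ref{cir_is_an_equivalence}): every short exact sequence $0 \to A \to B \to C \to 0$ of Gorenstein injective modules gives rise to a distinguished triangle $A \to B \to C \to \stabcosyz^1 A$ in $\sGInj(R)$, where the connecting morphism $C \to \stabcosyz^1 A$ is obtained by choosing an embedding of $A$ into an injective $R$-module (for instance its injective envelope), forming the pushout, and identifying the resulting quotient with $\stabcosyz^1 A$ in the stable category. Note that the translation functor on $\sGInj(R)$ is precisely $\stabcosyz^1$, by the explicit description given in the notation block preceding Proposition \ref{shifts_in_GInj}, so the target of the connecting morphism matches the convention used in the statement. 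Applying this construction to the sequence above yields the desired distinguished triangle.

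There is essentially no obstacle here beyond recognizing which general categorical fact is being invoked; the mild care point is only to confirm that the Frobenius structure on $\GInj(R)$ and its shift agree with the $\stabcosyz^1$ notation fixed earlier, which is immediate from Remark \ref{cir_is_an_equivalence} and the surrounding definitions. For this reason, the proof in the paper can reasonably be written as a one-line deduction from Proposition \ref{ses_of_slc_1}.
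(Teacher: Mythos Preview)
Your proposal is correct and matches the paper's approach: the paper provides no explicit proof for this corollary, treating it as an immediate consequence of Proposition \ref{ses_of_slc_1} via exactly the mechanism you describe (short exact sequences in the Frobenius category $\GInj(R)$ induce distinguished triangles in $\sGInj(R)$, as in \cite{Hap88}). Your observation that the paper can reasonably write this as a one-line deduction is accurate; indeed, the same passage from short exact sequence to distinguished triangle is used without further comment earlier in the paper (see the transition from (\ref{snake_kernels_Rmod}) to (\ref{snake_kernels})).
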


\subsection{Extension of Stevenson's functor}
Let $R$ be a Gorenstein ring.  Greg Stevenson considers in \cite{Ste14}, for any ideal $\a\subset R$, 
$$\Gamma_{\a}(-):\Kac(\Inj R)\to \Kac(\Inj R),$$
which takes an acyclic complex of injectives $U$ to an acyclic complex of injectives $\Gamma_{\a}(U)$ where the degree $i$ piece consists of those indecomposable injectives corresponding to primes in $V(\a)$, i.e., primes containing $\a$ (although he uses the notation $\Gamma_{V(\a)}(-)$ for $\Gamma_\a(-)$).  Via the equivalence $\Kac(\Inj R)\to \sGInj(R)$ sending $X\mapsto Z^0(X)$, he considers $\Gamma_\a(-)$ as a functor
$$\Gamma_{\a}(-):\sGInj(R)\to \sGInj(R),$$
i.e., for a Gorenstein injective module $G$ with complete injective resolution $U$, $\Gamma_{\a}(G)=Z^0\Gamma_\a(U)$.  The functor $\Gstab_\a(-)$ is a lifting of this, such that the following diagram commutes:

\[\xymatrix{
\ModR\ar[drr]^{\Gstab_\a(-)} \ar[d]_{Z^0\CIR(-)}&& \\
\sGInj(R)\ar[rr]_{\Gamma_{\a}(-)} && \sGInj(R)
}\]
i.e., for any $R$-module $M$, 
$$\Gstab_\a(M)=Z^0\Gamma_\a(\CIR(M))\cong \Gamma_\a(Z^0\CIR(M)).$$
If $G$ is a Gorenstein injective $R$-module, then $Z^0\CIR(G)\simeq G$, hence $\Gstab_\a(G)\simeq \Gamma_\a(G)$ in $\sGInj(R)$ (and $\Gstab_\a(G)\cong \Gamma_\a(G)$ if $G$ is reduced Gorenstein injective).

\section{The hypersurface case}\label{hypersurface_section}
Let $Q$ be a regular local ring, $f\in Q$ a non-zerodivisor and $R=Q/(f)$. Referring to \cite{Wal14,DM13}, we let $[\LF(Q,f)]$ denote the homotopy category of linear factorizations, and $[\mf(Q,f)]$, $[\MF(Q,f)]$, $[\IF(Q,f)]$ denote the full subcategories of finitely generated matrix factorizations, not necessarily finitely generated matrix factorizations, and injective factorizations, respectively.  Dual to the notion of MCM modules being cokernels of finitely generated matrix factorizations \cite{Eis80}, Gorenstein injective modules appear as kernels of injective factorizations.  More precisely, Walker proves the following (as this has not appeared publicly, we include his proof below):
\begin{thm}\cite{Wal14}\label{ker_IF_to_GInj_is_equiv_Walker}
For a regular ring $Q$ and non-zerodivisor $f\in Q$, the functor 
$$\ker:[\IF(Q,f)]\to \sGInj(R)$$
(that sends an object $(\xymatrix{I_1\ar@<.5ex>[r]& I_0\ar@<.5ex>[l]})$ of $\IF(Q,f)$ to $\ker(I_0\to I_1)$) is an equivalence of triangulated categories, where $R=Q/(f)$.
\end{thm}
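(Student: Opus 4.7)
The strategy is to dualize Eisenbud's theorem that cokernel induces an equivalence $\coker:[\mf(Q,f)]\to\sMCM(R)$. I would verify in turn: well-definedness of $\ker$ as a functor into $\sGInj(R)$, full faithfulness, essential surjectivity, and compatibility with the triangulated structures.

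For well-definedness, let $X=(\alpha:I_0\to I_1,\beta:I_1\to I_0)$ be an injective factorization. The kernel $K:=\ker(\alpha)$ is annihilated by $f$ (since $fx=\beta\alpha(x)=0$ for $x\in K$), so it is naturally an $R$-module. To exhibit $K$ as Gorenstein injective, I would build a 2-periodic totally acyclic complex of injective $R$-modules with $K=Z^0$. Set $J_i:=\Hom_Q(R,I_i)=(0:_{I_i}f)$, which is an injective $R$-module by change-of-rings. Both $\alpha$ and $\beta$ restrict to $R$-linear maps $\tilde\alpha:J_0\to J_1$ and $\tilde\beta:J_1\to J_0$, and the factorization relations force $\tilde\alpha\tilde\beta=0$ and $\tilde\beta\tilde\alpha=0$, yielding the 2-periodic complex
$$U(X)=\cdots\to J_0\xrightarrow{\tilde\alpha}J_1\xrightarrow{\tilde\beta}J_0\xrightarrow{\tilde\alpha}J_1\to\cdots.$$
Acyclicity of $U(X)$ reduces to the identities $\ker(\tilde\alpha)=\im(\tilde\beta)$ and $\ker(\tilde\beta)=\im(\tilde\alpha)$, which follow from the factorization relations combined with an $f$-torsion argument. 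Since $R$ is Gorenstein, Remark \ref{gor_acy_totacy} upgrades acyclic to totally acyclic, so $K=Z^0(U(X))$ is Gorenstein injective. Null-homotopic morphisms of injective factorizations restrict to maps on the $J_i$ that factor through injective $R$-modules, so $\ker$ descends to a functor on the homotopy category; triangulatedness is checked by tracking mapping cones on both sides.

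Full faithfulness follows from a standard lifting argument: given an $R$-linear $\phi:\ker(\alpha_1)\to\ker(\alpha_2)$, view it as a $Q$-linear map $\ker(\alpha_1)\hookrightarrow I_0^{(1)}\to I_0^{(2)}$, extend to $\phi_0:I_0^{(1)}\to I_0^{(2)}$ using injectivity of $I_0^{(2)}$, and then extend further to $\phi_1:I_1^{(1)}\to I_1^{(2)}$ compatibly with $\alpha$ and $\beta$; uniqueness up to homotopy is a diagram chase. For essential surjectivity, let $G\in\sGInj(R)$. By Stevenson's equivalence $Z^0:\Kac(\Inj R)\xrightarrow{\simeq}\sGInj(R)$ (Remark \ref{cir_is_an_equivalence}), $G\cong Z^0(U)$ for some totally acyclic complex $U$ of injective $R$-modules. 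Over the hypersurface $R$, complete injective resolutions are eventually 2-periodic, so after replacing $U$ by a 2-periodic model, we obtain injective $R$-modules $J_0,J_1$ with maps $\tilde\alpha,\tilde\beta$ having $\ker(\tilde\alpha)\simeq G$. Choose injective $Q$-modules $I_0,I_1$ with $J_i=\Hom_Q(R,I_i)$ (taking $I_i=E_Q(J_i)$; every $E_R(R/\p)$ with $\p\supseteq(f)$ arises as $\Hom_Q(R,E_Q(Q/\p))$), lift $\tilde\alpha,\tilde\beta$ to $Q$-linear $\alpha,\beta$ by injectivity of the targets, and adjust $\alpha,\beta$ so the relations $\alpha\beta=f\cdot\id_{I_1}$ and $\beta\alpha=f\cdot\id_{I_0}$ hold strictly.

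The main obstacle is this final adjustment: the initial lifts $\alpha,\beta$ satisfy $\alpha\beta=f\cdot\id$ and $\beta\alpha=f\cdot\id$ only modulo $f$-torsion, and one must modify them so the relations hold on the nose. The obstruction to exact lifting is a class controlled by the $f$-torsion structure of the $I_i$, and the 2-periodicity of complete injective resolutions over the hypersurface $R$, combined with injectivity of the $I_i$ over $Q$, is what forces this class to vanish.
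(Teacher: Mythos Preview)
Your treatment of well-definedness (building the acyclic complex $U(X)$ via $\Hom_Q(R,-)$) and of fullness matches the paper's and is correct. There are genuine gaps in the other two steps.

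\textbf{Essential surjectivity.} Your route through ``eventual $2$-periodicity of complete injective resolutions over a hypersurface'' followed by lifting to $Q$ is both unjustified and unnecessary. The $2$-periodicity claim is not established for arbitrary (big) Gorenstein injective $R$-modules --- it is essentially equivalent to the statement you are trying to prove --- and you yourself concede that the obstruction to making the lifted relations $\alpha\beta=f$ and $\beta\alpha=f$ hold exactly is unresolved. The paper bypasses all of this with a single observation: for a nonzero $G\in\GInj(R)$, the Bennis--Mahdou change-of-rings theorem \cite[Theorem 4.2]{BM10} gives $\id_Q G\le 1$, so there is a short $Q$-injective resolution $0\to G\to I_0\xrightarrow{\beta}I_1\to 0$. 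Since $fG=0$, multiplication by $f$ on $I_0$ kills $G$ and hence factors uniquely through $I_0/G\cong I_1$, producing $\alpha:I_1\to I_0$ with $\alpha\beta=f$; surjectivity of $\beta$ then forces $\beta\alpha=f$. The injective factorization is constructed directly, with no lifting obstruction to handle.

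\textbf{Faithfulness.} ``Uniqueness up to homotopy is a diagram chase'' does not suffice. What must be shown is that if a morphism $h:\mathcal{I}\to\mathcal{I}'$ of injective factorizations induces a map on kernels that factors through an injective $R$-module $J$, then $h$ is null-homotopic in $[\IF(Q,f)]$. The paper does this by using the same $\id_Q J\le 1$ trick to build an injective factorization $\mathcal{E}$ with kernel $J$, showing $h$ factors through $\mathcal{E}$ up to homotopy, and then proving $\mathcal{E}$ is homotopy equivalent to the visibly contractible factorization given by $E_Q(J)\xrightarrow{1}E_Q(J)$ and $E_Q(J)\xrightarrow{f}E_Q(J)$. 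The key point there is that $\ker(f:E_Q(J)\to E_Q(J))=J$, which uses that $J\hookrightarrow E_Q(J)$ is essential and that $J$ is already $R$-injective.
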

\begin{proof}
Since an endomorphism of an injective module determined by a non-zerodivisor is surjective, the maps $\alpha$ and $\beta$ in an injective factorization $(\xymatrix{I_1\ar@<.5ex>[r]^{\alpha}& I_0\ar@<.5ex>[l]^{\beta}})$ are surjective.  In particular, this yields a short exact sequence 
$$0\to \ker(\beta)\to I_0\xrightarrow{\beta} I_1\to 0$$
over $Q$.  Since $fx=\alpha\beta x=0$ for all $x\in \ker(\beta)$, $\ker(\beta)$ is an $R$-module.   
Then $\id_Q\ker(\beta)\leq 1$ implies, by \cite[Theorem 4.2]{BM10}, that $\Gid_R\ker(\beta)\leq 0$, hence $\ker(\beta)$ is Gorenstein injective.  We obtain a functor $\IF(Q,f)\to \sGInj(R)$. This functor sends the difference of homotopic maps of injective factorizations to a map that factors through an injective module (given by the homotopy), hence we have an induced functor $\ker:[\IF(Q,f)]\to \sGInj(R)$.  

On the other hand, this functor factors though $\Kac(\Inj R)$ in the following manner.  For an injective $Q$-module $I$, define $I^R=\Hom_Q(R,I)$, clearly seen to be an injective $R$-module. Given a map $\alpha:I_1\to I_0$ of injective $Q$-modules, let $\alpha^R$ denote the induced map of $R$-modules from $I_1^R$ to $I_0^R$.  Observe that $I^R$ is a $Q$-submodule of $I$ and $\alpha^R$ is the restriction of $\alpha$. For $\I=(\xymatrix{I_1\ar@<.5ex>[r]^{\alpha}& I_0\ar@<.5ex>[l]^{\beta}})$ in $\IF(Q,f)$,
$$\I^R:=\left(\cdots \xrightarrow{\alpha^R} I_0^R\xrightarrow{\beta^R} I_1^R \xrightarrow{\alpha^R} I_0^R \xrightarrow{\beta^R}\cdots\right)$$
is an acyclic complex (since $\alpha$ and $\beta$ are surjective).  The assignment 
$$\I \mapsto \I^R$$ 
yields a functor $\IF(Q,f)\to \Kac(\Inj R)$, and it clearly preserves homotopies and hence induces a functor on the associated homotopy categories, $(-)^R:[\IF(Q,f)]\to \Kac(\Inj R)$.  The induced functor $(-)^R$ commutes with suspensions and mapping cones and hence is triangulated.  Note that as $\ker(\beta)$ is an $R$-module, $\ker(\beta)=\ker(\beta^R)$.  
Given $\I=(\xymatrix{I_1\ar@<.5ex>[r]^{\alpha}& I_0\ar@<.5ex>[l]^{\beta}})\in [\IF(Q,f)]$,
$$\ker(\I)=\ker(\beta)=\ker(\beta^R)=Z^0(\I^R),$$
yielding a commutative diagram of functors, where $Z^0:\Kac(\Inj R)\to \sGInj(R)$ is a triangulated equivalence by \cite[Proposition 4.7]{Ste14}:
\[\xymatrix{
[\IF(Q,f)]\ar[rr]^{\ker}\ar[dr]_{(-)^R} && \sGInj(R)\\
& \Kac(\Inj R) \ar[ur]_{Z^0}^{\sim}&
}\]
The triangulated structure on $\sGInj(R)$ is by definition taken to be inherited from $\Kac(\Inj R)$, and we therefore have that $\ker:[\IF(Q,f)]\to \sGInj(R)$ is a triangulated functor.  It remains to show that $\ker$ is essentially surjective and fully faithful.

Given a Gorenstein injective $R$-module $M\not=0$, it is straightforward from \cite[Theorem 4.2]{BM10} to see that $\id_Q M\leq 1$.  There then exists a $Q$-injective resolution $0\to M\to I_0\xrightarrow{\beta}I_1\to 0$ of $M$.  Since multiplication by $f$ on $M$ is $0$, there is a unique map $\alpha:I_1\to I_0$ such that $\alpha \beta$ is multiplication by $f$ on $I_0$.  Note that $\beta \alpha \beta= f\beta$ and hence $f=\beta \alpha$ since $\beta$ is surjective.  Thus $(\xymatrix{I_1\ar@<.5ex>[r]^{\alpha}& I_0\ar@<.5ex>[l]^{\beta}})$ is an object of $\IF(Q,f)$ with $\ker(\beta)=M$, hence $\ker$ is essentially surjective.

For the remainder of the proof, set $\I=(\xymatrix{I_1\ar@<.5ex>[r]^{\alpha}& I_0\ar@<.5ex>[l]^{\beta}})$ and $\I'=(\xymatrix{I_1'\ar@<.5ex>[r]^{\alpha'}& I_0'\ar@<.5ex>[l]^{\beta'}})$. Suppose $g:\ker(\beta)\to \ker(\beta')$ is a morphism in $\sGInj(R)$.
Then we may find maps $g_j:I_j\to I_j'$ for $j=0,1$ such that $\beta' g_0=g_1 \beta$.  An easy diagram chase shows that the $g_j$'s also commute with the induced maps $\alpha,\alpha'$, and hence the $g_j$'s determine a morphism of linear factorizations from $\I$ to $\I'$ with $g_0|_{\ker(\beta)}=g$. This shows $\ker$ is a full functor.

Finally, suppose $h:\I\to \I'$ is a morphism such that $h:\ker(\beta)\to \ker(\beta')$ factors through an injective $R$-module, say $J$.  We may find a $Q$-injective resolution $0\to J\to E_0\xrightarrow{\gamma} E_1\to 0$ and construct an injective factorization $\E=(\xymatrix{E_1\ar@<.5ex>[r]^{\delta}& E_0\ar@<.5ex>[l]^{\gamma}})$.  By uniqueness up to homotopy equivalence of $Q$-injective resolutions, $h_j:I_j\to I_j'$ factors through $E_j$ for $j=0,1$ (up to homotopy equivalence), and moreover, $h:\I\to \I'$ factors through $\E$ (up to homotopy equivalence).  Next, setting $E=E_Q(J)$, we claim that
$$0\to J\to E\xrightarrow{f} E\to 0$$
is also an injective resolution of $J$.  Since $f$ is a non-zerodivisor and $E$ is an injective $Q$-module, $f:E\to E$ is onto.  The only thing left to check is that $J=K:=\ker(f:E\to E)$.  We have $J\subseteq K$, since $J$ is annihilated by $f$, and it is clear that $K$ is an $R$-module.  Given any non-zero $R$-submodule $N$ of $K$, $N$ is also a $Q$-submodule of $E$ and hence, since $J\to E$ is essential, we have $N\cap J\not=0$.  This proves $J\to K$ is an essential extension of $R$-modules and hence, since $J$ is injective, $J=K$. Set $\E'=(\xymatrix{E\ar@<.5ex>[r]^{1}&E\ar@<.5ex>[l]^{f}})$ as the corresponding injective factorization.  
But $\E'$ is contractible and $\E'\simeq \E$, so $h:\I\to \I'$ factors (in the homotopy category $[\IF(Q,f)]$) through a contractible object, hence $h$ is null-homotopic, so $\ker$ is faithful.  Therefore $\ker:[\IF(Q,f)]\to \sGInj(R)$ is an equivalence of triangulated categories.
\end{proof}

When $Q$ and $R$ are as above, and $M\in \GPrj(R)$ (or, in particular when $M$ is MCM), we will compute $\Gstab_\a(M)$ by utilizing the equivalence $[\MF(Q,f)]\to \sGPrj(R)$ given in Lemma \ref{MFQf_equiv_GPrj}.  This yields a plethora of concrete examples of stable local cohomology of MCM modules over a hypersurface.  Before proceeding, we need a few lemmas.  Compare the following with \cite[Proposition 23.6]{Swa}:

\begin{lem}\label{D_Q_to_D_R}
Let $S$ be a commutative noetherian ring, $M$ an $S$-module, and $x$ a non-zerodivisor on $S$ and on $M$.  If $M\to I$ is a minimal injective resolution of $M$, then there is a canonical induced map $M/xM\to \Sigma^1\Hom_S(S/xS,I)$ and it is a minimal injective resolution of $M/xM$.  
\end{lem}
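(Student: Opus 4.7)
The plan is to apply $\Hom_S(-,I)$ to the short exact sequence $0\to S\xrightarrow{x}S\to S/xS\to 0$. Since $x$ is a non-zerodivisor on $S$ and each $I^i$ is injective, multiplication by $x$ is surjective on every $I^i$, producing a short exact sequence of complexes
\[
0\to \Hom_S(S/xS,I)\to I\xrightarrow{x}I\to 0.
\]
The associated long exact sequence in cohomology, combined with $H^0(I)=M$, $H^i(I)=0$ for $i\neq 0$, and the hypothesis that $x$ is a non-zerodivisor on $M$, forces $H^i(\Hom_S(S/xS,I))=0$ for $i\neq 1$ and $H^1(\Hom_S(S/xS,I))\cong M/xM$. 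After shifting, $\Sigma^1\Hom_S(S/xS,I)$ has cohomology concentrated in degree $0$, equal to $M/xM$. The canonical augmentation would then be defined by the connecting homomorphism: send $[m]\in M/xM$ to $d_I(y)\in\Hom_S(S/xS,I^1)$, where $y\in I^0$ is any preimage of $m$ under $x\colon I^0\to I^0$.

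Next I would verify the complex is supported in non-negative degrees, i.e., $\Hom_S(S/xS,I^0)=0$. Minimality of $I$ gives $I^0\cong E_S(M)$, so it suffices to see that $E_S(M)$ has no $x$-torsion. This is a standard essential-extension argument: if $0\neq z\in E_S(M)$ satisfies $xz=0$, then essentiality of $M\subseteq E_S(M)$ yields $s\in S$ with $0\neq sz\in M$, whence $x(sz)=0$ contradicts that $x$ is a non-zerodivisor on $M$. Consequently $x\colon I^0\to I^0$ is bijective, the preimage $y$ above is unique, the canonical map is unambiguously $[m]\mapsto d_I(x^{-1}m)$, and $\Sigma^1\Hom_S(S/xS,I)$ is an honest injective resolution of $M/xM$ as an $S/xS$-module (each $\Hom_S(S/xS,I^i)$ being an injective $S/xS$-module by Hom-tensor adjunction).

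For the minimality assertion, I would invoke the criterion in Remark \ref{min_inj_same_as_zero_maps}: a complex of injective $S/xS$-modules is minimal if and only if $\Hom_{S/xS}(\kappa(\overline{\p}),-)_{\overline{\p}}$ sends every differential to zero, for each $\overline{\p}\in\Spec(S/xS)$. The adjunction $\Hom_{S/xS}(\kappa(\overline{\p}),\Hom_S(S/xS,I^i))\cong \Hom_S(\kappa(\p),I^i)$ (for $\p\supseteq(x)$), together with compatibility of localization at $\overline{\p}$ with localization at $\p$, reduces this to the analogous vanishing for the differentials of $I$ over $S$, which is precisely the minimality hypothesis on $I$. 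The main obstacle I expect is securing $\Hom_S(S/xS,I^0)=0$: this is where minimality of $I$ genuinely enters, guaranteeing that $\Sigma^1\Hom_S(S/xS,I)$ is actually supported in non-negative degrees and that the augmentation from $M/xM$ is well-defined at the level of complexes rather than merely on cohomology. Once the essential-extension step is in hand, the rest is bookkeeping with the long exact sequence and the minimality criterion.
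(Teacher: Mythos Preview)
Your proposal is correct and follows essentially the same route as the paper: both obtain the short exact sequence $0\to \Hom_S(S/xS,I)\to I\xrightarrow{x}I\to 0$, use the long exact sequence in cohomology together with the essential-extension argument showing $\Hom_S(S/xS,I^0)=0$, and conclude that $\Sigma^1\Hom_S(S/xS,I)$ resolves $M/xM$. The only divergence is in the minimality step: the paper verifies directly that each $\ker\del^i_{\Hom_S(S/xS,I)}\hookrightarrow \Hom_S(S/xS,I^i)$ is essential (if $0\neq N\subseteq (0:_{I^i}x)$ then $N\cap\ker\del_I^i\neq 0$ by minimality of $I$, and this intersection is $x$-torsion), whereas you use the equivalent criterion from Remark~\ref{min_inj_same_as_zero_maps} via the adjunction $\Hom_{S/xS}(\kappa(\overline\p),\Hom_S(S/xS,I))\cong\Hom_S(\kappa(\p),I)$; both are straightforward and equally efficient.
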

\begin{proof}
Applying $\Hom_S(S/xS,-)$ to $I$, we obtain a short exact sequence of complexes:
$$0\to \Hom_S(S/xS,I)\to I\xrightarrow{x}I\to 0.$$
Note that $\Hom_S(S/xS,I^0)=0$ (otherwise, since $I^0\cong E_R(M)$, we would have $(0:_{E_R(M)}x)\cap M\not=0$, contradicting $x$ being a non-zerodivisor on $M$).  The long exact sequence in cohomology yields a short exact sequence:
$$0\to M\xrightarrow{x} M \xrightarrow{\operatorname{}}\Ext_S^1(S/xS,M)\to 0.$$
Therefore we have a canonical injection $M/xM\cong \Ext_S^1(S/xS,M)\to \Hom_S(S/xS, I^1),$ which implies $M/xM\to \Sigma^1\Hom_S(S/xS,I)$ is an injective resolution.  Minimality of $\Hom_S(S/xS,I)$ follows by definition and minimality of $I$: for $i\geq 1$, if $0\not=N\subseteq (0:_{I^i}x)\subseteq I^i$, then $N\cap \ker(\del_I^i)\not=0$, hence $N\cap (0:_{\ker(\del_I^i)}x)\not=0$, so the complex $\Hom_S(S/xS,I)$ is minimal as well.
\end{proof}
In particular, for a regular local ring $Q$, $f\in Q$ a non-zerodivisor, and $R=Q/(f)$, if $Q\to D_Q$ is a minimal injective resolution of $Q$, then $R\to \Sigma^1\Hom_Q(R,D_Q)$ is a minimal injective resolution of $R$.

The following lemma extends the classical result that for a hypersurface $R=Q/(f)$, $\coker:[\mf(Q,f)]\to \sMCM(R)$ is an equivalence \cite[Corollary 6.3]{Eis80}.
\begin{lem}\label{MFQf_equiv_GPrj}
Let $Q$ be a regular local ring, $f\in Q$ a non-zerodivisor, and $R=Q/(f)$. Then
$$\coker:[\MF(Q,f)]\to \sGPrj(R)$$
is an equivalence, where if $P=(\xymatrix{P_1\ar@<.5ex>[r]^{d_1}& P_0\ar@<.5ex>[l]^{d_0}})\in\MF(Q,f)$, then $\coker(P)=\coker(d_1)$.
\end{lem}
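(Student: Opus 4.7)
The plan is to dualize Walker's proof of Theorem \ref{ker_IF_to_GInj_is_equiv_Walker}, replacing the functor $\Hom_Q(R,-)$ with $-\otimes_Q R$ throughout. For well-definedness: given a matrix factorization $P=(\xymatrix{P_1\ar@<.5ex>[r]^{d_1}& P_0\ar@<.5ex>[l]^{d_0}})$, both $d_0$ and $d_1$ are injective, since $d_0d_1=f\cdot\id_{P_1}$ and $d_1d_0=f\cdot\id_{P_0}$ and $f$ is a non-zerodivisor on the flat $Q$-modules $P_0,P_1$. So $0\to P_1\xrightarrow{d_1}P_0\to\coker(d_1)\to 0$ is a $Q$-projective resolution, giving $\pd_Q\coker(d_1)\leq 1$. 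Since $f$ annihilates $\coker(d_1)$, the projective analog of \cite[Theorem 4.2]{BM10} yields $\coker(d_1)\in\GPrj(R)$. Homotopies of matrix factorizations are sent to maps factoring through projective $R$-modules, so we obtain a functor $\coker:[\MF(Q,f)]\to\sGPrj(R)$.

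Next, I factor $\coker$ through $\Kac(\Prj R)$ via the ``reduction mod $f$'' functor $P\mapsto P\otimes_Q R$, whose image is the 2-periodic complex $\cdots\to P_0\otimes_Q R\to P_1\otimes_Q R\to P_0\otimes_Q R\to\cdots$. This is acyclic by a standard lifting argument: if $d_0(x)=fy$ for $x\in P_0,y\in P_1$, then $fx=d_1d_0(x)=fd_1(y)$, so $x=d_1(y)$ by $f$-torsion-freeness. Over the Gorenstein ring $R$, $\Omega_0:\Kac(\Prj R)\to\sGPrj(R)$ is a triangulated equivalence (the projective counterpart of \cite[Proposition 4.7]{Ste14}, using the Iyengar-Krause identification of acyclic with totally acyclic complexes of projectives), and $\coker=\Omega_0\circ(-\otimes_Q R)$. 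So it suffices to show that $\coker$ is essentially surjective, full, and faithful.

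For essential surjectivity, given $0\neq M\in\GPrj(R)$, the change-of-rings result gives $\pd_QM\leq 1$; a $Q$-projective resolution $0\to P_1\xrightarrow{d_1}P_0\to M\to 0$, combined with the fact that $fM=0$, produces (uniquely) $d_0:P_0\to P_1$ with $d_1d_0=f\cdot\id_{P_0}$, and injectivity of $d_1$ forces $d_0d_1=f\cdot\id_{P_1}$, giving a matrix factorization with cokernel $M$. For fullness, a module map $g:\coker(P)\to\coker(P')$ representing a morphism in $\sGPrj(R)$ lifts via projectivity and the comparison theorem to maps $g_0:P_0\to P_0'$ and $g_1:P_1\to P_1'$ intertwining the $d_1$'s; uniqueness in $d_1'd_0'=f\cdot\id$ then forces compatibility with the $d_0$'s, yielding a morphism of matrix factorizations realizing $g$. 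For faithfulness, if $h:P\to P'$ induces a map on cokernels that factors through a projective $R$-module $J$, let $\tilde{J}$ be a $Q$-projective lift of $J$ (so $J=\tilde{J}/f\tilde{J}$) and form the matrix factorization $\mathcal{J}=(\xymatrix{\tilde{J}\ar@<.5ex>[r]^{f}&\tilde{J}\ar@<.5ex>[l]^{1}})$, which has cokernel $J$ and is contractible via the null-homotopy $s_0=0,s_1=\id$ (checking $d_1s_0+s_1d_0=\id$ and $d_0s_1+s_0d_1=\id$). Uniqueness up to $Q$-homotopy of $Q$-projective resolutions lets $h$ factor through $\mathcal{J}$ in $[\MF(Q,f)]$, so $h=0$. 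The main technical obstacle is confirming the bound $\pd_QM\leq 1$ for \emph{arbitrary} (possibly infinite-rank) $M\in\GPrj(R)$; this requires the general-module form of the Bennis-Mahdou change-of-rings theorem, which must be verified to apply without finite-generation hypotheses.
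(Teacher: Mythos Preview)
Your proof is correct and matches the paper's approach: the paper omits the argument as being ``completely analogous to'' Orlov's proof for the finite-rank case \cite[proof of Proposition 3.7]{Orl04}, with the sole additional input being \cite[Theorem 4.1]{BM10} (that a nonzero Gorenstein projective $R$-module has $\pd_Q G=1$), which is exactly the technical point you isolate. You template your argument on dualizing Walker's injective-side proof (Theorem \ref{ker_IF_to_GInj_is_equiv_Walker}) rather than on generalizing Orlov's finite-rank projective-side proof, but the resulting structure and key ingredient are identical.
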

\begin{proof}
We omit the proof, as it is completely analogous to the proof that $[\mf(Q,f)]\to \sMCM(R)$ is an equivalence \cite[proof of Proposition 3.7]{Orl04}, except one needs the additional fact that a nonzero Gorenstein projective $R$-module $G$ (not necessarily finitely generated) has $\pd_QG=1$ \cite[Theorem 4.1]{BM10}. 
\end{proof}

\begin{lem}\label{GPrj_equiv_KacPrj}
Let $R$ be a Gorenstein ring.  Then
$\xymatrix{\sGPrj(R)\ar@<.5ex>[r]^{\CPR(-)}& \Kac(\Prj R)\ar@<.5ex>[l]^{\Omega_0(-)}}$
is an equivalence.
\end{lem}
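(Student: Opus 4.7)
The plan is to mimic the arguments already established for the two analogous equivalences in the paper: Buchweitz's equivalence $\Kac(\prj R)\xleftrightarrow{} \sMCM(R)$ cited in Remark \ref{cpr_well_def_up_to_homotopy}, and Stevenson's equivalence $\Kac(\Inj R)\xleftrightarrow{} \sGInj(R)$ cited in Remark \ref{cir_is_an_equivalence}. The key structural input is Remark \ref{gor_acy_totacy}: since $R$ is Gorenstein, $\Kac(\Prj R)$ consists precisely of the totally acyclic complexes of projectives, and a module is Gorenstein projective if and only if it is $\Omega_0(T)$ for some such $T$.

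First I would verify that both functors are well-defined. For $\Omega_0(-)\colon \Kac(\Prj R)\to \sGPrj(R)$, Remark \ref{gor_acy_totacy} says $\Omega_0(T)$ is Gorenstein projective, and a routine check shows a null-homotopic chain map between acyclic projective complexes induces a map on $\Omega_0$ that factors through a projective module, so the assignment descends to stable categories. For $\CPR(-)\colon \sGPrj(R)\to \Kac(\Prj R)$, the existence of a complete projective resolution for every Gorenstein projective module is immediate from the definition, and the functoriality up to homotopy follows by the exact projective analogue of Lemma \ref{exist_unique_maps_of_cir} and Proposition \ref{CIR_functor}, combined with Remark \ref{cpr_well_def_up_to_homotopy}.

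Next I would check the two composites are naturally isomorphic to the identity. The composite $\Omega_0\circ \CPR$ is the identity on $\sGPrj(R)$ essentially by construction: if $T\xrightarrow{\tau} P\xrightarrow{\pi} M$ is a complete projective resolution, then $\Omega_0(T)\cong \Omega_0(P)\cong M$, and naturality follows from the uniqueness clause of the projective analogue of Lemma \ref{exist_unique_maps_of_cir}. For the composite $\CPR\circ \Omega_0$, given $T\in \Kac(\Prj R)$ set $M=\Omega_0(T)$; then $T$ is itself a complete projective resolution of $M$, since the brutal truncation $P:=T_{\geq 0}$ is a projective resolution of $M$ and the obvious chain map $\tau\colon T\to P$ (identity in nonnegative degrees, zero in negative degrees) has $\tau_i$ an isomorphism for $i\geq 0$. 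By the uniqueness up to homotopy equivalence of complete projective resolutions (Remark \ref{cpr_well_def_up_to_homotopy}, citing \cite[Lemma 5.3]{AM02}), $T\simeq \CPR(M)$ in $\Kac(\Prj R)$, and the resulting isomorphism is natural.

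The main thing to be careful about is that everything in sight works for arbitrary (not necessarily finitely generated) modules: the Buchweitz proof \cite[Theorem 4.4.1]{Buc86} cited in Remark \ref{cpr_well_def_up_to_homotopy} is stated for $\sMCM(R)$, so the only real work is noting that the same formal argument goes through verbatim once one has (i) the acyclic = totally acyclic identification of Remark \ref{gor_acy_totacy} and (ii) the standard lifting/uniqueness statements for chain maps between projective resolutions, neither of which requires any finiteness hypothesis. No genuinely new obstacle arises beyond dualizing Stevenson's argument for Remark \ref{cir_is_an_equivalence}.
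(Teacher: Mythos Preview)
Your proposal is correct and follows essentially the same approach as the paper: both mirror Buchweitz's argument for the finitely generated case, invoking \cite[Lemma 5.3]{AM02} as the key lifting/uniqueness input and observing that no finiteness hypothesis is required. The only difference is organizational---the paper shows $\Omega_0$ is essentially surjective and fully faithful rather than checking both composites are the identity---but the substance is identical.
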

\begin{proof}
We mirror the proof Buchweitz gives for showing $\Omega_0:\Kac(\prj R)\to \sMCM(R)$ is an equivalence \cite[Theorem 4.4.1]{Buc86}.  By definition, if $P\in \Kac(\Prj R)$, $\Omega_0(P)$ is a Gorenstein projective $R$-module; conversely, given a Gorenstein projective $R$-module $G$, the definition implies there exists $P\in \Kac(\Prj R)$ such that $\Omega_0(P)=G$, hence $\Omega_0$ is an essentially surjective functor.

Showing $\Omega_0$ is fully faithful follows from \cite[Lemma 5.3]{AM02}: If $S,T\in \Kac(\Prj R)$ and $f:\Omega_0S\to \Omega_0T$ is any map, then there exists a unique up to homotopy map $\widetilde{f}:S\to T$ 
such that the diagram
\[\xymatrix{
S \ar[r] \ar[d]^{\widetilde{f}} &S_{\geq 0} \ar[r] \ar[d]^{\widetilde{f}_{\geq0}} &\Omega_0S\ar[d]^f\\
T \ar[r] & T_{\geq 0} \ar[r] & \Omega_0T
}\]
commutes up to homotopy,  and further, if $f:\Omega_0S\to \Omega_0T$ is an isomorphism, then $\widetilde{f}:S\to T$ is a homotopy equivalence. Since $\widetilde{f}_{-1}|_{\Omega_0S}= f$, $\Omega_0$ is full. 
On the other hand if $\alpha,\beta:S\to T$ are two maps such that their restrictions to $\Omega_0S$ agree in $\sGInj(R)$, then \cite[Lemma 5.3]{AM02} implies $\alpha$ and $\beta$ are homotopy equivalent, hence $\Omega_0$ is faithful.

It is straightforward to see that $\CPR(-)$ gives an inverse equivalence to $\Omega_0$.
\end{proof}

\begin{prop}\label{MF_equiv_IF}
Let $Q$ be a regular local ring, $f\in Q$ a non-zerodivisor, and $Q\to D_Q$ a minimal injective resolution.  Then the functor
$$-\otimes_Q D_Q:[\MF(Q,f)]\to [\IF(Q,f)]$$
is an equivalence of triangulated categories which agrees with the equivalence $-\otimes_R D_R:\Kac(\Prj R)\to \Kac(\Inj R)$ \cite{IK06}, where $R=Q/(f)$ and $D_R$ is a minimal injective resolution of $R$.
\end{prop}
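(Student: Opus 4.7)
The plan is to realize the functor $-\otimes_Q D_Q$ as the composition of already-established equivalences. By Lemmas \ref{MFQf_equiv_GPrj} and \ref{GPrj_equiv_KacPrj}, the source category $[\MF(Q,f)]$ is equivalent to $\Kac(\Prj R)$ via $P \mapsto \CPR(\coker P)$; by Theorem \ref{ker_IF_to_GInj_is_equiv_Walker} (combined with Remark \ref{cir_is_an_equivalence}), the target category $[\IF(Q,f)]$ is equivalent to $\Kac(\Inj R)$ via the functor $(-)^R = \Hom_Q(R,-)$ appearing in that theorem's proof. Under these equivalences, $-\otimes_Q D_Q$ should correspond to the Iyengar--Krause equivalence $-\otimes_R D_R:\Kac(\Prj R)\to \Kac(\Inj R)$. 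Once commutativity of the resulting square is established, the equivalence property of $-\otimes_Q D_Q$ follows formally.

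First I would verify $-\otimes_Q D_Q$ is well-defined. For $P=(\xymatrix{P_1\ar@<.5ex>[r]& P_0\ar@<.5ex>[l]})\in \MF(Q,f)$, the tensor product is built by totalizing the bicomplex arising from the $2$-periodic structure of $P$ and the differential of $D_Q$. Since $D_Q$ is a bounded complex of injective $Q$-modules and each $P_\epsilon$ is finitely generated free, each component of $P\otimes_Q D_Q$ is a finite direct sum of injective $Q$-modules, hence injective; the composition of consecutive differentials equals multiplication by $f$ inherited from $P$. Thus $P\otimes_Q D_Q\in \IF(Q,f)$, and since chain homotopies of matrix factorizations carry through the tensor product, the assignment descends to homotopy categories.

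The main content is verifying the compatibility. Given $P\in \MF(Q,f)$ with $M=\coker(P)$, the complete projective resolution $\CPR(M)$ is the $2$-periodic acyclic complex of free $R$-modules obtained by reducing $P$ modulo $f$, i.e., $P\otimes_Q R$. Applying $(-)^R=\Hom_Q(R,-)$ to $P\otimes_Q D_Q$ and using that $P$ is $Q$-free gives the natural isomorphism
\[
\Hom_Q(R,\, P\otimes_Q D_Q) \cong P\otimes_Q \Hom_Q(R,D_Q) \cong (P\otimes_Q R)\otimes_R \Hom_Q(R,D_Q).
\]
By Lemma \ref{D_Q_to_D_R}, $\Sigma^1\Hom_Q(R,D_Q)$ is a minimal injective resolution of $R$, hence isomorphic to $D_R$ in $\K(\Inj R)$. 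Tracking the degree shift carefully against the indexing conventions for $\ker$ on $\IF(Q,f)$ and for $Z^0$ on $\Kac(\Inj R)$, this identifies $\ker(P\otimes_Q D_Q)$ in $\sGInj(R)$ with $Z^0(\CPR(M)\otimes_R D_R)$, which is precisely the image of $P$ under the composite of the other equivalences followed by $-\otimes_R D_R$.

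The main obstacle will be keeping the degree shifts consistent, since Lemma \ref{D_Q_to_D_R} introduces a $\Sigma^1$ that must be absorbed by the conventions identifying $2$-periodic complexes with linear factorizations. Once commutativity of the square is established, $-\otimes_Q D_Q$ is an equivalence of triangulated categories by the two-out-of-three principle, since all other functors in the diagram are equivalences; triangulatedness is automatic because each constituent functor ($\coker$, $\CPR$, $-\otimes_R D_R$, $Z^0$, and $(-)^R$) is triangulated.
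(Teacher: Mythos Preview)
Your proposal is correct and follows essentially the same route as the paper: both set up the square with $-\otimes_Q R$, $-\otimes_R D_R$, and (a shift of) $\Hom_Q(R,-)$ as the three known equivalences, verify commutativity via Lemma~\ref{D_Q_to_D_R} and the tensor--Hom swap for free modules, and conclude by two-out-of-three. One small slip: in $[\MF(Q,f)]$ the modules $P_\epsilon$ need not be finitely generated, but your argument goes through unchanged since $Q$ is Noetherian, so arbitrary direct sums of injectives remain injective.
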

\begin{proof}
Set $R=Q/(f)$. Composing the equivalence $\ker:[\IF(Q,f)]\to \sGInj(R)$ from Theorem \ref{ker_IF_to_GInj_is_equiv_Walker} with the equivalence $\CIR:\sGInj(R)\to \Kac(\Inj R)$ \cite[Proposition 4.7]{Ste14}, we have $\Hom_Q(R,-):[\IF(Q,f)]\to \Kac(\Inj R)$, and hence $\Sigma^1\Hom_Q(R,-):[\IF(Q,f)]\to \Kac(\Inj R)$, is an equivalence. Lemmas \ref{MFQf_equiv_GPrj} and \ref{GPrj_equiv_KacPrj} show that the composition $\CPR\circ\coker(-):[\MF(Q,f)]\to \Kac(\Prj R)$ is an equivalence, and we can interpret the composition $\CPR\circ\coker(-)$ as naturally isomorphic to modding out by $f$ and ``unfolding'' the injective factorization by forgetting the 2 periodicity, we denote this simply by $-\otimes_QR$.

Setting $D_R$ to be a minimal injective resolution of $R$, Iyengar and Krause show \cite[Theorem 4.2]{IK06} that $-\otimes_R D_R:\Kac(\Prj R)\to \Kac(\Inj R)$ is an equivalence.  We therefore have the following diagram, where the horizontal functors implicitly involve a forgetting of the 2 periodicity:

\[\xymatrix{
[\MF(Q,f)]\ar[rr]_{\sim}^{-\otimes_QR}\ar[d]^{-\otimes_Q D_Q} && \Kac(\Prj R)\ar[d]^{-\otimes_R D_R}_{\sim}\\
[\IF(Q,f)]\ar[rr]_{\Sigma^1\Hom_Q(R,-)}^{\sim} && \Kac(\Inj R)
}\]
We need only show this diagram commutes.  Let $\E\in [\MF(Q,f)]$.  Then 
\begin{align*}
\E\otimes_Q R\otimes_RD_R&\simeq \E\otimes_QD_R\\
&\simeq \E\otimes_Q \Sigma^1\Hom_Q(R,D_Q)\text{, by Lemma \ref{D_Q_to_D_R},}\\
&\simeq \Sigma^1\Hom_Q(R,D_Q\otimes_Q \E)\text{, since $\E$ is flat.}
\end{align*}
This shows the diagram commutes, and therefore $-\otimes_QD_Q:[\MF(Q,f)]\to [\IF(Q,f)]$ is an equivalence.
\end{proof}

Now, for a Gorenstein projective module over a hypersurface, we can equivalently compute stable local cohomology by applying $\Gamma_\a$ to the kernel of one of the maps of the corresponding injective factorization via this equivalence.  More precisely, we have
\begin{prop}\label{slc_of_mf}
Let $Q$ be a regular local ring, $f\in Q$ a non-zerodivisor, and $R=Q/(f)$.  If $M\in \sGPrj(R)$ is an $R$-module with corresponding matrix factorization $\E\in [\MF(Q,f)]$, we have 
$$\Gstab_\a(M)\simeq \ker(\E\otimes_Q \Gamma_\a(D_Q))\in \sGInj(R),$$
where $D_Q$ is a minimal injective resolution of $Q$.
\end{prop}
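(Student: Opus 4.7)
The idea is to transport the definition of $\Gstab_\a(M)$ across the chain of equivalences set up in Lemma \ref{MFQf_equiv_GPrj}, Lemma \ref{GPrj_equiv_KacPrj}, and Proposition \ref{MF_equiv_IF}, and then commute $\Gamma_\a$ past the tensor with $\E$. By hypothesis, $M \cong \coker(\E)$ in $\sGPrj(R)$. Composing the equivalence $\coker: [\MF(Q,f)] \to \sGPrj(R)$ with $\CPR: \sGPrj(R) \to \Kac(\Prj R)$ sends $\E$ to its unfolded $2$-periodic complex of projective $R$-modules, which is $\CPR(M)$. Applying $-\otimes_R D_R$, and using the commutative diagram established in the proof of Proposition \ref{MF_equiv_IF} together with the equivalence $-\otimes_R D_R: \Kac(\Prj R)\to \Kac(\Inj R)$ of \cite{IK06}, I obtain that $\CIR(M)$ is naturally isomorphic in $\Kac(\Inj R)$ to the unfolding of the injective factorization $\E \otimes_Q D_Q$.

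Next I apply $\Gamma_\a$. Since each term of $\E$ is a finitely generated free $Q$-module, Lemma \ref{Gamma_over_cpr} gives (termwise, and hence globally after totalization of the double complex coming from $\E \otimes_Q D_Q$) a natural isomorphism
\[
\Gamma_\a(\E \otimes_Q D_Q) \;\cong\; \E \otimes_Q \Gamma_\a(D_Q).
\]
Because $\Gamma_\a$ preserves injectivity and the maps $\alpha, \beta$ of $\E \otimes_Q D_Q$ are $Q$-linear, $\E\otimes_Q \Gamma_\a(D_Q)$ is again an object of $[\IF(Q,f)]$, with the same $2$-periodic unfolding structure.

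To finish, I combine the two steps: by Definition \ref{stableloccohdefn} and the preceding remark,
\[
\Gstab_\a(M) \;\simeq\; Z^0\bigl(\Gamma_\a(\CIR(M))\bigr) \;\simeq\; Z^0\bigl(\E \otimes_Q \Gamma_\a(D_Q)\bigr) \;\simeq\; \ker\bigl(\E \otimes_Q \Gamma_\a(D_Q)\bigr)
\]
in $\sGInj(R)$, where the last isomorphism follows because under the equivalence $\ker:[\IF(Q,f)]\to \sGInj(R)$ of Theorem \ref{ker_IF_to_GInj_is_equiv_Walker}, the functor $\ker$ on an injective factorization agrees with $Z^0$ applied to its $2$-periodic unfolding (as observed in the proof of Theorem \ref{ker_IF_to_GInj_is_equiv_Walker}, via the factorization through $(-)^R$ and $Z^0$).

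\textbf{Main obstacle.} The one nontrivial bookkeeping point is the compatibility of the three equivalences — $\coker$, $-\otimes_Q D_Q$, and $\ker$ — with the naive ``unfold'' operation that turns a matrix/injective factorization into a $2$-periodic acyclic complex. Once Proposition \ref{MF_equiv_IF} is in hand this amounts to tracing through its commutative diagram, but I must be careful about the degree (parity) convention so that $Z^0$ of the unfolded complex really does pick out the kernel produced by the functor $\ker$ of Theorem \ref{ker_IF_to_GInj_is_equiv_Walker}, rather than its shift. Up to this check (and up to the usual ambiguity, in $\sGInj(R)$, between the two kernels appearing in an injective factorization, which differ only by a shift by an injective summand), the proof reduces to the computations above.
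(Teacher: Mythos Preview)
Your proposal is correct and follows essentially the same route as the paper: both arguments trace $M$ through the equivalences of Lemma \ref{MFQf_equiv_GPrj}, Lemma \ref{GPrj_equiv_KacPrj}, and Proposition \ref{MF_equiv_IF}, commute $\Gamma_\a$ past the tensor with $\E$ (via Lemma \ref{Gamma_over_cpr} or left exactness), and then identify $Z^0$ of the resulting unfolded complex with $\ker$ of the injective factorization. The paper starts from Proposition \ref{computing_slc} with $Z^1\Gamma_\a(\CPR(M)\otimes_R D_R)$ and handles the shift you flag as your ``main obstacle'' by an explicit appeal to $2$-periodicity; one small inaccuracy in your write-up is that the terms of $\E\in[\MF(Q,f)]$ are free but not necessarily \emph{finitely generated}, though this is harmless since Lemma \ref{Gamma_over_cpr} applies to arbitrary complexes of projectives.
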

\begin{proof}
For $M\in \sGPrj(R)$, Lemma \ref{MFQf_equiv_GPrj} allows us to find $\E=(\xymatrix{P_1\ar@<.5ex>[r]^A& P_0\ar@<.5ex>[l]^B})\in [\MF(Q,f)]$ with $\coker A=M$.  Then
\begin{align*}
\Gstab_\a(M)&\simeq Z^1\Gamma_\a(\CPR(M)\otimes_R D_R)\text{, by Proposition \ref{computing_slc},}\\
&\simeq Z^1\Gamma_\a(\E\otimes_Q R \otimes_R  D_R)\\
&\simeq Z^1\Gamma_\a(\Sigma^1\Hom_Q(R,\E\otimes_Q D_Q))\text{, by the proof of Proposition \ref{MF_equiv_IF},}\\
&\simeq Z^0\Gamma_\a(\Hom_Q(R,\E\otimes_Q D_Q))\text{, by 2-periodicity,}\\
&\simeq \Gamma_\a Z^0\Hom_Q(R,\E\otimes_Q D_Q)\\
&\simeq \Gamma_\a Z^0(\E\otimes_Q D_Q)\\
&\simeq \ker(\E\otimes_Q \Gamma_\a(D_Q)).
\end{align*}
\end{proof}

In particular, if $(\xymatrix{Q^r\ar@<.5ex>[r]^A& Q^r\ar@<.5ex>[l]^B})\in [\mf(Q,f)]$ and $\coker(A)=M$ (i.e., $M$ is MCM), Proposition \ref{slc_of_mf} allows us to easily compute $\Gstab_\m(M)$, where we use $\m$ to denote the maximal ideal of both $R$ and $Q$. Note that $\Gamma_\m(D_Q)\cong \Sigma^{-\dim Q}E_Q(Q/\m)$, and $(\xymatrix{Q^r\ar@<.5ex>[r]^A& Q^r\ar@<.5ex>[l]^B})\otimes_Q E_Q(Q/\m)=(\xymatrix{E_Q(Q/\m)^r\ar@<.5ex>[r]^A& E_Q(Q/\m)^r\ar@<.5ex>[l]^B})$, hence 
\begin{align*}
\Gstab_\m(M)&\simeq Z^{\dim Q}(\xymatrix{E_Q(Q/\m)^r\ar@<.5ex>[r]^A& E_Q(Q/\m)^r\ar@<.5ex>[l]^B})\\
&\simeq \begin{cases} \ker(A:E_Q(Q/\m)^r\to E_Q(Q/\m)^r), & \text{if $\dim Q$ is odd,}\\ \ker(B:E_Q(Q/\m)^r\to E_Q(Q/\m)^r), & \text{if $\dim Q$ is even}\end{cases}\\
&\simeq \begin{cases} \ker(A:E_R(R/\m)^r\to E_R(R/\m)^r), & \text{if $\dim Q$ is odd,}\\ \ker(B:E_R(R/\m)^r\to E_R(R/\m)^r), & \text{if $\dim Q$ is even.}\end{cases}
\end{align*}

\begin{example}
Consider the isolated singularity $R=\frac{k[[x,y]]}{(xy)}$, where $k$ is a field of characteristic 0.  Set $\m=(x,y)R$ and $E:=E_R(R/\m)$.  Then if we set $M=R/(x)$, we can see that $M$ is a MCM $R$-module coming from the matrix factorization $(\xymatrix{k[[x,y]]\ar@<.5ex>[r]^x& k[[x,y]]\ar@<.5ex>[l]^y})$, and have $\syz_1M\cong R/(y)$.  By Proposition \ref{slc_of_mf} and the following remarks, we have
$$\Gstab_\m(M)\simeq \ker(E\xrightarrow{y} E)\cong E/(y)E$$
and
$$\Gstab_\m(\syz_1M)\simeq \ker(E\xrightarrow{x}E)\cong E/(x)E.$$
(Alternatively, this can be seen by using Proposition \ref{slc_at_m}.)
In fact, as the complex
$$\cdots \xrightarrow{x} E\xrightarrow{y} E \xrightarrow{x} E \xrightarrow{y} \cdots$$
is minimal, $E/(y)E$ and $E/(x)E$ are reduced $R$-modules, hence we obtain isomorphisms as $R$-modules: 
$$\Gstab_\m(R/(x))\cong E/(y)E \text{ and }\Gstab_\m(R/(y))\cong E/(x)E.$$
Even more explicitly, recall that we can describe $E$ as the $k$-vector space spanned by $x^iy^j$ for $i,j\leq -1$, and with a natural $R$-module structure (for $x^my^n\in R$ and $x^iy^j\in E$, $x^my^n\cdot x^iy^j=x^{m+i}y^{n+j}$ if $m+i\leq -1$ and $n+j\leq -1$, and $=0$ otherwise, see \cite[proof of Proposition 2.3]{Lyu93}).  We write this as $k\langle x^iy^j\rangle_{i,j\leq -1}$.  In this way, we can see that 
$$\Gstab_\m(R/(x))\cong k\langle x^iy^{-1}\rangle_{i\leq -1}\text{ and }\Gstab_\m(R/(y))\cong k\langle x^{-1}y^{j}\rangle_{j\leq -1},$$
both given the $R$-module structure described above.
\end{example}

\section{A bridge between stable and classical local cohomology}\label{bridge}
Before stating and proving our main connection between stable local cohomology and classical local cohomology, we present a lemma about the structure of minimal injective resolutions of a module $M$.  For an ideal $\a\subseteq R$ we define the {\em $\a$-depth} of a (not necessarily finitely generated) module $M$ to be $\depth(\a,M):=\inf\{i|H_\a^i(M)\not=0\}$.  By \cite{FI01}, $\depth(\a,M)$ coincides with $\inf\{j|\Ext_R^j(R/\a,M)\not=0\}$.  In particular, if $(R,\m)$ is a local ring, we say the {\em depth} of $M$ is $\depth(M):=\depth(\m,M)$.  We also define the {\em cohomological dimension} of $M$ at $\a$ to be $\cd(\a,M):=\sup\{i|H_\a^i(M)\not=0\}$.  For $\p\in \Spec(R)$, for convenience we set $\kappa(\p)=R_\p/\p R_\p$. Finally, we define the {\em $i$-th Bass number of $M$} with respect to $\p\in \Spec(R)$ as $\mu_R^i(\p,M)=\dim_{\kappa(\p)}\Ext_{R_\p}^i(\kappa(\p),M_\p)$.

\begin{lem}\label{mu_vanishes_below_grade}
Let $R$ be a commutative Noetherian ring of dimension $d$ and $M$ an $R$-module.  For any $\p\in \Spec(R)$, if $i<\depth(\p,M)$, then $\mu_R^i(\p,M)=0$.
\begin{proof}
Note that if $H_\p^j(M)_\p\not=0$, then $H_\p^j(M)\not=0$.  Since $H_{\p R_\p}^j(M_\p)\cong H_\p^j(M)_\p$, we obtain $\depth(\p,M)\leq \depth(\p R_\p, M_\p)$.  Recall also, see \cite[Theorem 9.1]{24hours}, that
$$\depth(\p R_\p, M_\p)=\inf\{j|\Ext_{R_\p}^j(\kappa(\p), M_\p)\not=0\}.$$
Therefore, for $i<\depth(\p,M)\leq \depth(\p R_\p, M_\p)$, we have that $\Ext_{R_\p}^i(\kappa(\p), M_\p)=0$, hence
$$\mu_R^i(\p,M)=\dim_{\kappa(\p)} \Ext_{R_\p}^i(\kappa(\p), M_\p)=0,$$
as desired.
\end{proof}
\end{lem}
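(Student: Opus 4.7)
The plan is to reduce to the local ring $R_\p$ and then invoke the standard characterization of depth in terms of vanishing of Ext. The Bass number $\mu_R^i(\p, M)$ is already defined as a $\kappa(\p)$-dimension of an Ext over $R_\p$, so the only content is to show $\Ext_{R_\p}^i(\kappa(\p), M_\p) = 0$ whenever $i < \depth(\p, M)$.

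First, I would compare $\depth(\p, M)$ with $\depth(\p R_\p, M_\p)$. Local cohomology commutes with localization: $H_{\p R_\p}^j(M_\p) \cong H_\p^j(M)_\p$. The contrapositive shows that if $H_\p^j(M) = 0$, then certainly its localization vanishes, so $H_{\p R_\p}^j(M_\p) = 0$. Hence the set of $j$ with $H_{\p R_\p}^j(M_\p) \neq 0$ is contained in the set of $j$ with $H_\p^j(M) \neq 0$, and taking infima yields
$$\depth(\p, M) \;\leq\; \depth(\p R_\p, M_\p).$$

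The second step is to apply the classical Ext-characterization of depth (the local version of the result of Foxby--Iyengar cited just before the lemma): for the local ring $R_\p$ with maximal ideal $\p R_\p$ and residue field $\kappa(\p)$,
$$\depth(\p R_\p, M_\p) \;=\; \inf\{\,j \mid \Ext_{R_\p}^j(\kappa(\p), M_\p) \neq 0\,\}.$$
Combining this with the previous inequality, any $i < \depth(\p, M)$ satisfies $i < \depth(\p R_\p, M_\p)$, which forces $\Ext_{R_\p}^i(\kappa(\p), M_\p) = 0$, and therefore $\mu_R^i(\p, M) = 0$.

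There is no real obstacle here: the argument is a two-line assembly of (i) flat base change for local cohomology at a prime and (ii) the Ext-depth formula in the possibly non-finitely-generated setting. The only point to be slightly careful about is that $M$ is not assumed finitely generated, so the Ext-depth equality must be cited in the generality of Foxby--Iyengar rather than the classical Rees lemma; once that reference is in hand, the proof is immediate.
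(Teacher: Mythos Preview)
Your proposal is correct and follows essentially the same argument as the paper: both compare $\depth(\p,M)$ with $\depth(\p R_\p,M_\p)$ via the isomorphism $H_{\p R_\p}^j(M_\p)\cong H_\p^j(M)_\p$, then invoke the Ext-characterization of depth over the local ring $R_\p$ to conclude the Bass number vanishes. The only cosmetic difference is that the paper cites \cite[Theorem~9.1]{24hours} for the Ext-depth equality rather than Foxby--Iyengar directly.
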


\begin{thm}\label{connect}
Let $(R,\m)$ be a Gorenstein local ring of Krull dimension $d$.  Suppose $M\not=0$ is an $R$-module where $\Gid_RM=\depth M$
and $\a\subset R$ is an ideal satisfying $c=\depth(\a,M)=\cd(\a,M)$.  Set $\Gid_R M=t$.  Then there exists a short exact sequence
$$0\to H_\a^c(M)\to \Gstab_\a(\cosyz^{c}M)\oplus E_R(H_\a^c(M))\to K\to 0,$$
where $\id_RK<\infty$.  Moreover, when $0\leq c\leq t-1$, we have $\id_RK=t-c-1$ and when $c=t$, the sequence splits and $K\cong E_R(\Gstab_\a(\cosyz^tM))$.
\end{thm}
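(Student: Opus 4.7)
The plan is to begin by choosing the complete injective resolution $M \to I \to U$ carefully, then exploit the ``agreement in high degrees'' to extract the sequence. Using Remark \ref{cir_unique_up_to_homotopy}, since $\cosyz^t M$ is Gorenstein injective (as $\Gid_R M = t$), I can construct $U$ so that $U^i = I^i$ for $i \geq t$ and the chain map $\nu : I \to U$ is the identity in those degrees. The crucial vanishing input is $\Gamma_\a(I^i) = 0$ for $i < c$: by Lemma \ref{mu_vanishes_below_grade}, together with the elementary inequality $\depth(\p, M) \geq \depth(\a, M) = c$ for every prime $\p \supseteq \a$, the Bass numbers $\mu^i(\p, M)$ with $\p \supseteq \a$ and $i < c$ all vanish, so Matlis' decomposition forces $\Gamma_\a(I^i) = 0$ in that range. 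Hence $\Gamma_\a(I)$ has cohomology concentrated in degree $c$, equal to $H := H_\a^c(M) = Z^c(\Gamma_\a(I))$, and since $\Gamma_\a$ preserves injective hulls, $\Gamma_\a(I)^{\geq c}$ is in fact the minimal injective resolution of $H$, with $E_R(H) = \Gamma_\a(I^c)$.

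Next I identify $G := \Gstab_\a(\cosyz^c M)$ with $Z^c(\Gamma_\a(U))$, Gorenstein injective by Lemma \ref{Gamma_preserves_acy}, and note that $\Gamma_\a(U)^{\geq c}$ is its minimal injective resolution. The chain map $\nu$ induces $\nu^c : \Gamma_\a(I^c) \to \Gamma_\a(U^c)$, which restricts on cycles to a well-defined map $\psi : H \to G$. Setting $\Phi := (\psi, \iota) : H \to G \oplus E_R(H)$ with $\iota$ the canonical inclusion, $\Phi$ is injective (its second component is), so defining $K := \coker \Phi$ produces the short exact sequence of the theorem. The structural observation driving the dimension count is that the two minimal injective resolutions of $H$ and $G$ agree in degrees $\geq t-c$, so their $(t-c)$-th cosyzygies coincide with $Z^t(\Gamma_\a(I)) = Z^t(\Gamma_\a(U)) = \Gstab_\a(\cosyz^t M)$, which is Gorenstein injective; in particular $\Gid_R H \leq t - c$.

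The main obstacle is to pin down $\id_R K = t-c-1$ exactly. The Auslander--Buchweitz--Holm theory, combined with $\Gid_R H \leq t-c$, guarantees that $H$ admits a Gorenstein injective preenvelope $0 \to H \to G'' \to K'' \to 0$ with $\id_R K'' \leq t-c-1$, and my strategy is to identify $\Phi$ with such a preenvelope up to injective summands. The snake lemma applied to $G \oplus E_R(H) \to E_R(H)$ yields $0 \to G \to K \to \cosyz^1 H \to 0$, and applied to $G \oplus E_R(H) \to G$ yields $0 \to E_R(H)/\iota(\ker \psi) \to K \to \coker \psi \to 0$. Neither sequence individually controls $\id_R K$, since $G$ typically has infinite injective dimension, so the plan is to intertwine them using the commutative diagram relating $\nu$ to the minimal injective resolutions of $H$ and $G$, and iterate by taking cosyzygies: the same pattern applies with $\cosyz^1 H$, $\cosyz^1 G$, and $t-c-1$ in place of $H$, $G$, and $t-c$. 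Induction on $t-c$, together with a dimension-shifting argument in the long exact sequence of $\Ext$, should then yield the exact value $\id_R K = t - c - 1$.

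Finally, the case $c = t$ is immediate: there $H = Z^t(\Gamma_\a(I)) = Z^t(\Gamma_\a(U)) = G$, the map $\psi$ is an isomorphism, so $\Phi$ is the standard diagonal $H \hookrightarrow H \oplus E_R(H)$, which admits the first projection as a retraction. The sequence therefore splits, and $K \cong E_R(H) = E_R(\Gstab_\a(\cosyz^t M))$, as claimed.
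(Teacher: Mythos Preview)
Your setup is close to the paper's: the vanishing $\Gamma_\a(I^i)=0$ for $i<c$, the identification $G=Z^c(\Gamma_\a(U))$, and the agreement $I^i\cong U^i$ in high degrees are all used there too. The construction of the map $\Phi=(\psi,\iota)$ is essentially the degree-$c$ piece of what the paper obtains by taking the total complex of the chain map $\Gamma_\a(I)\to\Gamma_\a(U)$ (restricted to degrees $c,\dots,t$), and your $c=t$ argument is fine.

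The genuine gap is the determination of $\id_R K$ when $c\le t-1$. You correctly flag this as ``the main obstacle,'' but none of the ingredients you propose actually close it. Your snake-lemma sequence $0\to G\to K\to \cosyz^1 H\to 0$ gives no bound on $\id_R K$ because $\id_R G=\infty$ in general, so neither finiteness nor an upper bound follows. Invoking Auslander--Buchweitz--Holm only tells you that \emph{some} Gorenstein injective preenvelope of $H$ has cokernel of injective dimension $\le t-c-1$; it does not identify that preenvelope with your $\Phi$, and different preenvelopes can have cokernels of different finite injective dimensions, so ``identifying $\Phi$ with such a preenvelope up to injective summands'' is not enough to pin down $\id_R K$ exactly. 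The induction you sketch is never set up.

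More seriously, you never use the hypothesis $\depth M=t$, and this is precisely what controls the \emph{lower} bound $\id_R K\ge t-c-1$. The paper proceeds quite differently here: the total complex of $\Gamma_\a(I^{\ge c})\to\Gamma_\a(U^{\ge c})$ yields an explicit injective resolution of $K$ with terms $\Gamma_\a(U^{i-1})\oplus\Gamma_\a(I^i)$, and after cancelling a copy of $\Gamma_\a(U^t)$ (shown to split off from $\Gamma_\a(I^t)$ via an injective cover argument) one gets a resolution of length $t-c-1$, giving the upper bound. For the lower bound, the paper computes $\Ext_R^{t-c-1}(R/\m,K)\ne 0$ directly: minimality of $I$ and $U$ forces $\Hom_R(R/\m,-)$ of their differentials to vanish, the hypothesis $\depth M=t$ forces $\mu^{t-1}(\m,M)=0$ so $\Hom_R(R/\m,\Gamma_\a(I^{t-1}))=0$, while $E(R/\m)$ appears in $U^{t-1}$ (by \cite[Theorem 10.3]{AM02}, assuming $\id_R M=\infty$) so $\Hom_R(R/\m,\Gamma_\a(U^{t-1}))\ne 0$. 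Without invoking $\depth M=t$ in this way, there is no mechanism in your argument to prevent $\id_R K<t-c-1$.
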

\begin{rem}
An $R$-module $M\not=0$ that is a cosyzygy of a MCM $R$-module satisfies $\depth(M)=\Gid_R(M)$, so cosyzygies of MCM modules are candidates for modules satisfying the conditions of the theorem.  Being a cosyzygy of a MCM $R$-module means that we can write $M=\cosyz^n M'$ for some MCM $R$-module $M'$, hence $\Ext_R^i(R/\m,M)\cong \Ext_R^{i+n}(R/\m,M')$.
We have, since $\Gid(M')=\depth(R)=\depth(M')$ by \cite[Theorem 6.2.15]{Chr00},
\begin{align*}
\depth(M)&=\inf\{i|\Ext_R^i(R/\m,M)\not=0\}\\
&=\inf\{i|\Ext_R^i(R/\m,M')\not=0\}-n\\
&=\depth(M')-n\\
&=\Gid(M')-n\\
&=\Gid(M).
\end{align*}
See the corollaries for specific cases where Theorem \ref{connect} applies.
\end{rem}
\begin{proof}[Proof of Theorem \ref{connect}]
Recall that by definition of $\depth(\a,M)$ and $\cd(\a,M)$, we have that $c=\depth(\a,M)=\cd(\a,M)$ if and only if $H_\a^i(M)=0$ for all $i\not=c$.  Also $c\leq t$, since if $c>t$ we would violate Corollary \ref{loc_coh_of_gor_inj_is_zero_in_high_degs}. 

Let $M\to I\to U$ be any minimal complete injective resolution (see Construction \ref{construction_cir} for an explicit construction).   Apply $\Gamma_\a(-)$ to the map of complexes $I\to U$ to obtain the map of complexes $\Gamma_\a(I)\to \Gamma_\a(U)$ (recall that $\Gamma_\a(U)$ remains exact by Lemma \ref{Gamma_preserves_acy}).

Fix $\ell<c=\depth(\a,M)$.  We claim that $\Gamma_\a(I^\ell)=0$.  It will be enough to show that $\mu_R^\ell(\p,M)=0$ for all $\p\supseteq \a$ (if $\p\not\supseteq \a$, then  $\Gamma_\a(E(R/\p))=0$). So let $\p$ be any prime containing $\a$.  By \cite[Proposition 2.10]{FI01}, 
$\depth_R(\a,M)=\inf\{\depth_{R_\q}M_\q|\q\supseteq \a\},$ so 
$$\ell<\depth(\a,M)\leq \depth_{R_\p}M_\p=\inf\{i|\Ext_{R_\p}^i(\kappa(\p), M_\p)\not=0\}.$$
Therefore 
$\mu_R^\ell(\p,M)=\dim_{\kappa(\p)}\Ext_{R_\p}^\ell(\kappa(\p), M_\p)=0,$
and so $\Gamma_\a(I^\ell)=0$.

By minimality of $I$, $t+1$ is the minimal integer such that $\ker(I^{t+1}\to I^{t+2})$ is reduced Gorenstein injective. To see this, note that \cite[Proposition 2.3]{EJ95res} gives that $Z^t(I)$ is Gorenstein injective, and therefore $Z^{t+1}(I)$ is reduced by \cite[Theorem 10.1.4]{EJ00} and the proof of \cite[Proposition 10.1.8]{EJ00}.  Thus for $i\geq t+1$, $I^i\cong U^i$; in particular $Z^{t+1}(I)\cong Z^{t+1}(U)$, and henceforth we identify these modules, setting $N:=Z^{t+1}(I)\cong Z^{t+1}(U)$.  Note that as $N$ is reduced and Gorenstein injective, so is $\Gamma_\a(N)$.  We therefore have the following diagram (using that $\Gamma_\a(I^i)=0$ for $i<c$ as shown above):

\[\xymatrix{
\cdots\ar[r] & \Gamma_\a(U^{c-1})\ar[r] & \Gamma_\a(U^c)\ar[r] & \Gamma_\a(U^{c+1})\ar[r] & \cdots \ar[r] & \Gamma_\a(U^{t}) \ar[r] & \Gamma_\a(U^{t+1})\ar[r] & \cdots\\
\cdots \ar[r] & 0\ar[u] \ar[r] & \Gamma_\a(I^c)\ar[r]\ar[u] & \Gamma_\a(I^{c+1})\ar[r]\ar[u] & \cdots \ar[r] & \Gamma_\a(I^{t}) \ar[r]\ar[u] & \Gamma_\a(I^{t+1})\ar[r]\ar[u]^{\cong} & \cdots
}\]

Since $\Gamma_\a(N)$ is reduced Gorenstein injective, $\Gamma_\a(U^t)\to \Gamma_\a(N)$ is an injective cover.  Also, \cite[Theorem 10.1.4]{EJ00} gives $\Gamma_\a(I^t)\to \Gamma_\a(N)$ is an injective precover.  Therefore by definition of injective precovers, there exist maps $\Gamma_\a(U^t)\to \Gamma_\a(I^t)$ and $\Gamma_\a(I^t)\to \Gamma_\a(U^t)$ and a commutative diagram
\[\xymatrix{\
\Gamma_\a(U^t)\ar[r]\ar[dr] & \Gamma_\a(I^t)\ar[r]\ar[d] &\Gamma_\a(U^t)\ar[dl]\\
&\Gamma_\a(N)&
}\]
where since $\Gamma_\a(U^t)\to \Gamma_\a(N)$ is an injective cover and the diagram commutes, we must have the composition $\Gamma_\a(U^t)\to \Gamma_\a(I^t)\to \Gamma_\a(U^t)$ is an isomorphism, hence the first horizontal map is an injection and the second is a surjection, such that the composition is isomorphic to the identity on $\Gamma_\a(U^t)$.  We have therefore shown that $\Gamma_\a(U^t)$ appears as a direct summand of $\Gamma_\a(I^t)$.

Note that $\cosyz^cM\to \Sigma^c(I^{\geq c})\to\Sigma^c U$ is a minimal complete injective resolution of $\cosyz^cM$. Then by definition, $\Gstab_\a(\cosyz^cM)=Z^0\Gamma_\a(\Sigma^cU)=Z^c\Gamma_\a(U)$, so we have the following diagram, with exact rows:

\[\xymatrix{
0\ar[r] & \Gstab_\a(\cosyz^{c}M) \ar[r] & \Gamma_\a(U^c)\ar[r] & \Gamma_\a(U^{c+1})\ar[r] & \cdots \ar[r] & \Gamma_\a(U^{t}) \ar[r] & \Gamma_\a(N)\ar[r] &  0 \\
0\ar[r] & H_\a^c(M)\ar[r]\ar[u] & \Gamma_\a(I^c)\ar[r]\ar[u] & \Gamma_\a(I^{c+1})\ar[r]\ar[u] & \cdots \ar[r] & \Gamma_\a(I^{t}) \ar[r]\ar[u] & \Gamma_\a(N)\ar[r]\ar[u]^{=}\ar[r] & 0
}\]
Totalization induces an exact sequence:

\begin{align}\label{exact_seq_of_slc_and_clc}
0\to H_\a^c(M)\xrightarrow{\del^{c-1}} \begin{matrix} \Gstab_\a(\cosyz^{c}M) \\ \oplus \\\Gamma_\a(I^c)\end{matrix}\xrightarrow{\del^c} \begin{matrix}\Gamma_\a(U^c) \\ \oplus \\ \Gamma_\a(I^{c+1})\end{matrix} \xrightarrow{\del^{c+1}} \cdots \xrightarrow{\del^{t}} \begin{matrix}\Gamma_\a(U^{t}) \\ \oplus \\ \Gamma_\a(N)\end{matrix} \xrightarrow{\del^{t+1}} \Gamma_\a(N)\to 0,
\end{align}
where $\del^i$ is defined in the obvious way \cite[Proposition 1.4.14]{EJ00}.  Note that the complex 
$$0\to \cdots \to 0 \to \Gamma_\a(N)\xrightarrow{\pm \id} \Gamma_\a(N)\to 0$$
appears as a subcomplex of the exact sequence (\ref{exact_seq_of_slc_and_clc}), so we can quotient out by it to obtain another exact sequence.  We consider the cases of $c=t$ and $0\leq c\leq t-1$ separately. 

First suppose $c=t$.  After quotienting the exact sequence (\ref{exact_seq_of_slc_and_clc}) out by $0\to \Gamma_\a(N)\xrightarrow{\pm \id} \Gamma_\a(N)\to 0$, we obtain a short exact sequence
$$0\to H_\a^t(M)\to \begin{matrix}\Gstab_\a(\cosyz^tM) \\ \oplus \\ \Gamma_\a(I^t) \end{matrix} \to \Gamma_\a(U^t) \to 0.$$
Since $\Gamma_\a(-)$ preserves essential injections, $\Gamma_\a(I^t)\cong E_R(H_\a^t(M))$ and $\Gamma_\a(U^t)\cong E_R(\Gstab_\a(\cosyz^t M))$, and further since $\Gamma_\a(I^t)\to \Gamma_\a(U^t)$ is a split surjection, we obtain the desired split short exact sequence when $c=t$:
$$0\to H_\a^t(M)\to \Gstab_\a(\cosyz^tM) \oplus E_R(H_\a^t(M)) \to E_R(\Gstab_\a(\cosyz^t M)) \to 0.$$

Next suppose that $0\leq c\leq t-1$.  Quotienting out the exact sequence (\ref{exact_seq_of_slc_and_clc}) by $0\to \Gamma_\a(N)\xrightarrow{\pm \id} \Gamma_\a(N)\to 0$, we obtain the following exact sequence (we abuse notation and use the same names for the maps):
\begin{align}\label{gorinjres_of_H_c_a}
0\to H_\a^c(M)\xrightarrow{\del^{c-1}} \begin{matrix} \Gstab_\a(\cosyz^{c}M) \\ \oplus \\\Gamma_\a(I^c)\end{matrix}\xrightarrow{\del^c} \begin{matrix}\Gamma_\a(U^c) \\ \oplus \\ \Gamma_\a(I^{c+1})\end{matrix} \xrightarrow{\del^{c+1}} \cdots \xrightarrow{\del^{t-1}} \begin{matrix}\Gamma_\a(U^{t-1}) \\ \oplus \\ \Gamma_\a(I^{t})\end{matrix}\xrightarrow{\del^{t}} \begin{matrix}\Gamma_\a(U^{t})\end{matrix}\to 0.
\end{align}
Set $K:=\coker(\del^{c-1})$. If $\id_R M<\infty$, then $\Gid_RM=\id_RM\leq d$, hence $U=0$, so 
$$0\to K\to \Gamma_\a(I^{c+1})\to \cdots \to \Gamma_\a(I^t)\to 0$$
is a minimal injective resolution (as $\Gamma_\a(-)$ preserves minimal injective resolutions), so $\id_RK=t-c-1$ as desired.  Henceforth we assume that $\id_R M=\infty$ (equivalently, $\pd_R M=\infty$).

Since the injective module $\Gamma_\a(U^t)$ is a summand of the injective module $\Gamma_\a(I^t)$, there is an injective module $J$ such that $\Gamma_\a(I^t)\cong \Gamma_\a(U^t)\oplus J$.  Set $\pi:\Gamma_\a(I^t)\to J$ as the canonical surjection.  This allows us to cancel off the appearance of $0\to \Gamma_\a(U^t)\xrightarrow{\cong} \Gamma_\a(U^t)\to 0$ in the exact sequence (\ref{gorinjres_of_H_c_a}) to obtain an injective resolution for $K$:
$$0\to K \xrightarrow{\del^{c}} \begin{matrix}\Gamma_\a(U^c) \\ \oplus \\ \Gamma_\a(I^{c+1})\end{matrix} \xrightarrow{\del^{c+1}} \cdots \xrightarrow{\del^{t-2}} \begin{matrix}\Gamma_\a(U^{t-2}) \\ \oplus \\ \Gamma_\a(I^{t-1})\end{matrix}\xrightarrow{\del^{t-1}} \begin{matrix}\Gamma_\a(U^{t-1}) \\ \oplus \\ J \end{matrix}\to 0,$$
hence $\id_RK\leq t-1-c$.  To show $\id_RK=t-1-c$, it is enough to show that $\Ext_R^{t-c-1}(R/\m,K)\not=0$.  Apply $\Hom_R(R/\m,-)$ to the injective resolution of $K$ to obtain:

\[\xymatrix{
\cdots \ar[r] & {\begin{matrix}\Hom_R(R/\m,\Gamma_\a(U^{t-2})) \\ \oplus \\ \Hom_R(R/\m,\Gamma_\a(I^{t-1}))\end{matrix}}\ar[r]^{(\del^{t-1})_*} & {\begin{matrix}\Hom_R(R/\m,\Gamma_\a(U^{t-1})) \\ \oplus \\ \Hom_R(R/\m,J)\end{matrix}}\ar[r] & 0\ar[r]& \cdots 
}\]
where if $\Gamma_\a(\del_U)$ and $\Gamma_\a(\del_I)$ are the differentials on $\Gamma_\a(U)$ and $\Gamma_\a(I)$, respectively, and if $\Gamma_\a(\rho):\Gamma_\a(I)\to \Gamma_\a(U)$ is the map induced by the minimal complete injective resolution, then
$$(\del^{t-1})_*=\left(\begin{matrix}(\Gamma_\a(\del_U^{t-2}))_* & (\Gamma_\a(\rho^{t-1}))_* \\ 0 & (\pi\circ\Gamma_\a(\del_I^{t-1}))_* \end{matrix}\right).$$
Since $\depth(M)=\Gid_RM=t>t-1$, Lemma \ref{mu_vanishes_below_grade} gives that $E(R/\m)$ does not appear in $I^{t-1}$, and hence also not in $\Gamma_\a(I^{t-1})$.  Therefore $\Hom_R(R/\m,\Gamma_\a(I^{t-1}))=0$, so $(\Gamma_\a(\rho^{t-1}))_*=0$.  Also, as $\Gamma_\a(I)$ and $\Gamma_\a(U)$ are both minimal complexes, $\Hom_R(R/\m,-)$ applied to either of their differentials becomes the zero map (see Remark \ref{min_inj_same_as_zero_maps}), hence $(\del^{t-1})_*=0$.

In order to show that $\Ext_R^{t-c-1}(R/\m,K)\not=0$, it is therefore enough to find a nonzero element in $\Hom_R(R/\m,\Gamma_\a(U^{t-1}))$.  Since we are in the case where $\pd_RM=\infty$, we have $E(R/\m)$ appears as a summand of $U^{t-1}$ \cite[Theorem 10.3]{AM02}, and therefore (since $\a\subseteq \m$) appears as a summand of $\Gamma_\a(U^{t-1})$, hence $\Hom_R(R/\m,\Gamma_\a(U^{t-1}))\not=0$ \cite[Theorem A.20]{24hours}.  Therefore $\Ext_R^{t-c-1}(R/\m,K)\not=0$, and so $\id_R K=t-1-c$.  Noting that $\Gamma_\a(I^c)\cong E_R(H_\a^c(M))$, we have the desired short exact sequence when $c\leq t-1$:

$$0\to H_\a^c(M)\to \Gstab_\a(\cosyz^cM)  \oplus E_R(H_\a^c(M)) \to K \to 0.$$
\end{proof}

We highlight a special case of the previous theorem:
\begin{cor}\label{cor_to_connect}
Let $(R,\m)$ be a Gorenstein local ring of Krull dimension $d$.  Suppose $M\not=0$ is a MCM $R$-module, such that $c=\depth(\a,M)=\cd(\a,M)$.  Then there exists a short exact sequence
$$0\to H_\a^c(M)\to \Gstab_\a(\cosyz^{c}M)\oplus E_R(H_\a^c(M))\to K\to 0,$$
where $\id_RK<\infty$.  Moreover, when $0\leq c\leq t-1$, we have $\id_RK=t-c-1$ and when $c=t$, the sequence splits and $K\cong E_R(\Gstab_\a(\cosyz^tM))$.
\end{cor}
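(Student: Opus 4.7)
The plan is to verify that a MCM module $M$ over a Gorenstein local ring satisfies all the hypotheses of Theorem \ref{connect}, at which point the conclusion follows as a direct specialization.

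First I would check that $\Gid_R M = \depth M$. Since $R$ is Gorenstein local of dimension $d$ and $M$ is MCM, we have $\depth M = d = \depth R$. Moreover, by the result cited in the remark after Theorem \ref{connect} (namely \cite[Theorem 6.2.15]{Chr00}), for a MCM $R$-module we have $\Gid_R M = \depth R$. So $\Gid_R M = d = \depth M$, and in the notation of Theorem \ref{connect}, $t = d$. The remaining hypothesis, $c = \depth(\a,M) = \cd(\a,M)$, is assumed outright.

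With the hypotheses verified, the plan is to invoke Theorem \ref{connect} directly to obtain the short exact sequence
\[
0 \to H_\a^c(M) \to \Gstab_\a(\cosyz^c M) \oplus E_R(H_\a^c(M)) \to K \to 0
\]
with $\id_R K < \infty$, together with the case analysis: $\id_R K = t - c - 1 = d - c - 1$ when $0 \leq c \leq t-1$, and a split sequence with $K \cong E_R(\Gstab_\a(\cosyz^t M))$ when $c = t = d$.

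There is essentially no obstacle here beyond confirming the hypothesis $\Gid_R M = \depth M$ for MCM modules, which is a standard consequence of \cite[Theorem 6.2.15]{Chr00} in the Gorenstein setting. The substance of the argument lives entirely in Theorem \ref{connect}, and this corollary simply records the MCM specialization, which is the case most commonly encountered in the applications motivating the paper (matrix factorizations, stable categories of MCM modules, etc.).
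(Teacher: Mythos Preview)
Your proposal is correct and matches the paper's approach: the corollary is stated immediately after Theorem \ref{connect} as a highlighted special case with no separate proof, and the Remark preceding it explains exactly the verification you give, namely that a MCM module $M$ satisfies $\Gid_R M = \depth(R) = \depth(M)$ via \cite[Theorem 6.2.15]{Chr00}.
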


\begin{example}
Let $(R,\m)$ be a local Gorenstein ring of finite Krull dimension and $M\not=0$ a MCM $R$-module.  If $\a$ is any ideal generated up to radical by a regular sequence, Theorem \ref{connect} applies.
\end{example}

\begin{rem}\label{finite_inj_dim_trivial}
Let $R$ be Gorenstein of Krull dimension $d$ and $N$ be any $R$-module.  Then $N\simeq 0$ in $\sGInj(R)$ if and only if $\id_R N<\infty$.
\end{rem}

\begin{cor}\label{slc_and_classical_coincide}
Let $R$ be a Gorenstein local ring of dimension $d$, $M\not=0$ a MCM $R$-module where $\Gid_RM=\depth M$ and $\a\subset R$ an ideal satisfying $c=\depth(\a,M)=\cd(\a,M)$. Then we have an isomorphism in $\sGInj(R)$,
$$H_\a^c(M)\simeq \Gstab_\a(\cosyz^cM).$$ 
\end{cor}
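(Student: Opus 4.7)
The plan is to deduce the corollary directly from Theorem \ref{connect} by observing that two of the three terms in the short exact sequence it produces vanish in the stable category $\sGInj(R)$.

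First, I would verify the hypotheses of Theorem \ref{connect}. Since $M$ is MCM over the Gorenstein local ring $R$, the remark following Theorem \ref{connect} (taken with $n = 0$, so that $M$ is its own $0$-th cosyzygy) combined with \cite[Theorem 6.2.15]{Chr00} yields $\depth M = \Gid_R M$; the condition $c = \depth(\a, M) = \cd(\a, M)$ is given by hypothesis. Theorem \ref{connect} therefore supplies the short exact sequence
\[
0 \to H_\a^c(M) \to \Gstab_\a(\cosyz^c M) \oplus E_R(H_\a^c(M)) \to K \to 0
\]
with $\id_R K < \infty$.

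The next step is to pass to $\sGInj(R)$. The injective module $E_R(H_\a^c(M))$ has the zero complex as its minimal complete injective resolution, so the middle term becomes $\Gstab_\a(\cosyz^c M)$ in $\sGInj(R)$; by Remark \ref{finite_inj_dim_trivial}, the right-hand term $K$ is likewise zero in $\sGInj(R)$ because $\id_R K < \infty$. Viewing the short exact sequence through the functor $Z^0 \CIR(-)\colon \ModR \to \sGInj(R)$ (which factors through the derived category), it induces a distinguished triangle in $\sGInj(R) \simeq \Kac(\Inj R)$. With two of the three vertices zero, the remaining morphism must yield $H_\a^c(M) \simeq \Gstab_\a(\cosyz^c M)$ in $\sGInj(R)$.

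The principal subtlety is that $H_\a^c(M)$ need not itself be a Gorenstein injective module, so the isomorphism $\simeq$ is to be interpreted via its image $Z^0 \CIR(H_\a^c(M))$ under the Gorenstein injective approximation functor. Verifying that a short exact sequence of arbitrary $R$-modules indeed induces a distinguished triangle in $\sGInj(R)$ is the main technical point; it follows because $Z^0 \CIR(-)$ factors through the derived category, where short exact sequences give rise to distinguished triangles, and these descend to triangles in $\sGInj(R)$.
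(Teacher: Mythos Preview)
Your proposal is correct and follows essentially the same approach as the paper: the paper's proof is the single line ``Apply Remark \ref{finite_inj_dim_trivial} to Theorem \ref{connect},'' and you have simply unpacked what that means---use the short exact sequence from Theorem \ref{connect}, observe that both $E_R(H_\a^c(M))$ and $K$ vanish in $\sGInj(R)$, and conclude. Your discussion of the subtlety (that $H_\a^c(M)$ may not itself be Gorenstein injective and that the isomorphism is to be read via the stabilization functor) is a worthwhile clarification that the paper leaves implicit; note also that Corollary \ref{ginj-approx} provides an alternative way to see this, since the short exact sequence is a Gorenstein injective approximation of $H_\a^c(M)$, which directly identifies its image in $\sGInj(R)$ with the middle term.
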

\begin{proof}
Apply Remark \ref{finite_inj_dim_trivial} to Theorem \ref{connect}.
\end{proof}

This also recovers a result of Zargar and Zakeri in the case of a Gorenstein ring:
\begin{cor}\cite{ZZ13}
Let $R$ be a Gorenstein local ring of dimension $d$, and $M$, $\a$, and $c$ be as in Theorem \ref{connect}.  Then
$$\Gid_R H_\a^c(M)=\Gid_RM-c.$$
\end{cor}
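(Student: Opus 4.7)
The plan is to derive the formula $\Gid_R H_\a^c(M) = t - c$ (where $t := \Gid_R M$) directly from the short exact sequence provided by Theorem \ref{connect}, namely
\[ 0 \to H_\a^c(M) \to G \to K \to 0, \qquad (\ast) \]
with $G := \Gstab_\a(\cosyz^c M) \oplus E_R(H_\a^c(M))$ Gorenstein injective and $K$ of finite injective dimension; more precisely, either $c = t$ and the sequence splits with $K$ injective, or $0 \le c \le t - 1$ and $\id_R K = t - c - 1$.

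For the split case $c = t$, the module $H_\a^c(M)$ appears as a direct summand of the Gorenstein injective $G$, hence is itself Gorenstein injective; since $c = \depth(\a, M)$ forces $H_\a^c(M) \ne 0$, this yields $\Gid_R H_\a^c(M) = 0 = t - c$.

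For the non-split case $c < t$, I would apply the depth lemma for Gorenstein injective dimension recorded in \cite{Chr00}: for any short exact sequence $0 \to A \to B \to C \to 0$ in which two of the three modules have finite Gorenstein injective dimension, so does the third, with
\[ \Gid_R A \le \max\{\Gid_R B,\ \Gid_R C + 1\}, \]
and equality whenever $\Gid_R B \ne \Gid_R C + 1$. Applied to $(\ast)$: $\Gid_R G = 0$, while $\Gid_R K = \id_R K = t - c - 1$ (finite injective dimension forces equality with Gorenstein injective dimension). Since $c < t$ ensures $0 \ne t - c$, the depth lemma delivers the equality $\Gid_R H_\a^c(M) = t - c$.

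The main step that needs care is the equality clause of the depth lemma. If it is not immediately available in the desired form, a self-contained substitute is the following. Apply $\Hom_R(E, -)$ to $(\ast)$ with $E$ an arbitrary injective module: because $G$ is Gorenstein injective, $\Ext_R^i(E, G) = 0$ for $i \ge 1$, and the long exact sequence collapses to $\Ext_R^i(E, H_\a^c(M)) \cong \Ext_R^{i-1}(E, K)$ for $i \ge 2$. Combined with the Christensen--Frankild--Holm characterization $\Gid_R N = \sup\{i : \Ext_R^i(E, N) \ne 0 \text{ for some injective } E\}$ (valid when $\Gid_R N < \infty$), this yields the formula for $c \le t - 2$. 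The boundary case $c = t - 1$ (where $\id_R K = 0$) then reduces to verifying that $(\ast)$ does not split, which can be read off from the construction in the proof of Theorem \ref{connect}: $E_R(R/\m)$ appears as an injective summand of $K$ but is not a summand of $G$, since $\Gstab_\a(\cosyz^{t-1}M)$ is reduced by the minimality of its defining complete injective resolution.
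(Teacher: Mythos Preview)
Your overall strategy matches the paper's: the paper's proof is literally ``This follows immediately from Theorem~\ref{connect},'' and you are supplying the homological bookkeeping that makes that sentence honest. The short exact sequence $(\ast)$, the split case $c=t$, and the reduction to a depth-lemma computation are all the right moves.

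There is, however, a genuine slip in the equality clause you quote for the depth lemma. For a short exact sequence $0\to A\to B\to C\to 0$ the inequality
\[
\Gid_R A \le \max\{\Gid_R B,\ \Gid_R C+1\}
\]
can be \emph{strict} precisely when $\Gid_R B=\Gid_R C$, not when $\Gid_R B=\Gid_R C+1$. (Take $B$ and $C$ injective and the sequence split: then $\Gid_R A=0<1$, yet $\Gid_R B=0\ne 1=\Gid_R C+1$.) In your application $\Gid_R G=0$ and $\Gid_R K=t-c-1$, so the depth lemma alone gives the desired equality only when $t-c-1\ge 1$, i.e.\ $c\le t-2$; the case $c=t-1$ is exactly where $\Gid_R G=\Gid_R K=0$ and the lemma is silent. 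Thus your sentence ``Since $c<t$ ensures $0\ne t-c$, the depth lemma delivers the equality'' overreaches.

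Fortunately your ``self-contained substitute'' already repairs this: the $\Ext$ shifting argument handles $c\le t-2$, and for $c=t-1$ your non-splitting argument is correct. Indeed, from the proof of Theorem~\ref{connect} one sees $E(R/\m)$ occurs in $K$ (either via $\Gamma_\a(U^{t-1})$ when $\id_R M=\infty$, or via $\Gamma_\a(I^t)$ when $\id_R M<\infty$, since $\mu_R^t(\m,M)\ne 0$), while $E(R/\m)$ does not occur in $G=\Gstab_\a(\cosyz^{t-1}M)\oplus\Gamma_\a(I^{t-1})$ because the first summand is reduced and $\mu_R^{t-1}(\m,M)=0$. If $(\ast)$ split, $K$ would be a summand of $G$, a contradiction; and non-splitting with $K$ injective forces $\Ext_R^1(K,H_\a^{t-1}(M))\ne 0$, so $H_\a^{t-1}(M)$ is not Gorenstein injective. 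Just correct the depth-lemma statement (or drop it and lead with the $\Ext$ argument) and the proof stands.
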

\begin{proof}
This follows immediately from Theorem \ref{connect}.
\end{proof}

Recall that a MCM approximation of a finitely generated module $N$ is a short exact sequence $0\to I\to M\to N\to 0$, where $\id_RI<\infty$ and $M$ is MCM.  Often we just refer to $M$ as the MCM approximation of $N$.

Dually, for an artinian module $N$, a short exact sequence of the form $0\to N\to G\to P\to 0$, where $G$ is Gorenstein injective and $\pd_RP<\infty$ is called a Gorenstein injective approximation of $N$ \cite[section 7]{Kra05}.  Therefore, in light of Theorem \ref{connect}, we have:

\begin{cor}\label{ginj-approx}
The short exact sequence given in Theorem \ref{connect} is a Gorenstein injective approximation of $H_\a^{c}(M)$.
\end{cor}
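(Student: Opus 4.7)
The plan is to verify that the short exact sequence produced by Theorem \ref{connect},
$$0\to H_\a^c(M)\to \Gstab_\a(\cosyz^{c}M)\oplus E_R(H_\a^c(M))\to K\to 0,$$
satisfies the two defining properties of a Gorenstein injective approximation of $H_\a^c(M)$: that the middle term is Gorenstein injective, and that the cokernel $K$ has finite projective dimension.

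First I would check that the middle term is Gorenstein injective. The summand $\Gstab_\a(\cosyz^{c}M)$ is Gorenstein injective essentially by construction: Definition \ref{stableloccohdefn} presents it as $Z^0$ of $\Gamma_\a$ applied to a complete injective resolution of $\cosyz^cM$, and Lemma \ref{Gamma_preserves_acy} together with Remark \ref{gor_acy_totacy} tell us this is $Z^0$ of a totally acyclic complex of injectives, hence Gorenstein injective. The other summand $E_R(H_\a^c(M))$ is injective and so trivially Gorenstein injective, and the class of Gorenstein injective modules is closed under direct sums.

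For the second condition, Theorem \ref{connect} already yields $\id_RK<\infty$ in both subcases (namely $\id_RK=t-c-1$ when $c\leq t-1$, and $K\cong E_R(\Gstab_\a(\cosyz^{t}M))$ being injective when $c=t$). Over a Gorenstein ring of finite Krull dimension, the Iwanaga--Gorenstein property ensures that any module of finite injective dimension also has finite projective dimension (see, e.g., \cite[Theorem 9.1.10]{EJ00}), so $\pd_RK<\infty$ follows. There is no substantive obstacle here; the heavy lifting has already been carried out in Theorem \ref{connect}, and this corollary simply repackages that output in the language of Gorenstein injective approximations.
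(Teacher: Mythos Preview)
Your proposal is correct and follows the same approach as the paper, which gives no explicit proof but treats the corollary as immediate from Theorem \ref{connect} together with the preceding definition of a Gorenstein injective approximation. You have simply made explicit the two verifications (middle term Gorenstein injective, $\pd_RK<\infty$ via $\id_RK<\infty$ over a Gorenstein ring) that the paper leaves to the reader.
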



\newcommand{\etalchar}[1]{$^{#1}$}

\end{document}